% !TeX spellcheck = en_US 
\documentclass[12pt,reqno]{amsart}

\usepackage{amsmath, amsfonts, amssymb, amsthm, mathrsfs}

\usepackage[utf8,utf8x]{inputenc}
\usepackage[pagebackref=true]{hyperref}
\hypersetup{%
	pdftitle = {},
	pdfkeywords = {},
	pdfsubject = {},
	pdfauthor = {},
	colorlinks = true,
	linktoc = page,
%	citecolor = magenta,
	citecolor = purple,
	linkcolor = blue 
}

\usepackage{xcolor}
\usepackage{caption}
\usepackage{tikz}
\usetikzlibrary{calc}
\usetikzlibrary{intersections}
\usetikzlibrary{cd}

\tikzcdset{scale cd/.style={every label/.append style={scale=#1},
		cells={nodes={scale=#1}}}}

\usepackage{enumerate}
\usepackage{mfirstuc}

\usepackage{todonotes}

\newcommand{\CC }{\mathbb{C}}
\newcommand{\RR }{\mathbb{R}}

\newcommand{\ZZ }{\mathbb{Z}}

\newcommand{\Ac }{\mathcal{A}}

\newcommand{\Ec }{\mathcal{E}}

\newcommand{\Vc }{\mathcal{V}}

\newcommand{\Lc }{\mathcal{L}}
\newcommand{\Nc }{\mathcal{N}}
\newcommand{\OM }{\mathcal{M}}

\newcommand{\Qc }{\mathcal{Q}}

\newcommand{\Tc }{\mathcal{T}}
\newcommand{\Sc }{\mathcal{S}}
\newcommand{\Hc }{\mathcal{H}}

\newcommand{\Hs }{\mathfrak{H}}

\newcommand{\isom }{\simeq}

% topological spaces

%\DeclareMathOperator{\ker }{ker}

\DeclareMathOperator{\sgn}{sgn}
\DeclareMathOperator{\mtc}{\underline{M}}

% categories

\DeclareMathOperator{\Zero}{\mathbf{0}}

\newcommand\wt[1]{\widetilde{#1}}
\newcommand\wh[1]{\widehat{#1}}

\DeclareMathOperator{\rk}{rk}

%\usepackage{mathabx}

%% special symbols for linear extension
% On the tope poset
\DeclareMathOperator{\leT}{\dashv}
%\DeclareMathOperator{\leT}{\sqsubset}

%\DeclareMathOperator{\leTneq}{\sqsubsetneq}

% on the dual covector cells

\DeclareMathOperator{\conv}{conv}
\DeclareMathOperator{\dist}{dist}

% Categories:
\DeclareMathOperator{\Pos}{\mathtt{Pos}}

\DeclareMathOperator{\Top}{\mathtt{Top}}

\def\dual{\vee}
\DeclareMathOperator{\aff}{aff}

%%%%%%%%%%%%%%%%%%%%%%%%%%%%%%%%%%%%%%%%%
%%%%% theorem related 
%%%%%%%%%%%%%%%%%%%%%%%%%%%%%%%%%%%%%%%%%

\numberwithin{equation}{section}

\theoremstyle{plain}
\newtheorem{lemma}[equation]{Lemma}
\newtheorem{theorem}[equation]{Theorem}

\newtheorem{corollary}[equation]{Corollary}
\newtheorem{proposition}[equation]{Proposition}
\theoremstyle{definition}
\newtheorem{definition}[equation]{Definition}
\newtheorem{remark}[equation]{Remark}

\newtheorem{example}[equation]{Example}

\newtheorem{question}[equation]{Question}

%%%%%%%%%%%%%%%%%%%%%%%%%%%%%%%%%%%%%%%%%
%%%%% Edit
%%%%%%%%%%%%%%%%%%%%%%%%%%%%%%%%%%%%%%%%%

%%%%%%%%%%%%%%%%%%%%%%%%%%%%%%%%%%%%%%%%%%%%%%%%%%%%%%%%%%%%%%%%%%%%%%%%%%%%%

\title[Modular flats and poset quasi-fibrations]
{Modular flats of oriented matroids and poset quasi-fibrations}

%%%%%%%%%%%%%%%%%%%%%%%%%%%%%%%%%%%%%%%%%%%%%%%%%%%%%%%%%%%%%%%%%%%%%%%%%%%%%

\author{Paul M\"ucksch}
\address{Paul M\"ucksch,
	Kyushu University,
	Department of Mathematics,
	744, Motooka, Nishi-ku,
	Fukuoka 819-0395, Japan}
\email{paul.muecksch+uni@gmail.com}

%%%%%%%%%%%%%%%%%%%%%%%%%%%%%%%%%%%%%%%%%%%%%%%%%%%%%%%%%%%%%%%%%%%%

\begin{document}
	
\begin{abstract}
	We study the combinatorics of modular flats of oriented matroids and
	the topological consequences for their Salvetti complexes.
	We show that the natural map to the localized Salvetti complex 
	at a modular flat of corank one is what we call a poset quasi-fibration
	--  a notion derived from Quillen's fundamental Theorem B from algebraic $K$-theory.
	As a direct consequence, the Salvetti complex of an oriented matroid
	whose geometric lattice is supersolvable is a $K(\pi,1)$-space
	-- a generalization of the classical result for supersolvable hyperplane arrangements
	due to Falk, Randell and Terao. 
	Furthermore, the fundamental group of the Salvetti complex of a supersolvable oriented matroid
	is an iterated semidirect product of finitely generated free groups -- analogous to the realizable case.
	
	Our main tools are discrete Morse theory,
	the shellability of certain subcomplexes of the covector complex of an
	oriented matroid, a nice combinatorial decomposition of poset fibers of the localization map,
	and an isomorphism of covector posets associated
	to modular elements.
	
	We provide a simple construction of supersolvable oriented matroids.
	This gives many non-realizable supersolvable oriented matroids
	and by our main result aspherical CW-complexes.
\end{abstract}

%%%%%%%%%%%%%%%%%%%%%%%%%%%%%%%%%%%%%%%%%%%%%%%%%%%%%%%%%%%%%%%%%%%%%%%%%%%%%%%%%%%%%%%

\keywords{Oriented matroid, supersolvable lattice, Salvetti complex, poset quasi-fibration, discrete Morse theory}
\subjclass[2010]{Primary: 52C40, 55P20; Secondary: 55R10, 57Q70}

\maketitle

%%%%%%%%%%%%%%%%%%%%%%%%%%%%%%%%%%%%%%%%%%%%%%%%%%%%%%%%%%%%%%%%%%%%%%%%%%%%%%%%%%%%%%%

\section{Introduction}
\label{sec:Introduction}

Complements of complex hyperplane arrangements provide a rich source of interesting topological spaces.
One of the most intricate and fascinating problems is to decide when arrangement complements are 
Eilenberg-MacLane $K(\pi,1)$-spaces where $\pi$ is the fundamental group of the space. 
By definition, this is the case if their universal covering space is contractible.
Synonyms are aspherical space or classifying space for the group $\pi$. 
They play an important role in algebraic topology e.g.\ as their cohomology coincides with
the group cohomology of $\pi$.
Moreover, if a $K(\pi,1)$-space is homotopy equivalent to a finite CW-complex then the group $\pi$ is torsion free.
In the following, we call complex arrangements whose complements are aspherical simply $K(\pi,1)$ or aspherical arrangements.

One of the most prominent positive results regarding $K(\pi,1)$-arran\-gements
is Deligne's theorem \cite{Deligne1973_SimplKpi1}, stating that complexified real simplicial arrangements
are aspherical.
Subsequently, Salvetti \cite{Salvetti1987_SalCpx} introduced a regular cell complex modeling the homotopy type of the
complement of a complexified real arrangement. 
Gel'fand and Rybnikov \cite{GelfandRybnikov1990_AlgTopInvOfOMs} realized that 
the construction of the Salvetti complex only depends on the oriented matroid
associated to the real arrangement (see Section~\ref{ssec:SalvettiCpx}).
This in turn led to a generalization of Deligne's result to oriented matroids in general, i.e.
the Salvetti complex of an oriented matroid whose covector complex is a simplicial cell decomposition of the sphere is indeed aspherical,
independently established by Cordovil \cite{Cordovil1994_SimplOMsKpi1} and Salvetti \cite{Salvetti1993_SimplOMsKpi1}.

The combinatorial notions of a modular flat in a geometric lattice and supersolvability 
were introduced by Stanley \cite{Stanley1971_ModularElts,Stan72_SSLat} (see Section~\ref{ssec:ModularSS}).
Regarding the $K(\pi,1)$ problem for hyperplane arrangements,
a further corner stone in the theory is due to Falk and Randell \cite{FalkRandell1985_FiberTypeArrs}
and Terao \cite{Terao1986_ModEltsFibrations}, see also \cite[Ch.~5]{OrTer92_Arr}. Thanks to their theorem, 
complex arrangements with supersolvable geometric lattices are aspherical.
Their proof is entirely geometric and does not use any complexes modeling the homotopy type of the complement.
More precisely, they showed that the natural projection of the ambient space of the arrangement
to the quotient space by a modular intersection of corank one, restricted to the complement space 
is a fiber bundle map with fiber a punctured complex plane. 
The associated long exact sequence of homotopy groups then yields their result.
As a special case for real arrangements one obtains that supersolvability of their intersection lattices
implies that their Salvetti complexes are concrete instances of finite $K(\pi,1)$-complexes.
A nice generalization of Falk, Randell and Terao's result to arrangements on connected abelian Lie groups 
is obtained in the very recent work of Bibby and Delucchi \cite{BibbyDelucchi2022_SSPosets}. 

The natural question appears whether the ``real'' version of Falk, Randell and Terao's theorem
extends to all oriented matroids as does Deligne's theorem.

It is exactly the aim of this paper to provide a positive answer to this question and a novel combinatorial
proof of the ``supersolvability theorem'' for real arrangements.
More precisely, we show that the natural projection map from the Salvetti complex to its localization
at a modular corank one flat in the geometric lattice of an oriented matroid (see Section~\ref{ssec:SalvettiCpx})
is what we call a \emph{poset quasi-fibration}. 
The poset fibers of this localization map are homotopy equivalent to the affine Salvetti complex
of an oriented matroid of rank two (Theorem~\ref{thm:MainQuasifibration}).
The notion of a poset quasi-fibration is directly derived from Quillen's fundamental
Theorem B from algebraic $K$-theory \cite{Quillen1973_KTheory1} (see Section~\ref{sec:QullenThmB}).

Since every poset quasi-fibration comes with a long exact homotopy sequence,
as an application we obtain the following theorem which indeed extends
Falk, Randell and Terao's result to oriented matroids.
\begin{theorem}
	\label{thm:SSOMKpi1}
	The Salvetti complex of a supersolvable oriented matroid is aspherical.
\end{theorem}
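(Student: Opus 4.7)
The plan is to prove Theorem \ref{thm:SSOMKpi1} by induction on the rank $n$ of the supersolvable oriented matroid $\Oc$. The base cases (rank at most $2$) are immediate: the Salvetti complex is at most $2$-dimensional and deformation retracts onto a graph, so it is a $K(\pi,1)$ for a finitely generated free group.

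For the inductive step, supersolvability furnishes a maximal chain of modular flats
\[
\hat{0} = X_0 \lessdot X_1 \lessdot \cdots \lessdot X_n = \hat{1}
\]
in the geometric lattice of $\Oc$, so that $X_{n-1}$ is a modular flat of corank one. I would first observe that the localization $\Oc^{X_{n-1}}$ is again supersolvable of rank $n-1$: its geometric lattice is the interval $[\hat{0}, X_{n-1}]$, and each $X_i$ with $i \leq n-1$ remains modular inside this interval, a standard lattice-theoretic fact going back to Stanley (presumably collected in Section \ref{ssec:ModularSS}). By the inductive hypothesis, the Salvetti complex of $\Oc^{X_{n-1}}$ is aspherical.

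Now I would apply Theorem \ref{thm:MainQuasifibration} to the natural localization map $p$ from the Salvetti complex of $\Oc$ to the Salvetti complex of $\Oc^{X_{n-1}}$: this map is a poset quasi-fibration whose poset fibers $F$ are homotopy equivalent to the affine Salvetti complex of a rank-$2$ oriented matroid, and hence --- again by the base case --- themselves aspherical. Attaching the long exact sequence of homotopy groups afforded by a quasi-fibration, for each $i \geq 2$ we obtain the exact segment
\[
\pi_i(F) \longrightarrow \pi_i\bigl(\mathrm{total}\bigr) \longrightarrow \pi_i\bigl(\mathrm{base}\bigr),
\]
in which both outer terms vanish by the inductive hypothesis and the asphericity of $F$. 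Hence $\pi_i$ of the Salvetti complex of $\Oc$ vanishes for every $i \geq 2$, which is precisely asphericity.

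The main obstacle lies not in this short homotopy-theoretic argument but in the two auxiliary inputs it rests on. The combinatorial one is that supersolvability descends to the localization at a modular corank-one element, which reduces to checking that a chain of modular flats of $L(\Oc)$ lying below $X_{n-1}$ is still a chain of modular flats in $[\hat{0}, X_{n-1}]$. The topological one is the asphericity of the fiber: one needs the affine Salvetti complex of any rank-$2$ oriented matroid to be a $K(\pi,1)$, which should follow from a direct deformation retraction onto a wedge of circles, paralleling the classical fact that the complement of finitely many points in $\CC$ is a $K(F_n,1)$. Both are presumably treated in earlier sections, leaving the induction above as the skeleton of the proof.
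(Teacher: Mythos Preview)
Your proposal is correct and follows essentially the same route as the paper: induction on rank, using Theorem~\ref{thm:MainQuasifibration} to exhibit the localization map at the modular corank-one flat as a poset quasi-fibration with aspherical (graph-like) fiber, and then reading off asphericity from the long exact homotopy sequence (this is exactly Corollary~\ref{coro:ModFlatCork1Kpi1} followed by Corollary~\ref{coro:SSOMsKpi1}). One small notational caution: in this paper $\OM_X$ denotes the localization and $\OM/X$ (or $\Lc^X$) the contraction, so your $\Oc^{X_{n-1}}$ should be written $\OM_{X_{n-1}}$.
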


We also provide a simple argument that the Salvetti complex of each
localization of an oriented matroid with aspherical Salvetti complex is 
aspherical as well (Lemma~\ref{lem:Kpi1LocOMS}).
In the case of hyperplane arrangements this useful necessary condition for asphericity
is due to Oka , cf.\ \cite[Lem.~1.1]{Paris1993_DeligneCpx}.
Furthermore, we show that the fundamental group of the Salvetti complex of a supersolvable
oriented matroid is an iterated semidirect product of finitely generated free groups (see  Section~\ref{sec:FundGrps}) 
which is completely analogous to the realizable case.

A main ingredient in our study is discrete Morse theory which was established by
Forman \cite{Forman1998_DiscrMorse} (see Section \ref{ssec:DiscMorseTheory}). We use its reformulation in terms of acyclic matchings
on face posets of cell complexes due to Chari \cite{Chari2000_DiscreteMorseCombDecomp},
which is now standard for applications to combinatorial topology, see also Kozlov's book \cite{Kozlov2008_CombAlgTop}.
The acyclic matchings which are relevant to us are derived from the shellability
of the covector complex of an oriented matroid (see Section~\ref{ssec:Shellability}), 
a central result in the combinatorial topology of oriented matroids.

In the recent breakthrough work by Paolini and Salvetti \cite{PaoliniSalvetti2021_KpiAffArtin}
which settled the $K(\pi,1)$ conjecture for affine Artin groups, discrete Morse theory
in combination with certain shellability results also played an important role.

In a separate erratum \cite{DelucchiMuecksch2022_Erraturm-ShellableSalvetti} 
we comment on results stated by Delucchi in \cite{Delucchi2008_ShellableSalvetti} which are unfortunately not correct.
Resolving these issues would lead to a generalization of our results, similar to the more general
fiber bundles for hyperplane arrangements described in the work of Falk and Proudfoot \cite{FalkProudfoot2002_BundlesOfArrs}.

\bigskip
This article is organized as follows.
In Section~\ref{sec:Preliminaries} we recall all the required notions from combinatorial
topology and oriented matroid theory.
In Section~\ref{sec:CombModularOMS} we further elaborate on the special properties
of modular flats of oriented matroids.
Section~\ref{sec:StratSalvettiFibers} describes a useful stratification of the poset fibers of the
localization map between Salvetti complexes.
Section~\ref{sec:QullenThmB} introduces the special class of order
preserving maps which play a central role in our study.
In Section~\ref{sec:MatchingsAndPosetQFibrations} we derive certain acyclic matchings which are then applied
to prove our main result: Theorem~\ref{thm:MainQuasifibration}.
In the following Section~\ref{sec:FundGrps} we describe the structure of the fundamental group of
the Salvetti complex of a supersolvable oriented matroid.
To conclude, in Section~\ref{sec:ApplicationExamples} we provide a simple construction
which yields many non-realizable supersolvable oriented matroids.

%%%%%%%%%%%%%%%%%%%%%%%%%%%%%%%%%%%%%%%%%%%%%%%%%%%%%%%%%%%%%%%%%%%%%%%%%%%%%%%%%%%%%%%%%%%%%%%%%%%%
%%%%%%%%%%%%%%%%%%%%%%%%%%%%%%%%%%%%%%%%%%%%%%%%%%%%%%%%%%%%%%%%%%%%%%%%%%%%%%%%%%%%%%%%%%%%%%%%%%%%

\section{Preliminaries}
\label{sec:Preliminaries}

In this preliminary section we review all the objects and notions required 
for our study.

\subsection{Posets and regular CW-complexes}
\label{ssec:PosetTopology}

We introduce some notation and collect basic informations on the combinatorial topology of posets and regular CW-complexes.
Throughout this paper, all partially ordered sets (or posets for short) and all complexes are assumed to be finite.
A concise reference for us is \cite[App.~4.7]{BLSWZ1999_OrientedMatroids}.

We start by recalling some basic poset terminology.
Let $P = (P,\leq)$ be a poset. Its \emph{dual} or \emph{opposite} poset $P^\dual$
has the same ground set as $P$ but its order relation $\leq^\dual$ 
is reversed: $x \leq^\dual y :\iff y \leq x$.

To emphasize the order relation of a particular poset $P$ we occasionally use the notation $\leq_P$.

We denote by $\lessdot$ the \emph{cover relations} of $P$.
That is $x \lessdot y$ if and only if $x < y$, and for all $z \in P$:
$x < z \leq y$ implies $z=y$.

A subset $I \subseteq P$ is called an \emph{order ideal} if $y \in I$ and $x \in P$ with
$x \leq y$ implies $x \in I$.
Dually, $F \subseteq P$ is called an \emph{order filter} if $F^\dual$ is an order ideal in $P^\dual$.
We write $P_{\leq y} = \{x \in P \mid x \leq y\}$ for the \emph{principal} order ideal generated by $y \in P$
and similarly $P_{\geq x} = \{ y \in P \mid x \leq y\}$ for the principal order filter
generated by $x \in P$.

A map $f:P \to Q$ between two posets is \emph{order preserving} or a \emph{poset map}
if for all $x \leq_P y$ $(x,y \in P)$ we have $f(x) \leq_Q f(y)$.
If $q \in Q$ then we write $(f\downarrow q) := f^{-1}(Q_{\leq q})$ for the \emph{poset fibers} of $f$.

A \emph{linear extension} $\dashv$ of $P$ is a total order on the ground set $P$ such that 
for all $x,y \in P$: $x \leq y$ implies $x \dashv y$. Linear extensions of finite posets always exits.
Given an order ideal $I \subseteq P$, a linear extension $\dashv_I$ on $I$ 
can always be extended to a linear extension $\dashv$ on $P$ with
$x \dashv y$ if and only if $x \dashv_I y$ for all $x,y \in I$.
Moreover, a linear extension on $I$ can always be extended to $P$ in such a way
that all the elements of $I$ come first. 
An analogue statement holds for order filters.

\bigskip

We assume familiarity with the notion of \emph{CW-complexes}, also called \emph{cell complexes}, e.g.\ see \cite{Hatcher2002_AT}.
Important for our context are the \emph{regular} cell complexes
since they are completely determined by their \emph{face poset} of closed cells ordered by inclusion.
%A finite regular cell complex is determined up to homeomorphism by its face poset.
Let $\Sigma$ be a regular cell complex and $\sigma,\tau \in \Sigma$.
If $\tau \subseteq \overline{\sigma}$ then $\tau$ is called a \emph{face}
of $\sigma$ and we simply write $\tau \leq \sigma$.
In what follows, we thus regard a regular cell complex $\Sigma$ itself as a poset.
Cells of dimension zero are referred to as \emph{vertices}.

Conversely, to every poset $P$ is associated its \emph{order complex}
$\Delta(P)$. 
It is the simplicial complex with 
$n$-simplices given by all chains of length $n$ in $P$, i.e.
\[
	\Delta(P)_n := \{\{x_0,\ldots,x_n\} \mid x_i \in P\text{ with }x_0 < x_1 < \ldots < x_n\}.
\]

For a simplicial complex $\Delta$ we denote its topological realization by $|\Delta|$.
The composition of forming the order complex and taking its realization yields the 
\emph{simplicial realization} functor 
\[
	|\Delta(-)|:\Pos \to \Top
\]
from the category $\Pos$ of posets and order preserving maps
to the category $\Top$ of topological spaces and continuous maps. 
For a regular cell complex $\Sigma$ the image of its face poset under this functor
is naturally homeomorphic to $\Sigma$: $|\Delta(\Sigma)| \cong \Sigma$.

We call an order preserving map $f:P \to Q$ a homotopy equivalence
provided $|\Delta(f)|$ is a homotopy equivalence.

Maps between finite regular cell complexes are given as order preserving maps
between their face posets.	
A poset map between the face posets of
regular cell complexes which is a homotopy equivalence in the above sense
is indeed a homotopy equivalence of their defining topological spaces.

A regular cell complex $\Sigma$ is called \emph{pure} if
all maximal cells have the same dimension.

A subset $\Gamma \subseteq \Sigma$ is called a (closed) subcomplex
if with a cell $\sigma \in \Gamma$ all of its faces also belong to $\Gamma$,
i.e.\ $\Gamma$ is an order ideal of $\Sigma$.
For a subset of cells $S \subseteq \Sigma$
we denote by $\Sigma(S) := \{\tau \in \Sigma \mid \tau \leq \sigma$ for a $\sigma \in S\}$ the subcomplex
of all faces of cells in $S$.
In poset terminology, $\Sigma(S) = \bigcup_{\sigma \in S} \Sigma_{\leq \sigma} $ is the order ideal generated by $S$.
For a cell $\sigma \in \Sigma$ we denote by $\delta\sigma$ the subcomplex consisting of
all proper faces of $\sigma$.

%%%%%%%%%%%%%%%%%%%%%%%%%%%%%%%%%%%%%%%%%%%%%%%%%%%%%%%%%%%%%%%%%%%%%%%%%%%%%%%%%%%%%%%%%%%%%%%%%%%%

\subsection{Oriented Matroids}
\label{ssec:OrientedMatroids}

We recall some basics from oriented matroid theory.
An oriented matroid can be regarded as a combinatorial abstraction of
a real hyperplane arrangement, i.e.\ a finite set of hyperplanes in a finite dimensional real vector space.
The standard reference is the book by Bj\"orner et al.\ \cite{BLSWZ1999_OrientedMatroids}.
In our study, we adopt the point of view that oriented matroids constitute a
special class of regular cell complexes, see Remark~\ref{rem:OMTopRep} below.

First, let us recall some notation for sign vectors.
Let $E$ be some finite set and consider a sign vector $\sigma \in \{+,-,0\}^E$.
We denote by $z(\sigma) := \{ e \in E \mid \sigma_e = 0\}$ its \emph{zero set}.
The \emph{opposite} vector $-\sigma$ is defined by
\[
(-\sigma)_e := \begin{cases}
	- & \text{ if } \sigma_e = +, \\
	+ & \text{ if } \sigma_e = -, \\
	0 & \text{ if } \sigma_e = 0.
\end{cases}
\]
The \emph{composition} $\sigma \circ \tau$ of two sign vectors $\sigma, \tau \in \{+,-,0\}^E$  
is defined by
\[
(\sigma \circ \tau)_e := \begin{cases}
	\sigma_e &\text{ if } \sigma_e \neq 0, \\
	\tau_e & \text{ else},
\end{cases}
\]
and their \emph{separating set} $S(\sigma,\tau)$ is defined as
\[
S(\sigma,\tau) := \{e \in E \mid \sigma_e = -\tau_e \neq 0\}.
\]
If $A \subseteq E$ and $\sigma \in \{+,-,0\}^E$ we write $\sigma|_A := (\sigma_e \mid e \in A) \in \{+,-,0\}^A$
for its restriction to $A$.

\begin{definition}
\label{def:OMAxioms}
	An \emph{oriented matroid} $\OM = (E,\Lc)$ is given by a finite
	ground set $E$ and a subset of sign vectors $\Lc \subseteq \{+,-,0\}^E$ called \emph{covectors} of $\OM$,
	subject to satisfying the following axioms:
	\begin{enumerate}[(1)]
		\item $\mathbf{0} := (0,0,\ldots,0) \in \Lc$,
		\item if $\sigma \in \Lc$, then $-\sigma\in \Lc$,
		\item if $\sigma, \tau \in \Lc$, then $\sigma \circ \tau \in \Lc$,
		\item if $\sigma, \tau \in \Lc$ and $e \in S(\sigma,\tau)$
		there exists an $\eta \in \Lc$ such that $\eta_e = 0$ and
		$\eta_f = (\sigma \circ \tau)_f = (\tau \circ \sigma)_f$ for all $f \in E \setminus S(\sigma,\tau)$.
	\end{enumerate}
	
	Let $\leq$ be the partial order on the set $\{+,-,0\}$ defined by $0 < +$, $0 < -$, with
	$+$ and $-$ incomparable. This induces the product partial order on $\{+,-,0\}^E$ in which sign vectors
	are compared component wise.
	The \emph{face poset} or covector poset $(\Lc, \leq)$ of $\OM$ is obtained by considering the induced partial order on the
	subset $\Lc \subseteq \{+,-,0\}^E$.
	
	The \emph{rank} of $\OM$ is defined as the length of a maximal chain in $\Lc$.
	
	The maximal elements in $\Lc$ are called \emph{topes}; they are denoted by $\Tc = \Tc(\OM)$.
\end{definition}

Note that this is only one of several equivalent ways to define oriented matroids, cf.\ \cite[Ch.~3]{BLSWZ1999_OrientedMatroids}.

An element $e \in E$ is called a \emph{loop} of the oriented matroid $\OM =(E,\Lc)$
if $\sigma_e = 0$ for all $\sigma \in \Lc$.
Two elements $e,f \in E$ which are not loops are \emph{parallel} if $\sigma_e = 0$
implies $\sigma_f=0$ for all $\sigma \in \Lc$.

In the rest of the sequel, without loss of generality, we always assume an oriented matroid to
be simple, i.e.\ it contains no loops or parallel elements.
This is justified by the fact that the covector poset of an oriented matroid and its simplification are isomorphic, 
cf.\ \cite[Lem.~4.1.11]{BLSWZ1999_OrientedMatroids}.

We have the following principal examples of oriented matroids.
\begin{example}
	\label{ex:OMRealArrangement}
	Let $\Ac$ be an arrangement of hyperplanes in a real vector space $V \isom \RR^\ell$.
	For $H \in \Ac$ choose defining linear forms $\alpha_H \in V^*$ with $\ker(\alpha_H) = H$.
	Then $\OM(\Ac) = (\Ac,\Lc(\Ac))$ with 
	\[
		\Lc(\Ac) := \{ (\sgn(\alpha_H(v)) \mid H \in \Ac) \mid v \in V\} \subseteq \{+,-,0\}^\Ac
	\]
	is an oriented matroid.
\end{example}

We recall the definition of an affine oriented matroid.

\begin{definition}
	\label{def:AffOM}
	Let $\OM = (E,\Lc)$ be an oriented matroid and $g \in E$.
	Define $\OM_{\aff, g}:= (E,\Lc^+,g)$ by
	\[
		\Lc^+ := \{\sigma \in \Lc \mid \sigma_g = +\}.
	\]
	Then $\OM_{\aff,g}$ is called an \emph{affine} oriented matroid
	or the \emph{decone} of $\OM$ with respect to $g$.
\end{definition}

\begin{remark}
	\label{rem:OMTopRep}
	For our purposes, oriented matroids are best thought of as special instances of regular cell complexes.
	This is justified by the topological representation theorem due to Folkman and Lawrence \cite{FolkmanLawrence1978_OMs},
	see also \cite[Thm.~5.2.1]{BLSWZ1999_OrientedMatroids}.
	Its core implication that the reduced covector poset $\Lc\setminus\{\Zero\}$ 
	is the face poset of a regular cell decomposition of a sphere induced by
	an arrangement of tamely embedded codimension one subspheres suffices for our study, see also Theorem~\ref{thm:OMCovectorShellable}.
	Moreover, $\Lc\setminus\{\Zero\}$ is a PL-sphere, so $\Lc^\dual$ represents a regular cell decomposition
	of a ball, cf.\ \cite[Prop.~4.7.26]{BLSWZ1999_OrientedMatroids}.
\end{remark}

\bigskip

There is another poset associated to an oriented matroid $\OM = (E,\Lc)$ which is essential for our considerations.
\begin{definition}
	The \emph{geometric lattice} $L(\OM)$ of $\OM$ is defined as
	\[
	L(\OM) := \{ z(\sigma) \mid \sigma \in \Lc \} \subseteq 2^E,
	\]
	with partial order given by inclusion.
	Elements of $L(\OM)$ are called \emph{flats}.
	For two elements $X,Y \in L(\OM) $, their join is $X \vee Y = \inf\{Z \in L(\OM) \mid X\cup Y \subseteq Z\}$,
	their meet is $X \wedge Y = X \cap Y$.
	 
	Moreover, for $X,Y \in L=L(\OM)$ we write $L_X := L_{\leq X}$ for the lattice of the \emph{localization} at $X$,
	$L^Y := L_{\geq Y}$ for the lattice of the \emph{contraction} to $Y$ 
	and $L_X^Y := [X,Y] := L_{\leq X} \cap L_{\geq Y}$ for an interval.
\end{definition}

The lattice $L=L(\OM)$ is graded by \emph{rank}, i.e.\
the rank of $X \in L$ is the length of a maximal chain in $L_{\leq X}$.

Note that the map $z:\Lc \to L$ is a cover and rank preserving, order reversing surjection \cite[Prop.~4.1.13]{BLSWZ1999_OrientedMatroids}.
The associated order preserving map $\Lc^\vee \to L$ is also denoted by $z$.

\begin{definition}[Restriction, contraction, and localization]%\hfill
	\label{def:DelitionContraction}
	Let $\OM = (E,\Lc)$ be an oriented matroid.
	\begin{enumerate}
		\item
		For a subset $A \subseteq E$ the \emph{restriction} $\OM|_A = (A,\Lc|_A)$ of $\OM$ to $A$ is
		defined by
		\[
			\Lc|_A := \{\sigma|_A \mid \sigma \in \Lc\}.
		\]
		
		\item
		For $X \subseteq E$ the \emph{contraction} $\OM/X = (E\setminus X, \Lc/X)$ is defined by
		\[
		\Lc/X := \{ \sigma|_{E\setminus X} \mid \sigma \in \Lc \text{ with } X \subseteq z(\sigma)\}.
		\]
		
		\item 
		For a flat $X \in L(\OM)$ the \emph{localization} of $\OM$ at $X$ is $\OM_X := \OM|_X$.
		The natural projection of $\Lc$ to $\Lc_X := \Lc|_X$ is denoted by $\rho_X:\Lc \to \Lc_X, \sigma \mapsto \sigma|_X$.
	\end{enumerate}
	
\end{definition}

\begin{remark}
	\label{rem:CovectorsContraction}
	%	Let $L = L(\OM)$.
	For the covectors of the contraction $\OM/X$ we also use the notation
	$\Lc^X := \Lc/X$ where $L^X = \{Y \in L(\OM) \mid Y \geq X\}$.
	We often identify $\Lc^X$ with $z^{-1}(L^X)$ by the obvious isomorphism 
	$z^{-1}(L^X) \isom \Lc^X$.
	Therefor, we write $\Lc^X \subseteq \Lc$ or $\Lc^X \hookrightarrow \Lc$.
	
	If $L_X^Y = [Y,X] \subseteq L(\OM)$ we frequently write $\Lc_X^Y := \Lc_X/Y$.
\end{remark}

\begin{remark}
	\label{rem:SectionRhoX}
	Let $X \in L$ and let $\rho_X: \Lc \to \Lc_X$ be the localization map.
	Then for any $\alpha \in \Lc$ with $z(\alpha) = X$ we have a section
	$\iota_\alpha:\Lc_X \to \Lc$ to $\rho_X$ which is defined by:
	\[
	\iota_\alpha(\sigma)_e := \begin{cases}
		\sigma_e &\text{if } e \in X,\\
		\alpha_e &\text{else}.
	\end{cases}
	\]
	An easy calculation shows that both the localization map $\rho_X$ and the section $\iota_\alpha$ preserve
	the composition of covectors, i.e.\ we have $\rho_X(\sigma\circ\tau) = \rho_X(\sigma)\circ\rho_X(\tau)$
	for all $\sigma, \tau \in \Lc$,
	and $\iota_\alpha(\sigma'\circ\tau') =   \iota_\alpha(\sigma')\circ\iota_\alpha(\tau')$
	for all $\sigma',\tau' \in \Lc_X$.
\end{remark}

We recall the notion of an extension of an oriented matroid.
\begin{definition}
	\label{def:OMExtension}
	Let $\OM = (E,\Lc)$ be an oriented matroid.
	Then another oriented matroid $\wt{\OM} = (E',\Lc')$ with $E \subseteq E'$ is called an \emph{extension} of $\OM$
	if $\wt{\OM}|_E = \OM$.
\end{definition}

If $\wt{\OM}$ is an extension of $\OM$, then we have a natural injective map
$L(\OM) \hookrightarrow L(\wt{\OM}), X \mapsto \wt{X}$,
where $\wt{X} = \inf\{X' \in L(\wt{\OM}) \mid X \subseteq X'\}$.

%%%%%%%%%%%%%%%%%%%%%%%%%%%%%%%%%%%%%%%%%%%%%%%%%%%%%%%%%%%%%%%%%%%%%%%%%%%%%%%%%%%%%%%%%%%%%%%%%%%%

\subsection{Modular flats and supersolvable geometric lattices}
\label{ssec:ModularSS}

For this section, let $L = L(\OM)$ be the geometric lattice of a simple oriented matroid $\OM = (E,\Lc)$.
The notion of a modular flat in a geometric lattice was introduced
by Stanley \cite{Stanley1971_ModularElts}.

\begin{definition}
	\label{def:ModularElementGeometricLattice}
	Let $X \in L$.
	Then $X$ is \emph{modular} if
	for all $Y,Z \in L$ with $Z \leq Y$ we have
%	Let $X,Y \in L$. Then $(X,Y)$ is called a \emph{modular pair}, if for all $Z \in L$ with $Z \leq Y$
%	we have
	\[
	Z \vee (X \wedge Y) = (Z\vee X)\wedge Y.
	\]
%	
%	An element $X \in L$ is \emph{modular} if $(X,Y)$ is a modular pair for all $Y \in L$.
\end{definition}

The following notion is also due to Stanley \cite{Stan72_SSLat}.
\begin{definition}
	\label{def:SupersolvableL}
	The geometric lattice $L$ is called \emph{supersolvable} if there is a maximal chain
	$X_0 < X_1 < \ldots < X_r$ of flats in $L$ such that $X_i$ is modular for all $0\leq i \leq r$.
	
	In this case, we call $\OM$ itself supersolvable.
\end{definition}

Supersolvable oriented matroids of rank $3$ are easily describes by the following lemma,
the proof of which is left to the reader.
\begin{lemma}
	\label{lem:SSLrk3}
	Assume $\OM$ to be of rank $3$.
	Then $\OM$ is supersolvable if and only if there is a flat $X \in L$ of rank $2$
	such that for all flats $Y \in L$ of rank $2$ we have 
	$X \cap Y \neq \emptyset$.
\end{lemma}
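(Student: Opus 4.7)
The plan is to reduce supersolvability in rank $3$ to the modularity of a single rank-$2$ flat, and then characterize this modularity by a rank identity. First I would observe that in any maximal chain $\hat 0 = X_0 \lessdot X_1 \lessdot X_2 \lessdot X_3 = \hat 1$ of $L = L(\OM)$, the flats $X_0$ and $X_3$ are trivially modular, and the atom $X_1$ is modular by a standard argument: the equality $Z \vee (X_1 \wedge Y) = (Z \vee X_1) \wedge Y$ follows by separating the cases $X_1 \leq Y$ and $X_1 \not\leq Y$, using semimodularity of $L$ to conclude that $Z \vee X_1$ covers $Z$ in the non-trivial case. Therefore $\OM$ is supersolvable if and only if there exists a modular rank-$2$ flat $X \in L$.

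Next, since $L$ is semimodular, a flat $X$ is modular if and only if the modular pair identity $r(X) + r(Y) = r(X \vee Y) + r(X \wedge Y)$ holds for every $Y \in L$, as established in \cite{Stanley1971_ModularElts}. For a rank-$2$ flat $X$ in our rank-$3$ lattice I would check this identity by cases: it is automatic when $Y$ is comparable to $X$ or lies in $\{\hat 0, \hat 1\}$; and when $Y$ is an atom with $Y \not\leq X$, then $X \vee Y$ strictly dominates $X$, so $X \vee Y = \hat 1$ and necessarily $X \wedge Y = \hat 0$, and the ranks balance as $2+1 = 3+0$. The only remaining case is $Y \in L$ of rank $2$ with $Y \neq X$, where again $X \vee Y = \hat 1$, and the identity $2 + 2 = 3 + r(X \wedge Y)$ reduces to the single condition that $X \wedge Y$ is an atom of $L$.

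Finally, since $\OM$ is simple, $L$ contains no loops, $\hat 0$ corresponds to $\emptyset \subseteq E$, and the meet in $L$ coincides with the set-theoretic intersection of the underlying closed subsets of $E$. Hence $X \wedge Y$ is an atom of $L$ precisely when $X \cap Y \neq \emptyset$. Combining these observations with the first step gives the claimed equivalence: the existence of a modular rank-$2$ flat is the same as the existence of a rank-$2$ flat $X$ that meets every other rank-$2$ flat.

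The only mild obstacle is cleanly invoking the modular-pair characterization of modular flats in a semimodular lattice; if one prefers to avoid it, one can instead verify the defining identity $Z \vee (X \wedge Y) = (Z \vee X) \wedge Y$ directly by enumerating the possible ranks of $Z$ and $Y$, which is routine but more tedious.
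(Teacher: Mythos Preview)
Your argument is correct. The paper itself leaves this lemma to the reader, so there is no proof to compare against; your reduction to the existence of a modular rank-$2$ flat via the automatic modularity of $\hat 0$, $\hat 1$, and atoms, followed by Stanley's rank characterization of modularity, is exactly the standard route and is carried out cleanly.
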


Modular flats have the following property (which is actually equivalent to modularity).
\begin{lemma}[{\cite[Thm.~3.3]{Brylawski1975_ModularConstr}}]
	\label{lem:IntervalsModGeomL}
	Let $X \in L$ be a modular flat. 
	Then for $Y \in L$ the map $p_X:[Y,X\vee Y] \to [X \wedge Y, X], Z \mapsto Z \wedge X$ is a poset isomorphism
	with inverse $s_Y:[X \wedge Y, X] \to [Y,X\vee Y], W \mapsto W \vee Y$.
\end{lemma}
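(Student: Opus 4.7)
The plan is to verify four items: well-definedness, order-preservation, and the two composition identities $s_Y \circ p_X = \mathrm{id}$ and $p_X \circ s_Y = \mathrm{id}$. Well-definedness and monotonicity are routine. For $Z \in [Y, X \vee Y]$, monotonicity of meet gives $X \wedge Y \leq Z \wedge X \leq X$, so $p_X(Z) \in [X \wedge Y, X]$; dually $s_Y(W) \in [Y, X \vee Y]$ for $W \in [X \wedge Y, X]$. Both maps respect the order since meet and join do.

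For $s_Y \circ p_X = \mathrm{id}$ I would substitute directly into the modular identity. Applying it with $Y$ in the role of Stanley's ``$Z$'' and $Z$ in the role of Stanley's ``$Y$'' (permitted since $Y \leq Z$) yields
\[
s_Y(p_X(Z)) \;=\; Y \vee (X \wedge Z) \;=\; (Y \vee X) \wedge Z \;=\; Z,
\]
where the last equality uses $Z \leq X \vee Y$.

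The identity $p_X \circ s_Y = \mathrm{id}$, that is $(W \vee Y) \wedge X = W$ for $X \wedge Y \leq W \leq X$, is the subtle step and the main obstacle. The one-sided modularity stated in the excerpt does not produce this equation by direct substitution, since its hypothesis $Z \leq Y$ cannot be arranged so as to place $W$, $Y$, $X$ in the required positions; every naive attempt collapses to a tautology. To bridge the gap I would invoke the fact that in a geometric lattice Stanley's one-sided modularity of $X$ is equivalent to the symmetric condition that $(X, Y)$ is a modular pair from both sides, equivalently to the rank identity $\mathrm{rk}(X) + \mathrm{rk}(Y) = \mathrm{rk}(X \vee Y) + \mathrm{rk}(X \wedge Y)$ for every $Y$. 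That symmetric form delivers $W \vee (X \wedge Y) = (W \vee Y) \wedge X$ for every $W \leq X$; since $X \wedge Y \leq W$, the left-hand side collapses to $W$, as required. This symmetry is precisely the content of Brylawski's Theorem~3.3 \cite{Brylawski1975_ModularConstr} cited in the statement, and once it is in hand the lemma follows immediately. Apart from this appeal to the two-sided characterization of modular elements, all remaining steps are purely formal.
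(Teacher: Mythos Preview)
The paper does not supply its own proof of this lemma; it is stated with a bare citation to \cite[Thm.~3.3]{Brylawski1975_ModularConstr} and used as a black box. Your proposal is correct and in fact unpacks exactly why that citation is necessary: the first composition $s_Y\circ p_X=\id$ follows by a direct substitution into Definition~\ref{def:ModularElementGeometricLattice}, while the second composition $p_X\circ s_Y=\id$ genuinely requires the two-sided (rank-equality) characterization of modular elements in a geometric lattice, which is the content of Brylawski's theorem. So your argument is not an alternative to the paper's proof but rather an expansion of the cited reference; the only caveat is that you are still ultimately invoking the same external result the paper cites, just at a more precise point in the argument.
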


%%%%%%%%%%%%%%%%%%%%%%%%%%%%%%%%%%%%%%%%%%%%%%%%%%%%%%%%%%%%%%%%%%%%%%%%%%%%%%%%%%%%%%%%%%%%%%%%%%%%

\subsection{Topes and convex subsets}

Recall that we denote the topes of an oriented matroid $\OM = (E,\Lc)$,
i.e.\ the maximal elements of $\Lc$ by $\Tc$.
As remarked earlier, without loss of generality, we assume that $\OM$ is simple.

\begin{definition}
	\label{def:TopePoset}
	Let $B \in \Tc$.
	We define a partial order on $\Tc$ by
	\[
	R \leq T :\iff S(B,R) \subseteq S(B,T).
	\]
	The resulting ranked poset $\Tc(\OM,B)$  with rank function $\rk(T) := |S(B,T)|$
	is called the \emph{tope poset} with respect to $B$.
\end{definition}

\begin{definition}
	\label{def:ConvexSubsets} 
	Let $\Tc$ be the topes of an oriented matroid $\OM = (E,\Lc)$.
	\begin{enumerate}[(i)]
		\item 
		Let $T,R \in \Tc$ be two topes. 
		Then the \emph{distance} $\dist(T,R)$ of $T$ and $R$ is
		defined as $\dist(T,R) := |S(T,R)|$.
		
		\item
		Let $e \in E$. 
		Denote by $\Tc_e^+ := \{ T \in \Tc \mid T_e = +\} \subseteq \Tc$ the \emph{positive halfspace}
		with respect to $e$.
		The \emph{negative halfspace} $\Tc_e^- \subseteq \Tc$ is defined analogously.
		Set $\Hs = \Hs(\OM) = \{T_e^\epsilon \mid e\in E, \epsilon \in \{+,-\} \}$,
		and an $\Hc \in \Hs$ is called a \emph{halfspace} of $\OM$.
		
		\item
		Let $\Qc \subseteq \Tc$ be a subset of topes.
		Then the \emph{convex hull} $\conv(\Qc)$ of $\Qc$ is defined as the intersection
		of all halfspaces containing $\Qc$:
		\[
		\conv(\Qc) = \bigcap\limits_{\Qc \subseteq \Hc, \Hc \in \Hs} \Hc.
		\]
		
		\item 
		A subset of topes $\Qc \subseteq \Tc$ is called \emph{convex} if 
		$\conv(\Qc) = \Qc$.
		
		\item 
		For a subset $\Qc \subseteq \Tc$ we set 
		$\Lc(\Qc) := \{\sigma \in \Lc \mid \sigma \leq T$ for some $T \in \Qc\}$
		and
		$\Lc^\dual[\Qc] := \{\sigma \in \Lc^\dual \mid \Tc(\sigma) \subseteq \Qc\}$,
		where $\Tc(\sigma) = \{T \in \Tc \mid \sigma \leq_{\Lc} T\}$.
		
	\end{enumerate}
\end{definition}

\begin{remark}
	\label{rem:ConvexSubsetsDualCpx}
	Note that the definition of $\Lc^\dual[\Qc] \subseteq \Lc^\dual$ yields
	a subcomplex since $\Lc^\dual$ has the \emph{intersection property}, i.e.\
	a face is uniquely determined by its vertices.
	Furthermore, an easy calculation shows that $(\Lc \setminus \Lc(\Tc\setminus\Qc))^\dual = \Lc^\dual[\Qc]$.
\end{remark}

We record the following alternative characterization of convex subsets.
\begin{lemma}{\cite[Prop.~4.2.6]{BLSWZ1999_OrientedMatroids}}
	\label{lem:ConvSubsetDist}
	Let $\Qc \subseteq \Tc$.
	Then $\Qc$ is convex if and only if 
	$T,R \in \Qc$ and $Q \in \Tc$ with $\dist(T,Q) + \dist(Q,R) = \dist(T,R)$
	implies $Q \in \Qc$.
\end{lemma}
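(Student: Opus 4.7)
The plan is to translate the distance equality $\dist(T,Q)+\dist(Q,R)=\dist(T,R)$ into a pointwise betweenness condition and then handle each implication separately. The reformulation I have in mind is: for any three topes $T,Q,R$, the inclusion $S(T,R)\subseteq S(T,Q)\cup S(Q,R)$ holds automatically, since the single sign $Q_e$ cannot simultaneously equal both $T_e$ and $R_e$ when $T_e\neq R_e$. This already yields the triangle inequality $\dist(T,R)\leq\dist(T,Q)+\dist(Q,R)$, and because topes have no zero entries the equality case forces the cleaner statement that for every $e\in E$, if $T_e=R_e$ then $Q_e=T_e$. This betweenness characterization will be the workhorse of both directions.

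For the forward implication, assume $\Qc$ is convex and that $T,R\in\Qc$, $Q\in\Tc$ satisfy the distance equality. Any halfspace $\Hc=\Tc_e^\epsilon$ containing $\Qc$ has $T_e=R_e=\epsilon$, so the pointwise condition yields $Q_e=\epsilon$ and hence $Q\in\Hc$. Thus $Q$ lies in every halfspace containing $\Qc$, and consequently $Q\in\conv(\Qc)=\Qc$.

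For the backward implication, assume $\Qc$ is closed under the betweenness condition and let $Q\in\conv(\Qc)$; the goal is to exhibit $T,R\in\Qc$ with $T_e=R_e\Rightarrow Q_e=T_e$ for all $e$, after which the pointwise characterization together with closure forces $Q\in\Qc$. A preliminary observation that I would set up first is that for each $e\in E$ there exists some $T^{(e)}\in\Qc$ with $T^{(e)}_e=Q_e$: otherwise $\Qc\subseteq\Tc_e^{-Q_e}$ would force $Q\in\Tc_e^{-Q_e}$ via membership in $\conv(\Qc)$, a contradiction. So coordinate-wise witnesses inside $\Qc$ are always available.

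The main obstacle is merging these coordinate-wise witnesses into a single pair $T,R\in\Qc$ whose joint agreement set never conflicts with $Q$. My plan is to pick $T\in\Qc$ minimizing $\dist(T,Q)$ and then, using the existence of witnesses for every $e\in S(T,Q)$, to successively find a second tope $R\in\Qc$ agreeing with $Q$ on all of $S(T,Q)$, exploiting that the composition of covectors preserves membership in $\Lc$ and is compatible with the relevant projections (Remark~\ref{rem:SectionRhoX}). The combinatorial step verifying that such a merging always produces a genuine tope of $\Qc$ rather than only a lower covector is where the oriented matroid axioms beyond the halfspace definition really enter, and this is the step I expect to carry the bulk of the technical work; once the pair $(T,R)$ is in hand, the conclusion follows immediately from the betweenness reformulation set up at the outset.
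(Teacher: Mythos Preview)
The paper does not supply its own proof of this lemma; it is quoted verbatim from \cite[Prop.~4.2.6]{BLSWZ1999_OrientedMatroids}. So your proposal has to be judged on its own.

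Your reformulation of the equality $\dist(T,Q)+\dist(Q,R)=\dist(T,R)$ as the pointwise condition ``$T_e=R_e\Rightarrow Q_e=T_e$'' is correct (it is equivalent to $S(T,Q)\cap S(R,Q)=\emptyset$), and your forward implication is complete and clean.

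The backward implication, however, has a genuine gap, and the tools you propose for closing it are the wrong ones. Composition of covectors is useless here: for topes one has $T\circ R=T$, so no ``merging'' of the coordinate-wise witnesses $T^{(e)}$ can come from composition. The reference to Remark~\ref{rem:SectionRhoX} (sections of $\rho_X$) is likewise irrelevant to this statement. More substantively, your stated goal---produce a single $R\in\Qc$ agreeing with $Q$ on all of $S(T,Q)$---is stronger than what is needed and not obviously attainable by the route you sketch.

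The missing idea is the \emph{gatedness of halfspaces} in the tope graph (this is \cite[Lem.~4.2.2 and the discussion around Prop.~4.2.4--4.2.6]{BLSWZ1999_OrientedMatroids}): for any tope $T$ and any $e\in E$ there is a unique tope $T'$ in $\Tc_e^{-T_e}$ with $S(T,T')\subseteq S(T,P)$ for every $P\in\Tc_e^{-T_e}$. Granting this, your setup finishes immediately: choose $T\in\Qc$ minimizing $\dist(T,Q)$, suppose $e\in S(T,Q)$, take any witness $R\in\Qc$ with $R_e=Q_e$, and let $T'$ be the gate of $T$ in $\Tc_e^{Q_e}$. Then $S(T,T')\subseteq S(T,R)$ gives $T'\in[T,R]\subseteq\Qc$ by the betweenness hypothesis, while $S(T,T')\subseteq S(T,Q)$ and $T'\neq T$ give $\dist(T',Q)<\dist(T,Q)$, contradicting minimality. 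Hence $S(T,Q)=\emptyset$ and $Q=T\in\Qc$. This is essentially the argument in \cite{BLSWZ1999_OrientedMatroids}; your outline has the right skeleton (minimize distance, produce witnesses) but not the key structural input that makes it go through.
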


Two easy but useful facts are provided by the following lemmas.
\begin{lemma}
	\label{lem:LinExtTConvexSubset}
	Let $\Qc \subseteq \Tc$ be a convex subset of topes.
	Then for a tope $B \in \Qc$ there is a linear extension $\leT$ of $\Tc(\OM,B)$ such 
	that with respect to $\leT$ all the topes in $\Qc$ come first.
\end{lemma}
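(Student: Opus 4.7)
The plan is to reduce the statement to the earlier observation that, for any finite poset $P$ and any order ideal $I \subseteq P$, any linear extension of $I$ can be extended to a linear extension of $P$ in which all elements of $I$ come first. Thus it suffices to prove that a convex subset $\Qc \subseteq \Tc$ containing $B$ is an order ideal of the tope poset $\Tc(\OM,B)$.

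To do this, I would pick $T \in \Qc$ and $R \in \Tc$ with $R \leq T$ in $\Tc(\OM,B)$, i.e., $S(B,R) \subseteq S(B,T)$, and show $R \in \Qc$ by appealing to the distance characterization of convex subsets in Lemma~\ref{lem:ConvSubsetDist}. The key combinatorial computation, which I would perform in one short step, is to verify the identity of separating sets
\[
S(R,T) = S(B,T) \setminus S(B,R).
\]
Since topes take only values in $\{+,-\}$, this is just a case analysis on whether an element $e \in E$ lies in $S(B,R)$ or not: if $e \in S(B,R) \subseteq S(B,T)$ then $R_e = -B_e = T_e$, so $e \notin S(R,T)$; if $e \notin S(B,R)$ then $R_e = B_e$, so $e \in S(R,T)$ iff $B_e \neq T_e$ iff $e \in S(B,T)$.

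Taking cardinalities, this yields $\dist(B,R) + \dist(R,T) = \dist(B,T)$, so by Lemma~\ref{lem:ConvSubsetDist}, applied to the two elements $B, T \in \Qc$ and the tope $R$, we obtain $R \in \Qc$. Hence $\Qc$ is an order ideal of $\Tc(\OM,B)$, and the lemma follows from the aforementioned general fact about extending linear extensions from order ideals.

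I expect the only mild subtlety to be making sure that the identity $S(R,T) = S(B,T) \setminus S(B,R)$ is established cleanly for topes; everything else is formal. There is no real obstacle.
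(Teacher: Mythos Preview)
Your proposal is correct and follows essentially the same route as the paper: the paper's proof simply states that convexity together with Lemma~\ref{lem:ConvSubsetDist} implies $\Qc$ is an order ideal in $\Tc(\OM,B)$ for any $B \in \Qc$, and then invokes the general fact about linear extensions of order ideals. Your version just spells out the separating-set computation that the paper leaves implicit.
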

\begin{proof}
	The convexity of $\Qc$ and Lemma \ref{lem:ConvSubsetDist} imply that $\Qc$ forms an order ideal in
	$\Tc(\OM,B)$ for every $B \in \Qc$, cf.\ \cite[pp.~172ff.]{BLSWZ1999_OrientedMatroids}. This, in turn, implies the statement.
\end{proof}

We consider the localization map $\rho_X:\Lc^\dual \to \Lc_X^\dual$.
For the subset of topes (vertices of $\Lc^\dual$) 
in the poset fiber $(\rho_X\downarrow \sigma)$ we write $\Tc(\rho_X\downarrow \sigma)$.

\begin{lemma}
	\label{lem:FiberPosetTopesConvex}
	Let $\sigma$ in $\Lc_X^\dual$.
	Then $\Tc(\rho_X\downarrow \sigma)$ is a convex subset of $\Tc$.
\end{lemma}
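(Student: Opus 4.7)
The plan is to identify the vertex set of the poset fiber explicitly; this will show directly that it is an intersection of halfspaces and hence convex.

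First I would unravel the definition. Because $\Lc_X^\dual$ carries the reversed order, one has $(\Lc_X^\dual)_{\leq \sigma} = \{\tau \in \Lc_X \mid \sigma \leq \tau\}$ in the original (covector) order, and consequently
\[
	(\rho_X \downarrow \sigma) \;=\; \{\tau \in \Lc \mid \sigma \leq \tau|_X\}.
\]
The topes of $\Lc$ are the vertices of the regular CW-ball $\Lc^\dual$ (Remark~\ref{rem:OMTopRep}), so a tope $T \in \Tc$ lies in $\Tc(\rho_X \downarrow \sigma)$ if and only if $\sigma_e \leq T_e$ for every $e \in X$. Since $T_e \in \{+,-\}$, this is equivalent to requiring $T_e = \sigma_e$ for every $e \in X$ with $\sigma_e \neq 0$. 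Setting $S := X \setminus z(\sigma)$, this yields
\[
	\Tc(\rho_X \downarrow \sigma) \;=\; \bigcap_{e \in S} \Tc_e^{\sigma_e}.
\]

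To conclude, each $\Tc_e^{\sigma_e}$ is by definition a halfspace in $\Hs(\OM)$, and any intersection of halfspaces is convex: if $\Qc = \bigcap_{i} \Hc_i$ with $\Hc_i \in \Hs$, then every $\Hc_i$ contains $\Qc$, so $\conv(\Qc) \subseteq \bigcap_{i} \Hc_i = \Qc$, while the reverse inclusion is trivial. Applying this observation to $\Qc = \Tc(\rho_X \downarrow \sigma)$ finishes the proof.

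There is essentially no obstacle here; the only point that requires care is the bookkeeping of the two reversed orders on $\Lc^\dual$ and $\Lc_X^\dual$ when translating the fiber condition $\rho_X(T) \leq^\dual \sigma$ back into a componentwise inequality on sign vectors. Once that step is done, the identification of the fiber's tope set with an explicit intersection of halfspaces is immediate.
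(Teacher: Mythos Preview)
Your proof is correct and follows exactly the same approach as the paper: the paper's entire proof consists of the single displayed identity $\Tc(\rho_X\downarrow\sigma) = \bigcap_{e \in X\setminus z(\sigma)} \Tc_e^{\sigma_e}$, which is precisely the formula you derive, and the convexity of an intersection of halfspaces is left implicit. You have simply spelled out the bookkeeping that the paper omits.
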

\begin{proof}
	We have $\Tc(\rho_X\downarrow\sigma) = \bigcap\limits_{e \in X\setminus z(\sigma)} \Tc_e^{\sigma_e}$.
\end{proof}

%%%%%%%%%%%%%%%%%%%%%%%%%%%%%%%%%%%%%%%%%%%%%%%%%%%%%%%%%%%%%%%%%%%%%%%%%%%%%%%%%%%%%%%%%%%%%%%%%%%%

\subsection{Shellability and subcomplexes}
\label{ssec:Shellability}

We briefly elaborate on shellability of regular cell complexes, cf.\ \cite[App.~4.7]{BLSWZ1999_OrientedMatroids}.

\begin{definition}
	\label{def:ShellableCpx}
	Let $\Sigma$ be a pure $d$-dimensional regular cell complex.
	A linear ordering $\sigma_1,\sigma_2,\ldots,\sigma_t$ of its maximal cells is called
	a \emph{shelling} if either $d=0$, or $d\geq 1$ and the following conditions are satisfied:
	\begin{enumerate}[(i)]
		\item $\delta\sigma_j \cap (\bigcup_{i=1}^{j-1}\delta\sigma_i)$ is pure of dimension $d-1$ for $2\leq j \leq t$,
		
		\item $\delta\sigma_j$ has a shelling in which the $(d-1)$-cells of $\delta\sigma_j \cap (\bigcup_{i=1}^{j-1}\delta\sigma_i)$ 
		come first for $2 \leq j \leq t$,
		
		\item $\delta\sigma_1$ has a shelling.
	\end{enumerate} 
	A complex which admits a shelling is called \emph{shellable}.
\end{definition}

For our investigation, the principal example of a shellable regular cell complex is the
covector complex $\Lc$ of an oriented matroid.
More precisely, we have the following theorem.
\begin{theorem}[{\cite[Thm.~4.3.3]{BLSWZ1999_OrientedMatroids}}]
	\label{thm:OMCovectorShellable}
	The reduced covector poset $\Lc \setminus \{\Zero\}$ of an oriented matroid of rank $r$ is isomorphic to
	the face poset of a shellable regular cell decomposition of the $(r-1)$-sphere.
	Furthermore, every linear extension of $\Tc(\OM,B)$ is a shelling of $\Lc \setminus \{\Zero\}$.
\end{theorem}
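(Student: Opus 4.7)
The plan is to split the statement into two parts and attack them in sequence. The assertion that $\Lc\setminus\{\Zero\}$ is the face poset of a regular CW-decomposition of $S^{r-1}$ is the Folkman--Lawrence topological representation theorem (cf.\ Remark~\ref{rem:OMTopRep}), which I would invoke as a black box. The real content is the shellability claim.

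For shellability I would argue by induction on the rank $r$, the case $r=1$ (so $S^0$) being immediate. Assume $r\geq 2$; fix a tope $B$ and a linear extension $\leT$ of $\Tc(\OM,B)$, labelling topes $B=T_1 \leT T_2 \leT \cdots \leT T_t$. For each facet $F \lessdot T_j$ there is exactly one other tope $T'$ with $F\lessdot T'$, obtained from $T_j$ by flipping the unique coordinate $e\in E$ with $F_e=0$; I call $F$ a \emph{lower facet} of $T_j$ when $T'\leT T_j$, and write $\partial^- T_j$ for the set of lower facets. The separating-set definition of $\Tc(\OM,B)$ shows that $F$ is lower if and only if $e\in S(B,T_j)$. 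A direct check (using the observation that a proper face $\sigma\lneq T_j$ shared with an earlier tope $T_i$ must satisfy $S(T_i,T_j)\subseteq z(\sigma)$, and then arguing by contradiction that $S(T_i,T_j)\cap S(B,T_j)\neq\emptyset$) yields
\[
	\delta T_j \cap \Big(\bigcup_{i=1}^{j-1}\delta T_i\Big) \;=\; \bigcup_{F\in\partial^- T_j}\Lc_{\leq F},
\]
and this union is a pure $(r-2)$-dimensional subcomplex of $\delta T_j$. That establishes condition (i) of Definition~\ref{def:ShellableCpx}.

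Conditions (ii) and (iii) require a shelling of the $(r-2)$-sphere $\delta T_j$ whose initial segment is $\partial^- T_j$. My approach would be to identify the regular CW-structure on $\delta T_j$ (which is a sphere by Remark~\ref{rem:OMTopRep}) with the reduced covector complex of an auxiliary rank-$(r-1)$ oriented matroid local to $T_j$, produced from the composition and elimination axioms of Definition~\ref{def:OMAxioms}. Under this identification $\partial^- T_j$ becomes a convex set of topes in the sense of Definition~\ref{def:ConvexSubsets}(iv): by Lemma~\ref{lem:ConvSubsetDist}, any geodesic in the tope graph joining two lower facets stays among lower facets, because crossing a non-lower facet would move away from and then back toward $B$. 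Lemma~\ref{lem:LinExtTConvexSubset} then produces a linear extension of the local tope poset placing $\partial^- T_j$ first, and the inductive hypothesis promotes this extension to a shelling of $\delta T_j$ beginning with $\partial^- T_j$. The case $j=1$, where $\partial^- T_1=\emptyset$, handles condition (iii).

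The hardest ingredient is the identification of $\delta T_j$ as the reduced covector complex of a genuine rank-$(r-1)$ oriented matroid compatibly with the ambient tope ordering — in the realizable case this is transparent (cut with a small pseudosphere around an interior point of $T_j$), but abstractly it has to be extracted from the matroid axioms, and this is where the argument truly reduces the shellability of $\Lc\setminus\{\Zero\}$ to a smaller-rank instance. Once the reduction is in place, the convexity of $\partial^- T_j$ and the linear-extension bookkeeping become comparatively routine, and what remains is essentially combinatorial.
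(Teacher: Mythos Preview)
The paper does not supply its own proof of this theorem: it is stated with a citation to \cite[Thm.~4.3.3]{BLSWZ1999_OrientedMatroids} and used as a black box throughout (see also Remark~\ref{rem:OMTopRep}). So there is no in-paper argument to compare your proposal against.

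That said, your sketch is broadly the right shape and in fact mirrors the strategy in \cite{BLSWZ1999_OrientedMatroids}. A few remarks. First, the argument there is phrased in terms of \emph{recursive coatom orderings} of the augmented lattice $\hat{\Lc}$ rather than directly via Definition~\ref{def:ShellableCpx}; this packaging makes the induction cleaner because one only has to verify a local condition at each step. Second, the step you correctly flag as the crux --- identifying $\delta T_j$ with the reduced covector complex of a genuine rank-$(r-1)$ oriented matroid --- is handled in \cite{BLSWZ1999_OrientedMatroids} by showing that the interval $[\Zero,T_j]$ in $\Lc$ is isomorphic to the face lattice of the cells of a shellable sphere (this is part of the Folkman--Lawrence machinery combined with the ``Las Vergnas face lattice'' results, see \cite[\S4.1--4.3]{BLSWZ1999_OrientedMatroids}); one does not literally produce a new oriented matroid on $E$, but rather works with the induced poset structure on $\delta T_j$ directly. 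Your convexity argument for $\partial^- T_j$ is essentially correct, though in \cite{BLSWZ1999_OrientedMatroids} it is absorbed into the verification of the recursive coatom ordering axiom rather than stated separately via Lemma~\ref{lem:LinExtTConvexSubset}.

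In short: your proposal is a faithful outline of the standard proof, with the honest admission that the inductive identification of $\delta T_j$ is where the real work lies; the paper itself simply imports the result.
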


We list the following useful fact, cf.\ \cite[Prop.~4.7.26]{BLSWZ1999_OrientedMatroids}.
\begin{proposition}
	\label{prop:ShellingSphereSubsequenceBall}
	Let $\Sigma$ be a regular cell decomposition of the $d$-sphere which has a shelling
	$\sigma_1,\ldots,\sigma_t$.
	Then for $k <t$ the subcomplex $\Sigma(\{\sigma_1,\ldots,\sigma_k\})$
	is a shellable $d$-ball with shelling $\sigma_1,\ldots,\sigma_k$.
\end{proposition}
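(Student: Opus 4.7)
The plan is to establish both the shellability of the initial segment and the $d$-ball property simultaneously, by induction on the dimension $d$ and on the index $k$. The shellability claim is essentially immediate: conditions (i)--(iii) of Definition~\ref{def:ShellableCpx} applied to the subcomplex $\Sigma_k := \Sigma(\{\sigma_1,\ldots,\sigma_k\})$ only refer to intersections among the first $k$ maximal cells and to shellings of their proper boundaries, so they are inherited verbatim from the given shelling of $\Sigma$.

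For the $d$-ball property, I would induct on $k$, treating the analogous statement in dimension $d-1$ as known (with trivial base case in dimension $0$). The case $k = 1$ is clear since $\Sigma_1 = \overline{\sigma}_1$ is a regular closed $d$-cell, hence a $d$-ball. For the inductive step, consider the attaching region $B_k := \delta\sigma_k \cap \Sigma_{k-1}$. Shelling axiom (ii) applied to $\Sigma$ exhibits $B_k$ as an initial segment of a shelling of the $(d-1)$-sphere $\delta\sigma_k$ (the boundary of the regular $d$-cell $\sigma_k$). Applying the inductive hypothesis in dimension $d-1$ to this initial segment would then yield that $B_k$ is itself a shellable $(d-1)$-ball, \emph{provided} the initial segment is proper inside $\delta\sigma_k$.

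The main obstacle is to verify this properness, i.e.\ that $B_k \neq \delta\sigma_k$, which is precisely where the hypothesis $k < t$ enters. I would argue by contradiction: if $\delta\sigma_k \subseteq \Sigma_{k-1}$, then $\Sigma_k = \Sigma_{k-1} \cup \overline{\sigma}_k$ is a closed subset of the $d$-sphere $\Sigma$ that contains the open cell $\sigma_k$ together with its full topological boundary. By invariance of domain the open $d$-cell $\sigma_k$ is open in $\Sigma$, so $\Sigma_k$ is both closed and open in the connected space $\Sigma$, forcing $\Sigma_k = \Sigma$ and hence $k = t$.

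Granting properness, the final step is the standard topological fact that the union of two $d$-balls glued along a $(d-1)$-ball lying in each of their boundaries is again a $d$-ball: apply it to $\Sigma_{k-1}$ (a $d$-ball by the inductive hypothesis on $k$) and $\overline{\sigma}_k$, glued along $B_k$, which sits in $\partial \overline{\sigma}_k$ by definition and in $\partial \Sigma_{k-1}$ because the interior of $\sigma_k$ is disjoint from $\Sigma_{k-1}$. Combined with the shellability observation from the first paragraph, this completes the induction.
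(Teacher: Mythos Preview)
The paper does not prove this proposition; it is quoted from \cite[Prop.~4.7.26]{BLSWZ1999_OrientedMatroids}. Your overall strategy --- inherit shellability directly from the axioms, then induct on $k$ using the fact that two $d$-balls glued along a $(d-1)$-ball in their boundaries form a $d$-ball --- is the standard argument found there.

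Your properness step, however, has a genuine gap. You conclude that $\Sigma_k$ is open in $\Sigma$ from the observation that the single top-dimensional open cell $\sigma_k$ is open, but $\Sigma_k = \Sigma_{k-1}\cup\overline{\sigma}_k$ is strictly larger and contains many lower-dimensional cells; nothing you wrote explains why those points have neighborhoods inside $\Sigma_k$. A correct version uses the inductive hypothesis that $\Sigma_{k-1}$ is already a $d$-ball. Exactly as in your final paragraph, $\delta\sigma_k \subseteq \partial\Sigma_{k-1}$ because points of $\delta\sigma_k$ are limits of the exterior cell $\sigma_k$ and hence cannot lie in the manifold interior of $\Sigma_{k-1}$. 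But now both $\delta\sigma_k$ and $\partial\Sigma_{k-1}$ are $(d-1)$-spheres, so invariance of domain in dimension $d-1$ forces $\delta\sigma_k = \partial\Sigma_{k-1}$. Thus $\Sigma_k$ is two $d$-balls glued along their full boundary spheres, i.e.\ a closed $d$-manifold, and only \emph{now} does invariance of domain make $\Sigma_k$ open in $\Sigma$, giving $\Sigma_k = \Sigma$ and $k=t$. A smaller caveat: the ball-gluing fact in your last paragraph is false for arbitrary topological balls and needs the PL category; shellable regular cell complexes are PL, but this is itself part of what \cite[Prop.~4.7.26]{BLSWZ1999_OrientedMatroids} records, so you should flag it rather than treat it as automatic.
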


With the preceding results we can delineate the following property of subcomplexes
generated by subsets of the topes $\Tc = \Tc(\OM)$ of an oriented matroid $\OM$. 
\begin{corollary}
	\label{coro:ShellableComplTConvex}
	Let $\emptyset \neq \Qc \subseteq \Tc$ be a convex subset of topes.
	Then both the subcomplexes $\Lc(\Qc)$ and $\Lc(\Tc\setminus\Qc)$ are shellable balls.
\end{corollary}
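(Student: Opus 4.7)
The plan is to derive both conclusions by combining Theorem~\ref{thm:OMCovectorShellable} (every linear extension of a tope poset shells the reduced covector sphere $\Lc\setminus\{\Zero\}$) with Proposition~\ref{prop:ShellingSphereSubsequenceBall} (the subcomplex generated by any proper initial segment of such a shelling is a shellable ball). In each case the task reduces to producing a linear extension of some tope poset $\Tc(\OM,B)$ in which the relevant family of topes appears as an initial segment.

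For $\Lc(\Qc)$ the argument is essentially immediate: pick any base tope $B\in\Qc$ and invoke Lemma~\ref{lem:LinExtTConvexSubset} to obtain a linear extension of $\Tc(\OM,B)$ in which the topes of $\Qc$ come first. By Theorem~\ref{thm:OMCovectorShellable} this is a shelling of $\Lc\setminus\{\Zero\}$, and by Proposition~\ref{prop:ShellingSphereSubsequenceBall} the subcomplex generated by its initial segment of length $|\Qc|$, which is precisely $\Lc(\Qc)$, is a shellable ball (the degenerate case $\Qc = \Tc$ where the whole sphere is produced is set aside; it also forces $\Tc\setminus\Qc=\emptyset$, so the second assertion is vacuous there).

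For $\Lc(\Tc\setminus\Qc)$ the subtle point is that $\Tc\setminus\Qc$ is in general \emph{not} convex, so Lemma~\ref{lem:LinExtTConvexSubset} does not apply directly to it. My plan is to flip the base tope. Pick $B\in\Qc$; I claim $-B\in\Tc\setminus\Qc$. Indeed, since $\OM$ is simple every tope is zero-free, so for each $T\in\Tc$ the sets $S(B,T)$ and $S(T,-B)$ partition $E$, giving $\dist(B,T)+\dist(T,-B)=|E|=\dist(B,-B)$; if $-B$ also lay in $\Qc$, Lemma~\ref{lem:ConvSubsetDist} would then force $\Qc=\Tc$. The same full-support identity yields $S(-B,T)=E\setminus S(B,T)$ for every $T$, so the tope posets $\Tc(\OM,B)$ and $\Tc(\OM,-B)$ are opposite to each other. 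Now convexity of $\Qc$ makes it an order ideal of $\Tc(\OM,B)$, hence $\Tc\setminus\Qc$ is an order filter of $\Tc(\OM,B)$, equivalently an order ideal of $\Tc(\OM,-B)$. Extending any linear order on this ideal to a linear extension of $\Tc(\OM,-B)$ that places it first, and then feeding the result into Theorem~\ref{thm:OMCovectorShellable} and Proposition~\ref{prop:ShellingSphereSubsequenceBall}, exhibits $\Lc(\Tc\setminus\Qc)$ as a shellable ball.

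The main obstacle is the tope-flipping observation in the second half: recognizing that although $\Tc\setminus\Qc$ need not itself be convex, it is an order ideal with respect to the antipodal base tope $-B$, so the same shelling machinery applies verbatim. Everything else is routine bookkeeping on top of the results already recalled in the preliminaries.
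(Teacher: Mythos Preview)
Your proof is correct and follows essentially the same approach as the paper: both arguments use Lemma~\ref{lem:LinExtTConvexSubset} directly for $\Lc(\Qc)$ and then flip the base tope to $-B$, observing that $\Tc(\OM,-B)$ is the opposite of $\Tc(\OM,B)$ so that $\Tc\setminus\Qc$ becomes an order ideal there. The paper states this last step more tersely (``the opposite linear extension of $\Tc(\OM,-B)$''), whereas you spell out why $-B\notin\Qc$ and why the two tope posets are dual, but the underlying idea is identical.
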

\begin{proof}
	Let $B \in \Qc$.
	By Lemma~\ref{lem:LinExtTConvexSubset} there is a linear extension of $\Tc(\OM,B)$ such that all the topes
	in $\Qc$ come first.
	Conversely, with respect to the opposite linear extension of $\Tc(\OM,-B)$
	all the topes in $\Tc\setminus\Qc$ come first. 
	This, in turn, yields the statement with Theorem~\ref{thm:OMCovectorShellable}
	and Proposition~\ref{prop:ShellingSphereSubsequenceBall}.  
\end{proof}

%%%%%%%%%%%%%%%%%%%%%%%%%%%%%%%%%%%%%%%%%%%%%%%%%%%%%%%%%%%%%%%%%%%%%%%%%%%%%%%%%%%%%%%%%%%%%%%%%%%%

\subsection{The Salvetti complex of an oriented matroid}
\label{ssec:SalvettiCpx}

We start by defining a certain poset which was first constructed by
Salvetti \cite{Salvetti1987_SalCpx} for an oriented matroid $\OM(\Ac)$
associated to a real hyperplane arrangement $\Ac$.
Gel'fand and Rybnikov \cite{GelfandRybnikov1990_AlgTopInvOfOMs} observed,
that the same construction applies more generally to oriented matroids.

\begin{definition}
	\label{def:SalvettiCpxOM}
	Let $\Lc$ be the poset of covectors of an oriented matroid $\OM$.
	Then the \emph{Salvetti poset} $\Sc = \Sc(\OM)$ is defined
	as
	\[
		\Sc := \{ (\sigma,T) \mid T \in \Tc\text{ and } \sigma \in \Lc_{\leq T} \} \subseteq \Lc \times \Tc,
	\]
	with partial order
	\[
	(\sigma, T) \leq_\Sc (\tau, R) :\iff \sigma \geq_\Lc \tau \text{ and } \sigma \circ R = T. 
	\]
	
	The \emph{affine} Salvetti poset $\Sc_{\aff,g}$ of the affine oriented matroid $\OM_{\aff,g}$ is defined as
	$\Sc_{\aff,g} := \{(\sigma,T) \in \Sc \mid \sigma_g=+, T_g=+\}$.
\end{definition}

The following theorem due to Salvetti \cite{Salvetti1987_SalCpx} 
is a fundamental result in the homotopy theory of complements
of complexified real arrangements. 

\begin{theorem}[{\cite{Salvetti1987_SalCpx}}]
	The Salvetti poset is the face poset of a regular cell complex.
	If $\OM=\OM(\Ac)$ is the oriented matroid associated to a hyperplane arrangement $\Ac$
	in a real vector space $V$,
	then the Salvetti complex is homotopy equivalent to the complement of the complexified arrangement:
	\[
		\Sc(\OM) \cong V\otimes\CC \setminus \left( \bigcup\limits_{H \in \Ac} H\otimes\CC \right).
	\]
\end{theorem}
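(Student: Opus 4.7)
The statement has two parts. The first --- that $\Sc$ is the face poset of a regular CW complex --- is a combinatorial claim valid for any oriented matroid, while the second --- the homotopy equivalence with the complexified complement --- is specific to the realizable case. I would handle them separately.

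For the combinatorial part, the plan is to invoke Bj\"orner's characterization of face posets of regular CW complexes: a graded poset with minimum $\hat{0}$ represents such a face poset iff every open interval $(\hat{0}, x)$ is an order complex of a sphere of the appropriate dimension. Unpacking the relation in $\Sc$ shows that once $\tau \geq_\Lc \sigma$ is fixed, the defining conditions $\tau \circ T = R$ and $(\tau, R) \in \Sc$ determine $R$ uniquely, and the composition condition between two such elements collapses to the covector order. Consequently, $(\tau, R) \mapsto \tau$ is an order isomorphism $\Sc_{\leq (\sigma, T)} \cong (\Lc_{\geq \sigma})^\dual$. Passing to the corresponding open interval, the relevant poset is $(\Lc_{> \sigma})^\dual$, whose order complex coincides with that of $\Lc_{> \sigma}$. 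By the topological representation theorem (Remark~\ref{rem:OMTopRep}), the latter is the face poset of the link of the cell $\sigma$ in a regular cell decomposition of the sphere, hence a sphere of the correct dimension. Therefore Bj\"orner's criterion applies and $\Sc$ is indeed a CW-poset, with cell dimensions read off from the corank in $L(\OM)$.

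For the geometric identification with the complexified complement, I would follow Salvetti's original construction in \cite{Salvetti1987_SalCpx}. Pick a representative point $p_\sigma$ in the relative interior of each face $\sigma$ of $\Ac$ and a point $q_T$ in each chamber $T$; realize the cell of $\Sc$ indexed by $(\sigma, T)$ as a small Euclidean box around $p_\sigma + \ii\, q_T \in V \otimes \CC$ whose real part varies over $\sigma$ and whose imaginary part varies over the chambers reached from $T$ across $\sigma$. Verifying the incidence pattern of these boxes recovers the poset $\Sc$ constructed above. One then produces a piecewise-linear deformation retract of the complement onto the union of these boxes by flowing each $z = x + \ii y$ according to the stratum of $\Ac$ containing $x$ and the relative position of $y$. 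The hard step, and essentially the only substantive one, is establishing continuity of this flow across cell boundaries; this is the technical content of Salvetti's paper, which I would cite rather than reproduce.
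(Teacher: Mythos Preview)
The paper does not prove this theorem; it is stated with a citation to \cite{Salvetti1987_SalCpx} and no argument is supplied. Your outline is the standard one and is correct: the interval isomorphism $\Sc_{\leq(\sigma,T)}\cong(\Lc_{\geq\sigma})^\dual$ you identify is exactly Lemma~\ref{lem:SalPrincipialIdealCovectors} of the paper (stated there only for the maximal cells $(\Zero,T)$), and combined with the topological representation theorem (Remark~\ref{rem:OMTopRep}) it verifies Bj\"orner's CW-poset criterion. For the homotopy equivalence with the complexified complement you, like the paper, ultimately defer to Salvetti's original construction.
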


Let $X \in L(\OM)$. The projection map $\rho_X:\Lc \to \Lc_X$ 
respects the composition of covectors by Remark~\ref{rem:SectionRhoX}.
Hence, $\rho_X$ induces an order preserving map 
\[
	\wt{\rho}_X: \Sc = \Sc(\OM) \to \Sc(\OM_X) =: \Sc_X, (\sigma,T) \mapsto (\rho_X(\sigma),\rho_X(T))
\]
between the Salvetti complexes.
Moreover, for each $\alpha \in \Lc$ with $z(\alpha) = X$ we get a section 
$\iota_\alpha:\Lc_X \to \Lc$ to $\rho_X$ by Remark~\ref{rem:SectionRhoX} which respects the composition of covectors.
Hence, this also yields a section
\[
\wt{\iota}_\alpha: \Sc_X \to \Sc, (\sigma',T') \mapsto (\iota_\alpha(\sigma'),\iota_\alpha(T'))
\]
to $\wt{\rho}_X$ and provides a simple argument for the following lemma.
\begin{lemma}
	\label{lem:Kpi1LocOMS}
	If $\Sc$ is a $K(\pi,1)$ complex then so is $\Sc_X = \Sc(\OM_X)$ for all $X \in L(\OM)$.
\end{lemma}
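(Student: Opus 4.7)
The plan is to exhibit $|\Sc_X|$ as a retract of $|\Sc|$ and then invoke the standard fact that a retract of an aspherical CW-complex is aspherical.

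First, I would choose any $\alpha \in \Lc$ with $z(\alpha) = X$ (such a covector exists since $X \in L(\OM)$) and check that $\wt{\rho}_X \circ \wt{\iota}_\alpha = \id_{\Sc_X}$. This reduces to verifying $\rho_X \circ \iota_\alpha = \id_{\Lc_X}$, which is immediate from Remark~\ref{rem:SectionRhoX}: for $\sigma' \in \Lc_X$ and $e \in X$ we have $\iota_\alpha(\sigma')_e = \sigma'_e$, so restricting $\iota_\alpha(\sigma')$ back to $X$ recovers $\sigma'$ (and similarly on topes). Applying the simplicial realization functor $|\Delta(-)|:\Pos \to \Top$ from Section~\ref{ssec:PosetTopology}, which is functorial in order-preserving maps, then produces continuous maps $|\wt\rho_X|:|\Sc|\to|\Sc_X|$ and $|\wt\iota_\alpha|:|\Sc_X|\to|\Sc|$ with $|\wt\rho_X|\circ|\wt\iota_\alpha| = \id_{|\Sc_X|}$. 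Thus $|\Sc_X|$ is a retract of $|\Sc|$.

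Next, choose a basepoint $x_0 \in |\Sc_X|$ and let $y_0 = |\wt\iota_\alpha|(x_0) \in |\Sc|$. Functoriality of $\pi_n$ yields, for every $n \geq 1$, a retraction of groups
\[
\pi_n(|\Sc_X|,x_0) \xrightarrow{|\wt\iota_\alpha|_*} \pi_n(|\Sc|,y_0) \xrightarrow{|\wt\rho_X|_*} \pi_n(|\Sc_X|,x_0),
\]
whose composition is the identity. Hence $|\wt\iota_\alpha|_*$ is injective. Assuming $\Sc$ is a $K(\pi,1)$-complex, $\pi_n(|\Sc|,y_0) = 0$ for all $n\geq 2$, so $\pi_n(|\Sc_X|,x_0) = 0$ for all $n \geq 2$ as well. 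Since $\Sc_X$ is a connected CW-complex (it is a regular cell complex associated to an oriented matroid), this suffices to conclude that $\Sc_X$ is a $K(\pi,1)$-space with $\pi = \pi_1(|\Sc_X|,x_0)$.

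There is no real obstacle here: the content of the lemma lies entirely in the existence of the explicit section $\wt\iota_\alpha$ provided in Remark~\ref{rem:SectionRhoX}, which reduces the statement to the elementary homotopy-theoretic fact that asphericity passes to retracts.
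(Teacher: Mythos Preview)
Your proof is correct and follows the same approach as the paper: both use the section $\wt{\iota}_\alpha$ to $\wt{\rho}_X$ from Remark~\ref{rem:SectionRhoX} to conclude that the higher homotopy groups of $\Sc_X$ inject into (equivalently, are quotients of) those of $\Sc$. The paper phrases this as surjectivity of $\pi_i(\wt{\rho}_X)$ while you phrase it as injectivity of $\pi_i(\wt{\iota}_\alpha)$, but these are the same observation, and your write-up simply spells out the details more carefully.
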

\begin{proof}
	Application of the homotopy group functors 
	directly yields that the map $\pi_i(\wt{\rho}_X):\pi_i(\Sc) \to \pi_i(\Sc_X)$ has
	a section $\pi_i(\wt{\iota}_\alpha)$ and thus is surjective for all $i$.
\end{proof}

The realizable analogue of the previous result was first observed by Oka for complements of complex hyperplane arrangements,
cf.\ \cite[Lem.~1.1]{Paris1993_DeligneCpx}.
Corollary~\ref{coro:ModFlatCork1Kpi1} gives an inverse to Lemma~\ref{lem:Kpi1LocOMS} 
for a modular flat $X$ of corank one.

The subcomplex of $\Sc$ consisting of all the faces of a maximal cell can be identified with the dual covector complex.
\begin{lemma}
	\label{lem:SalPrincipialIdealCovectors}
	Let $(\Zero,T) \in \Sc$ be a maximal element of $\Sc$.
	Then $\Sc_{\leq (\Zero,T)} \isom \Lc^\dual$.
\end{lemma}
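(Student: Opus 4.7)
The plan is to exhibit an explicit order-reversing bijection $\phi:\Lc\to \Sc_{\leq(\Zero,T)}$ defined by $\phi(\sigma):=(\sigma,\sigma\circ T)$. First I would verify this is well-defined: $\sigma\circ T$ is a tope because $T$ is a tope (any composition with a tope yields a tope of the same support, since topes have no zeros), and $\sigma\leq_\Lc \sigma\circ T$ holds by construction, so $(\sigma,\sigma\circ T)\in\Sc$. Moreover $(\sigma,\sigma\circ T)\leq_\Sc(\Zero,T)$ because $\sigma\geq_\Lc\Zero$ and $\sigma\circ T$ is the required tope.

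Next I would check bijectivity. Given any $(\tau,R)\in\Sc_{\leq(\Zero,T)}$, the order relation unfolds to $\tau\geq_\Lc\Zero$ (automatic) and $\tau\circ T=R$, so $R$ is forced to be $\tau\circ T$. This shows the inverse map $(\tau,R)\mapsto \tau$ is well-defined and two-sided, so $\phi$ is a bijection.

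The main point is verifying that $\phi$ reverses the order, i.e.\ that for $\sigma_1,\sigma_2\in\Lc$,
\[
(\sigma_1,\sigma_1\circ T)\leq_\Sc (\sigma_2,\sigma_2\circ T)\ \Longleftrightarrow\ \sigma_1\geq_\Lc \sigma_2.
\]
Unfolding $\leq_\Sc$, the left-hand side is the conjunction of $\sigma_1\geq_\Lc\sigma_2$ and $\sigma_1\circ(\sigma_2\circ T)=\sigma_1\circ T$. I would show the composition identity is actually a consequence of $\sigma_1\geq_\Lc\sigma_2$. Indeed, $\sigma_1\geq_\Lc\sigma_2$ means that in every coordinate $e$ either $(\sigma_1)_e=(\sigma_2)_e$ or $(\sigma_2)_e=0$; in particular $(\sigma_1)_e=0$ forces $(\sigma_2)_e=0$. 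Now compare both sides coordinate-wise: on coordinates where $(\sigma_1)_e\neq 0$ both compositions equal $(\sigma_1)_e$, and on coordinates where $(\sigma_1)_e=0$ we just argued $(\sigma_2)_e=0$ as well, so both compositions equal $T_e$. Hence the two sign vectors agree, and the order relation in $\Sc_{\leq(\Zero,T)}$ collapses to the reverse of the order in $\Lc$.

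I do not expect any serious obstacle: the only slightly delicate point is the coordinate-wise verification of $\sigma_1\circ(\sigma_2\circ T)=\sigma_1\circ T$ above, and that falls out immediately from the product partial order on $\{+,-,0\}^E$. Combining the three steps produces the asserted isomorphism $\phi:\Lc^\dual\xrightarrow{\sim}\Sc_{\leq(\Zero,T)}$.
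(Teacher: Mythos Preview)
Your proof is correct and follows exactly the same approach as the paper: the paper simply states that the maps $(F,R)\mapsto F$ and $F\mapsto(F,F\circ T)$ are mutually inverse and order preserving, which is precisely your $\phi$ and its inverse. You have merely supplied the coordinate-wise verification that the paper omits.
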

\begin{proof}
	The maps $\Sc_{\leq (\Zero,T)} \to \Lc^\dual, (F,R) \mapsto F$ 
	and $\Lc^\dual \to \Sc_{\leq (\Zero,T)}, F \mapsto (F,F\circ T)$ are 
	mutually inverse and order preserving.
\end{proof}

\begin{figure}
	\begin{tikzcd}[column sep=16mm, scale cd=1]
		\Sc \ar[r,"\wt{\rho}_X"] & \Sc_X \\
		\Sc_{\leq (\Zero,T)} \ar[u,hook] \ar[r,"\wt{\rho}_X|"]\ar[d,"\isom"] 
		&(\Sc_X)_{\leq(\Zero,\rho_X(T))} \ar[u,hook]\ar[d,"\isom"]  \\
		\Lc^\dual \ar[r,"\rho_X"] 
		&\Lc_X^\dual  
	\end{tikzcd}
	\caption{Localization maps restricted to subcomplexes of maximal cells are localization maps of the dual covector complexes.}
	\label{fig:DiagramSubcpxes} 
\end{figure}

\begin{corollary}
	\label{coro:SalRestrictionToPrincipalIdea}
	The diagram shown in Figure~\ref{fig:DiagramSubcpxes} commutes.		
\end{corollary}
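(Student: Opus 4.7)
The plan is to prove the corollary by a direct diagram chase, using the explicit description of the isomorphism given in Lemma~\ref{lem:SalPrincipialIdealCovectors} together with the compatibility of $\rho_X$ with the composition of covectors recorded in Remark~\ref{rem:SectionRhoX}.

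First I would verify that the upper square is well-defined, i.e.\ that $\wt{\rho}_X$ sends $\Sc_{\leq (\Zero,T)}$ into $(\Sc_X)_{\leq(\Zero,\rho_X(T))}$. Since $\wt{\rho}_X$ is an order preserving map and sends the maximal element $(\Zero,T)$ to $(\Zero,\rho_X(T))$, the entire principal order ideal below $(\Zero,T)$ is carried into the principal order ideal below $(\Zero,\rho_X(T))$. So the upper square commutes tautologically as a restriction.

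For the lower square, I would chase an arbitrary element $F \in \Lc^\dual$. Under the left vertical isomorphism of Lemma~\ref{lem:SalPrincipialIdealCovectors} it is sent to $(F, F\circ T) \in \Sc_{\leq (\Zero,T)}$. Applying $\wt{\rho}_X|$ yields $(\rho_X(F), \rho_X(F\circ T))$. By Remark~\ref{rem:SectionRhoX}, the localization map $\rho_X$ preserves the composition of covectors, so $\rho_X(F\circ T)=\rho_X(F)\circ\rho_X(T)$, and the result is exactly $(\rho_X(F), \rho_X(F)\circ \rho_X(T))$, which is the image of $\rho_X(F) \in \Lc_X^\dual$ under the inverse of the right vertical isomorphism of Lemma~\ref{lem:SalPrincipialIdealCovectors} applied to $\Sc_X$ and $\rho_X(T)$. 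Hence both ways of going around the lower square produce the same element.

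There is really no hard step here; this is a short verification whose only substantive ingredient is that $\rho_X$ is a homomorphism with respect to composition of covectors, which has already been established. The corollary thus follows by combining the two commutativities above.
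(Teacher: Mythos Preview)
Your proof is correct and is exactly the verification the paper has in mind; the corollary is stated without proof in the paper, as an immediate consequence of Lemma~\ref{lem:SalPrincipialIdealCovectors} together with the fact from Remark~\ref{rem:SectionRhoX} that $\rho_X$ preserves composition of covectors. Your diagram chase is the natural (and essentially only) way to spell this out.
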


%%%%%%%%%%%%%%%%%%%%%%%%%%%%%%%%%%%%%%%%%%%%%%%%%%%%%%%%%%%%%%%%%%%%%%%%%%%%%%%%%%%%%%%%%%%%%%%%%%%%
%%%%%%%%%%%%%%%%%%%%%%%%%%%%%%%%%%%%%%%%%%%%%%%%%%%%%%%%%%%%%%%%%%%%%%%%%%%%%%%%%%%%%%%%%%%%%%%%%%%%

\subsection{Discrete Morse theory}
\label{ssec:DiscMorseTheory}

We review some basic concepts from Forman's discrete Morse theory \cite{Forman1998_DiscrMorse} from the viewpoint
of acyclic matchings on posets respectively regular cell complexes adopted by Chari \cite{Chari2000_DiscreteMorseCombDecomp}. 
For a comprehensive exposition of the theory we refer to 
Kozlov's book \cite{Kozlov2008_CombAlgTop}.

\begin{definition}
	Let $P = (P,\leq)$ be a poset.
	We associate a directed graph $G(\Sigma) = (\Vc,\Ec)$ with vertex set $\Vc = P$ the ground set
	of $P$ and directed edges $\Ec := \{ (a,b) \mid a \lessdot b\}$ its cover relations,
	i.e. the Hasse diagram of the poset.
	
	A subset $\mtc \subseteq \Ec$ is called a \emph{matching} on $P$ if every element of $P$ appears
	in at most one $(a,b) \in \mtc$.
	
	Let $G(P,\mtc) = (\Vc,\Ec')$ be the new directed graph with $\Ec' := \Ec\setminus\mtc \cup \{(b,a) \mid (a,b) \in \mtc\}$, 
	i.e.\ the directed graph constructed from $G(P)$ by inverting all edges in $\mtc$.
	A matching $\mtc$ on $\Sigma$ is called \emph{acyclic} if $G(P,\mtc)$ has no directed cycles.
	
	For an acyclic matching $\mtc$ on $P$ its \emph{critical elements} $C(\mtc) \subseteq P$ are defined
	as
	\[
		C(\mtc) := \{ a \in P \mid a \notin e\text{ for all }e \in \mtc\}.
	\]
	
	In the same way we define acyclic matchings for a regular cell complex $\Sigma$ (which is identified with its poset of faces).
\end{definition}

Note that each acyclic matching $\mtc$ on a poset $P$ gives an acyclic matching 
$\mtc^\dual$ on the dual poset $P^\dual$
with $C(\mtc) = C(\mtc^\dual)$.

We record the following special case of the main theorem of discrete Morse theory,
cf.\ \cite{Forman1998_DiscrMorse}, \cite{Chari2000_DiscreteMorseCombDecomp}, \cite[Ch.~11]{Kozlov2008_CombAlgTop}.

\begin{theorem}
	\label{thm:MainThmDMTRegularSubcomplex}
	Let $\Sigma$ be a regular cell complex and let $\Gamma \subseteq \Sigma$ be a subcomplex.
	Assume that $\mtc$ is an acyclic matching on $\Sigma$ with $C(\mtc) = \Gamma$.
	Then $\Gamma$ is a strong deformation retract of $\Sigma$.
	In particular, the inclusion $\Gamma \hookrightarrow \Sigma$ is a homotopy equivalence.
\end{theorem}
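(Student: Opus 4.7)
The plan is to reduce the theorem to a sequence of elementary collapses of free faces, using the acyclicity of $\mtc$ to sequence the matched pairs consistently.

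Since $C(\mtc) = \Gamma$, the non-critical cells $\Sigma \setminus \Gamma$ are perfectly partitioned by $\mtc$ into pairs $(a,b)$ with $a \lessdot b$. I would enumerate these pairs as $(a_1, b_1), \ldots, (a_k, b_k)$ and set $\Sigma_i := \Gamma \cup \{a_j, b_j : j \leq i\}$. The aim is to choose the enumeration so that each $a_i$ is a free face of $b_i$ in $\Sigma_i$, meaning $b_i$ is the only proper coface of $a_i$ present in $\Sigma_i$. Granted such an ordering, the standard elementary collapse lemma for regular CW-complexes yields that each $\Sigma_{i-1}$ is a strong deformation retract of $\Sigma_i$, and composing these retractions presents $\Gamma = \Sigma_0$ as a strong deformation retract of $\Sigma = \Sigma_k$, which gives the homotopy equivalence of the inclusion.

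The crux is producing this enumeration, where the acyclicity of $G(\Sigma, \mtc)$ enters essentially. I would take a topological sort of $G(\Sigma, \mtc)$, which exists precisely because the digraph is acyclic, and derive an ordering of matched pairs from it (for instance by ordering pairs via the position of their $b$-coordinate in the sort, processed from latest to earliest). The freeness condition at step $i$ is then verified as follows: any coface $c$ of $a_i$ with $c \neq b_i$ is either critical, hence in $\Gamma \subseteq \Sigma_{i-1}$, or is matched as part of some pair $(a_j, b_j)$; in the latter case, the unreversed edge $a_i \to c$ in $G(\Sigma, \mtc)$ forces, via the topological sort, the pair containing $c$ to have been processed earlier, so $c \in \Sigma_{i-1}$.

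The main obstacle is the combinatorial bookkeeping needed to descend from a topological order on cells to an admissible order on matched pairs compatible with elementary collapses; in particular, one has to treat uniformly the cases where the competing coface $c$ is critical versus matched. A cleaner alternative, essentially equivalent, is to first convert $\mtc$ into a discrete Morse function $f : \Sigma \to \RR$ in the sense of Forman, satisfying $f(a) = f(b)$ on matched pairs and otherwise strictly increasing with dimension; the sublevel-set filtration of $f$ then automatically yields the filtration $\Gamma = \Sigma_0 \subset \cdots \subset \Sigma_k = \Sigma$, with each transition performing an elementary anti-collapse. Iterating the standard collapse lemma along this filtration then concludes the proof.
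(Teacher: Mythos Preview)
The paper does not prove this theorem at all: it is stated as a special case of the main theorem of discrete Morse theory and simply cited to Forman, Chari, and Kozlov. So there is no ``paper's proof'' to compare against; your proposal is a sketch of the standard argument from those references.

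That said, your verification of the freeness condition is inverted. You want $b_i$ to be the \emph{only} coface of $a_i$ in $\Sigma_i$, so for any other coface $c$ you must show $c \notin \Sigma_i$. Instead you argue in both cases that $c \in \Sigma_{i-1} \subseteq \Sigma_i$, which is precisely the opposite of what is required. In the critical case your own sentence shows the problem: if $c$ were critical then $c \in \Gamma \subseteq \Sigma_i$, and $a_i$ would \emph{not} be free. What actually saves you is the hypothesis that $C(\mtc)=\Gamma$ is a \emph{subcomplex}: since $a_i$ is matched it lies outside $\Gamma$, and as $\Gamma$ is downward closed no coface of $a_i$ can be critical. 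You never invoke this, and it is exactly where the subcomplex hypothesis enters. In the matched case you need the pair containing $c$ to have index $j>i$ (so $c \notin \Sigma_i$), not that it was ``processed earlier''. Your ordering by the $b$-coordinate in a topological sort handles $c=b_j$ cleanly (the path $b_i \to a_i \to b_j$ forces $i<j$), but when $c=a_j$ you only get that $b_i$ precedes $a_j$, which does not compare $b_i$ with $b_j$; a little more care is needed here, and one must simultaneously check that each $\Sigma_i$ is genuinely a subcomplex.

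Your fallback---converting $\mtc$ into a discrete Morse function and using Forman's sublevel filtration---is correct and is essentially the argument in the cited references, so the overall strategy is sound once the bookkeeping above is repaired.
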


The next proposition, due to Chari \cite{Chari2000_DiscreteMorseCombDecomp}, connects 
shellability with discrete Morse theory.
Another, more explicit description of acyclic matchings for
shellable regular cell complexes is due to Delucchi \cite{Delucchi2008_ShellableSalvetti}.

\begin{proposition}[{\cite[Prop.~4.1]{Chari2000_DiscreteMorseCombDecomp}}]
	\label{propo:ShellingMatchingBall}
	Let $\Sigma$ be a regular cell decomposition of the $d$-ball with shelling $\sigma_1,\ldots,\sigma_t$.
	Further, let $v \in \sigma_1$ be a vertex.
	Then there is an acyclic matching $\mtc$ on $\Sigma$ with $C(\mtc) = \{v\}$, i.e.\
	$\Sigma$ collapses onto $v$.
\end{proposition}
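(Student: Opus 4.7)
The plan is to proceed by double induction on the pair $(d, t)$ of dimension and number of top-dimensional cells, in lexicographic order, constructing the acyclic matching block by block along the shelling. The base case $d = 0$ is trivial: $\Sigma = \{v\}$ and the empty matching suffices.

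For $d \geq 1$ and $t = 1$, so $\Sigma = \overline{\sigma_1}$, I would apply condition (iii) of the shelling definition to fix a shelling $\tau_1, \ldots, \tau_m$ of the $(d-1)$-sphere $\delta\sigma_1$, arranged so that $v \in \tau_1$. By Proposition~\ref{prop:ShellingSphereSubsequenceBall}, $B := \Sigma(\{\tau_1, \ldots, \tau_{m-1}\})$ is a shellable $(d-1)$-ball containing $v$, and by the induction on $d$ it carries an acyclic matching $\mtc_B$ with $C(\mtc_B) = \{v\}$. Since adding $\tau_m$ closes $B$ up into the full $(d-1)$-sphere, the whole boundary $\delta\tau_m$ already lies in $B$, so the only cells of $\Sigma$ outside $B$ are $\sigma_1$ and $\tau_m$; setting $\mtc := \mtc_B \cup \{(\tau_m, \sigma_1)\}$ yields the desired matching.

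For $t \geq 2$, I would pass to $\Sigma_{t-1} := \Sigma(\{\sigma_1, \ldots, \sigma_{t-1}\})$, which remains a shellable $d$-ball since intermediate stages of a ball shelling cannot close into a sphere; by induction on $t$ it carries an acyclic matching $\mtc_{t-1}$ with $C(\mtc_{t-1}) = \{v\}$. The new block $N_t := \Sigma \setminus \Sigma_{t-1}$ consists of $\sigma_t$ together with those proper faces of $\sigma_t$ not appearing in any earlier $\overline{\sigma_i}$. Condition (ii) provides a shelling of $\delta\sigma_t$ with the old $(d-1)$-cells $\rho_1, \ldots, \rho_k$ first and the new ones $\rho_{k+1}, \ldots, \rho_m$ afterwards; by Proposition~\ref{prop:ShellingSphereSubsequenceBall}, both the old part and the closure of the new part of $\delta\sigma_t$ are shellable $(d-1)$-balls. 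Applying the $t=1$ construction inside the block $N_t$ — pairing $\sigma_t$ with a chosen top face of the new hemisphere and recursively matching the interior faces using the shelling restricted there — produces an acyclic matching $\mtc_{N_t}$ on $N_t$ with no critical cells. Set $\mtc := \mtc_{t-1} \cup \mtc_{N_t}$.

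The principal technical obstacle is verifying global acyclicity of the combined matching. The crucial geometric observation is that no cover relation $\xi \gtrdot \eta$ goes from $N_t$ down into $\Sigma_{t-1}$, because any face of a cell in $\Sigma_{t-1}$ stays inside $\Sigma_{t-1}$; and since matched pairs stay within their block, no reversed edge of $\mtc$ exits its block either. In the modified directed graph $G(\Sigma, \mtc)$ one can therefore transition upward from $\Sigma_{t-1}$ into $N_t$ but never back. A hypothetical directed cycle must then lie entirely inside one of the blocks $\Sigma_{t-1}$ or $N_t$, contradicting the block-level acyclicity inherited from the induction; applied recursively inside each block, this completes the verification.
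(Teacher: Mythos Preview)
The paper does not supply a proof of this proposition; it is quoted from Chari. Your block-by-block induction is the standard argument and is essentially correct in outline, with one real gap and two minor slips.

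The gap is in the $t=1$ step: you assert that the shelling of $\delta\sigma_1$ can be ``arranged so that $v\in\tau_1$'', but condition~(iii) of Definition~\ref{def:ShellableCpx} only guarantees \emph{some} shelling of the $(d-1)$-sphere $\delta\sigma_1$, with no control over which facet comes first. For general shellable regular CW spheres it is not automatic that every facet (or even some facet containing a prescribed vertex) can initiate a shelling. Without this, your recursion only produces a collapse to \emph{some} vertex of $\tau_1\subset\sigma_1$, establishing the weaker statement that there \emph{exists} $v\in\sigma_1$ with $C(\mtc)=\{v\}$. That existential version is in fact all the paper uses (see the proof of Proposition~\ref{prop:MatchingsCovectorShellable}), so for present purposes the gap is harmless; but the proposition as stated, for an arbitrary $v\in\sigma_1$, is not fully proved by your argument.

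Two smaller issues. First, your acyclicity check has the direction reversed: cover relations $\xi\gtrdot\eta$ with $\xi\in N_t$ and $\eta\in\Sigma_{t-1}$ \emph{do} occur (a new face sitting over an old subface). What is excluded, because $\Sigma_{t-1}$ is an order ideal, is $\eta\in N_t$ with $\xi\in\Sigma_{t-1}$; since all cross Hasse edges are therefore unmatched and directed upward into $N_t$, your conclusion that no directed path in $G(\Sigma,\mtc)$ returns from $N_t$ to $\Sigma_{t-1}$ is nevertheless correct. Second, the construction of the perfect matching $\mtc_{N_t}$ is sketchy; the clean formulation is to run the same block decomposition on $\delta\sigma_t$ along $\rho_1,\dots,\rho_m$: blocks $1,\dots,k$ lie in $\Sigma_{t-1}$, blocks $k+1,\dots,m-1$ are perfectly matched by the $(d-1)$-dimensional induction, block $m$ is the singleton $\{\rho_m\}$ (since $\rho_m$ caps the sphere and its entire boundary is already present), and $(\rho_m,\sigma_t)$ is the final pair.
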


We conclude by recalling the following useful tool to construct acyclic matchings on posets, and hence on cell complexes.
\begin{theorem}[Patchwork theorem {\cite[Thm.~11.10]{Kozlov2008_CombAlgTop}}]
	\label{thm:Patchwork}
	Let $f:P \to Q$ be a poset map. Assume that for all $q \in Q$ we have an acyclic matching $\mtc(q)$
	on $f^{-1}(q)$. Then $\mtc = \bigcup\limits_{q \in Q}\mtc(q)$ is an acyclic matching on $P$.
\end{theorem}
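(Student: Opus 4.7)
The plan is to verify that $\mtc := \bigcup_{q \in Q} \mtc(q)$ satisfies the two defining properties of an acyclic matching on $P$: that it is a matching on the Hasse diagram of $P$, and that it is acyclic.

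For the matching property, I would first observe that the fibers $\{f^{-1}(q) \mid q \in Q\}$ partition $P$, so any $x \in P$ lies in a unique fiber and hence is touched by at most one $\mtc(q)$, i.e.\ by at most one edge of $\mtc$. A small but necessary check is that each edge of $\mtc(q)$, which a priori is a cover relation in the subposet $f^{-1}(q)$, is a genuine cover relation in $P$. This is forced by order-preservation: if $a \lessdot b$ in $f^{-1}(q)$ and some $c \in P$ satisfied $a < c < b$, then $q = f(a) \leq f(c) \leq f(b) = q$ would put $c$ back in $f^{-1}(q)$, contradicting the cover in the fiber.

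For acyclicity I would argue by contradiction: suppose $a_0 \to a_1 \to \cdots \to a_n = a_0$ is a directed cycle in $G(P,\mtc)$. Each edge has one of two types. If $(a_i, a_{i+1})$ is an unmatched cover edge with $a_i \lessdot a_{i+1}$ in $P$, then $f(a_i) \leq f(a_{i+1})$ because $f$ is a poset map. If it is a reversed matched edge coming from some $(a_{i+1}, a_i) \in \mtc(q)$, then both endpoints lie in $f^{-1}(q)$ and so $f(a_i) = f(a_{i+1})$. Composing around the cycle yields
\[
 f(a_0) \leq f(a_1) \leq \cdots \leq f(a_n) = f(a_0),
\]
so $f$ is constant along the cycle and all $a_i$ belong to a single fiber $f^{-1}(q)$.

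It then remains to note that, by the remark above on cover relations, the edges of the cycle are precisely cover relations (or matched edges) of the subposet $f^{-1}(q)$, so the cycle is a directed cycle in $G(f^{-1}(q), \mtc(q))$, contradicting acyclicity of $\mtc(q)$. The only mildly subtle point in this plan is the bookkeeping between cover relations in $P$ and in the fibers, which I expect to be the main obstacle; once that is cleared the proof is essentially a one-line monotonicity argument, since $f$ is weakly monotone along any directed path in $G(P, \mtc)$ and strictly constant along matched edges.
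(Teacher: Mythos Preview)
Your argument is correct and is essentially the standard proof of the Patchwork theorem. Note, however, that the paper does not actually prove this statement: it is quoted as Theorem~\ref{thm:Patchwork} with a reference to Kozlov's book and used as a black box, so there is no ``paper's own proof'' to compare against. Your write-up matches the textbook argument: the only point requiring care is exactly the one you flag, namely that cover relations in a fiber $f^{-1}(q)$ coincide with cover relations in $P$ between elements of that fiber (both directions are needed and both follow immediately from $f$ being order preserving), after which the weak monotonicity of $f$ along directed paths in $G(P,\mtc)$ forces any cycle into a single fiber.
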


%%%%%%%%%%%%%%%%%%%%%%%%%%%%%%%%%%%%%%%%%%%%%%%%%%%%%%%%%%%%%%%%%%%%%%%%%%%%%%%%%%%%%%%%%%%%%%%%%%%%
%%%%%%%%%%%%%%%%%%%%%%%%%%%%%%%%%%%%%%%%%%%%%%%%%%%%%%%%%%%%%%%%%%%%%%%%%%%%%%%%%%%%%%%%%%%%%%%%%%%%

\section{Combinatorics of modular flats of oriented matroids}
\label{sec:CombModularOMS}

\begin{figure}
	\begin{center}
		\begin{tikzcd}[column sep=15mm, scale cd=1]
			\Lc^\dual \ar[r, "\rho_X"] \ar[d, "z"] & \Lc_X^\dual \ar[d, "z"] 
			& \sigma \ar[r, "\rho_X", maps to] \ar[d, "z", maps to] & \sigma|_X \ar[d, "z", maps to] \\
			L \ar[r, "p_X"] & L_X & z(\sigma) \ar[r, "p_X", maps to] & z(\sigma)\wedge X = z(\sigma|_X)
		\end{tikzcd}
	\end{center}
	\caption{Restriction maps of face posets and corresponding geometric lattices.}
	\label{fig:DiagramFaceLatticeGeomLocal}
\end{figure}

We study the properties of modular flats of oriented matroids in more detail.
The basic idea is to use the commutative square of poset maps depicted in Figure~\ref{fig:DiagramFaceLatticeGeomLocal}
to derive refined statements about covector posets from the corresponding statements about geometric lattices.

Let $L = L(\OM)$ be the geometric lattice of a simple oriented matroid and $\Lc$ its covector poset.
Consider the localization map $\rho_X:\Lc^\dual \to \Lc_X^\dual$ for the dual covector complexes
for an $X \in L$.
We state a lemma which connects the notion of a modular flat of corank one
to special subsets of topes.
\begin{lemma}
	\label{lem:ModCoRk1LinExtFiber}
	Let $X \in L$ be modular of corank $1$ and $B' \in \Tc_X$.
	Then there are exactly two topes $B_1,B_2 \in (\rho_X\downarrow B')$
	such that the restricted poset $\Tc(\OM,B_i)|_{\Tc(\rho_X\downarrow B')}$ is linearly ordered.
	
	In particular, for each $T \in \Tc(\OM,B_i)|_{\Tc(\rho_X\downarrow B')}, T \neq B_i$
	there is a unique $T'\in \Tc(\rho\downarrow B')$ with $T'\lessdot T$ and 
	$S(T',T) = \{e\}$ such that $e \in S(R,T)$ for each $R < T$, $R \in \Tc(\OM,B_i)$. 
\end{lemma}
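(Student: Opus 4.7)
The plan is to use the modularity of $X$ to pin down the combinatorial shape of the fibre $(\rho_X\downarrow B')$ as a one-dimensional path, at whose two ends sit the claimed topes $B_1,B_2$; the rest of the lemma then follows by inspection.

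I would first note that, since $B'$ is maximal in $\Lc_X$,
\[
(\rho_X\downarrow B') = \{\sigma\in\Lc \mid \sigma|_X = B'\},
\]
so every $\sigma$ in the fibre satisfies $z(\sigma)\wedge X = \hat 0$ in $L$. Because $X$ is a modular coatom, the rank formula for modular elements collapses to
\[
\rk(z(\sigma)) = \rk(z(\sigma)\vee X) - \rk(X) \in \{0,1\},
\]
using $z(\sigma)\vee X\in\{X,\hat 1\}$. Hence the fibre contains only topes ($z(\sigma)=\hat 0$) and ``walls'' with $z(\sigma)=\{e\}$ for some atom $e\in A:=E\setminus X$.

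Next I would describe the subcomplex $\Lc^\dual[\Tc(\rho_X\downarrow B')]$, which by Remark~\ref{rem:ConvexSubsetsDualCpx} coincides with the fibre. Its Hasse diagram has only two levels, so it is one-dimensional, and each wall is covered in $\Lc^\dual$ by exactly its two sign-extensions in the fibre (by the sphere-decomposition property of $\Lc\setminus\{\Zero\}$). The fibre topes form a convex set by Lemma~\ref{lem:FiberPosetTopesConvex}, and combining Corollary~\ref{coro:ShellableComplTConvex} with the duality of Remark~\ref{rem:ConvexSubsetsDualCpx} identifies the fibre subcomplex as a shellable ball; a one-dimensional shellable ball is a path graph. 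Consequently the fibre topes admit an enumeration
\[
B_1 = T_0,\ T_1,\ \ldots,\ T_n = B_2
\]
in which consecutive topes share a wall, hence differ in exactly one coordinate $e_{i+1}\in A$.

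With the path in hand, the remaining claims are mechanical. For $B_1=T_0$, the sets $S(B_1,T_i)=\{e_1,\ldots,e_i\}$ form a strictly increasing chain, giving the linear order $T_0<T_1<\cdots<T_n$ in $\Tc(\OM,B_1)|_{\Tc(\rho_X\downarrow B')}$; symmetrically for $B_2$. For any internal base $B=T_i$ with $0<i<n$, the neighbours $T_{i-1},T_{i+1}$ are incomparable rank-one elements of $\Tc(\OM,T_i)$ (their separating sets $\{e_i\},\{e_{i+1}\}$ are distinct singletons), ruling out linearity and yielding the uniqueness of $B_1,B_2$. For the ``in particular'' clause with $B_i=B_1$ and $T=T_k$ ($k\geq 1$), I would first observe that any $R$ with $R<T$ in $\Tc(\OM,B_1)$ has $S(B_1,R)\subseteq S(B_1,T)\subseteq A$, hence $R|_X=B'$, so $R$ automatically lies in the fibre; as fibre topes realise ranks $0,1,\ldots,n$ bijectively, $T'=T_{k-1}$ is the unique fibre tope of rank $k-1$, giving the unique cover $T'\lessdot T$ with $S(T',T)=\{e_k\}$, and for any $R=T_j<T$ one has $e_k\in S(R,T_k)=\{e_{j+1},\ldots,e_k\}$. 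The case $B_i=B_2$ is symmetric.

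The main obstacle is the first step, where modularity of $X$ is invoked through the rank formula to force the fibre to be one-dimensional; this is the sole place the hypothesis enters, and it is what makes the subsequent path-graph picture even possible. A subtler supporting point is the promotion of the one-dimensional subcomplex to an actual path graph, which leans simultaneously on Corollary~\ref{coro:ShellableComplTConvex}, the dualisation of Remark~\ref{rem:ConvexSubsetsDualCpx}, and the sphere-decomposition incidence for $\Lc\setminus\{\Zero\}$; everything after that reduces to bookkeeping with separating sets.
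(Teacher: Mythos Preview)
Your proof follows essentially the same approach as the paper's: use modularity of the coatom $X$ to force the fibre to be at most one-dimensional, conclude it is a path (``string''), and read off the remaining claims. The paper is terser---it identifies $B_1 = \iota_\alpha(B')$ and $B_2 = \iota_\beta(B')$ explicitly via the sections of Remark~\ref{rem:SectionRhoX}, shows one-dimensionality by a direct contradiction with modularity, and then simply asserts ``It follows that $(\rho_X\downarrow B')$ is a string with end vertices $B_1,B_2$''---whereas you spell out the linear-order, uniqueness, and cover claims carefully and correctly.

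One point to flag: your deduction that the fibre is a shellable ball by ``combining Corollary~\ref{coro:ShellableComplTConvex} with the duality of Remark~\ref{rem:ConvexSubsetsDualCpx}'' is not quite on the nose. Corollary~\ref{coro:ShellableComplTConvex} says the \emph{primal} subcomplexes $\Lc(\Qc)$ and $\Lc(\Tc\setminus\Qc)$ are shellable balls, while Remark~\ref{rem:ConvexSubsetsDualCpx} identifies the fibre with the \emph{dual} subcomplex $\Lc^\dual[\Qc] = (\Lc\setminus\Lc(\Tc\setminus\Qc))^\dual$; the passage from one to the other requires an additional argument that is not literally contained in those two statements. The paper itself does not justify this step either, so you are in good company. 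If you want an airtight argument staying within the one-dimensional setting, show directly (via covector axiom~(4) and the one-dimensionality already established) that for each $e\in E\setminus X$ there is exactly one wall in the fibre with zero set $\{e\}$; combined with connectedness of the convex set of fibre topes and the existence of a geodesic from $B_1$ to $B_2$ of length $|E\setminus X|$, this forces the fibre graph to be a path on $|E\setminus X|+1$ vertices.
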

\begin{proof}
	Since $X$ has corank $1$, there are exactly two covectors in $z^{-1}(X)$, say $\alpha$
	and $\beta = -\alpha$. Let $\iota_\alpha,\iota_\beta$ be the associated sections to $\rho_X$
	from Remark~\ref{rem:SectionRhoX}.
	Set $B_1 := \iota_\alpha(B')$ and $B_2 := \iota_\beta(B')$.
	Then the modularity of $X$ forces every cell in $(\rho_X\downarrow B')$ to have dimension $\leq 1$.
	Otherwise, there would be a cell $\tau \in (\rho_X\downarrow B')$ with $\rho_X(\tau) = B'$ 
	and $Y = z(\tau) \in L$ a flat of rank $2$ with $Y \wedge X =z(\rho_X(\tau)) = z(B')= \emptyset$, 
	contradicting the modularity of $X$. 
	It follows that $(\rho_X\downarrow B')$ is a string with end vertices $B_1,B_2$. 
	Thus, the two topes $B_1,B_2$ have the desired property which yields both statements.	
\end{proof}

\begin{figure}
	\def\ha{(-0.6, 0.6) circle [radius=2]}
	\def\hb{(0.6, 0.6) circle [radius=2]}
	\def\hc{(0.6, -0.6) circle [radius=2]}
	\def\hd{(-0.6, -0.6) circle [radius=2]}
	\def\he{(-3.5,0) -- (3.75,0)}
	\def\hesq{(-4,0) -- (4,0) -- (4,4) -- (-4,4) -- (-4,0)}
	\begin{tikzpicture}[scale=0.85]
		
		%			\draw[color=blue] (1.3,0) arc [start angle=20, end angle=60, radius=2cm];
		
		\begin{scope}
			\clip (-0.6, 0.6) circle [radius=2];
			\clip (-4,0) -- (4,0) -- (4,4) -- (-4,4) -- (-4,0);
			
			\fill[black!10!white] (-0.6,-0.6) circle[radius=2]; 
		\end{scope}
		%		\begin{scope}
			%			\clip \hb;
			%			\clip \hd;
			%			
			%			\draw[line width=1.5] \hc;
			%		\end{scope}
		%		\begin{scope}[even odd rule]
			%			\clip \hd;
			%			\clip \hesq;
			%			
			%			\draw[line width=1.5] \hb;
			%		\end{scope}
		
		\draw[line width=1pt] (-0.6, 0.6) circle [radius=2];	
		\draw (0.6, 0.6) circle [radius=2];	
		\draw (0.6, -0.6) circle [radius=2];	
		\draw[line width=1pt] (-0.6, -0.6) circle [radius=2];	
		\draw[line width=1pt] (-3.25,0) -- (3.25,0);
		
		\filldraw (1.3,0) circle [radius=0.07];
		\filldraw (-2.5,0) circle [radius=0.07];
		
		\node[above left] at (-2.5,0) {$\alpha$};
		\node[above right] at (1.3,0) {$\beta$};

		\node (T0) at (0,0.5) {\small{$T_2$}};
		\node (T1) at (-1,1) {\small{$T_1$}};
		\node (T2) at (-1.75,0.5) {\small{$T_0$}};

		\node (v) at (8,0) {};;
		\draw[->] (3.7,0) -- (4.7,0); 
		\node[above] at (4.25,0) {$\rho_X$};
		
		\begin{scope}
			\clip ($(0, 0.6)+(v)$) circle [radius=2];
			\clip ($(-4,0)+(v)$) -- ($(4,0)+(v)$) -- ($(4,4)+(v)$) -- ($(-4,4)+(v)$) -- ($(-4,0)+(v)$);
			
			\fill[black!10!white] ($(0,-0.6)+(v)$) circle[radius=2]; 
		\end{scope}
		
		\draw[line width=1pt] ($(0, 0.6)+(v)$) circle [radius=2];	
		\draw[line width=1pt] ($(0, -0.6)+(v)$) circle [radius=2];	
		\draw[line width=1pt] ($(-3,0)+(v)$) -- ($(3,0)+(v)$);
		
		\node at ($(0,0.6)+(v)$) {$B'$};
		
		\node[right] at (2,2) {$H_4$};
		\node[left] at (-2.1,2) {$H_2$};
		\node[left] at (-2,-2) {$H_3$};
		\node[right] at (2,-2) {$H_5$};
		\node[below] at (3,0) {$H_1$};
		
	\end{tikzpicture}

	\caption{A linear order $\Tc(\rho_X\downarrow B') = \{T_0,T_1,T_2\}$ as described in Lemma \ref{lem:ModCoRk1LinExtFiber}.}
	\label{fig:LinExtFiber}
\end{figure}
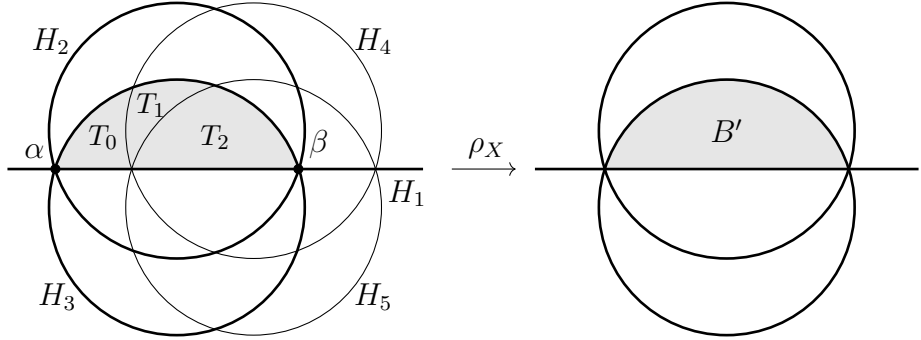

We give an example to illustrate Lemma \ref{lem:ModCoRk1LinExtFiber}.
\begin{example}
	\label{ex:LemLinExtTopesFiber}
	Consider the hyperplane arrangement $\Ac$ in $\RR^3$ consisting of the hyperplanes
	\begin{align*}
		\Ac = \{ &H_1=\ker(x),H_2=\ker(y),H_3=\ker(x+y),\\
		&H_4=\ker(z),H_5=\ker(x+z)\}		
	\end{align*}
	and $\OM = \OM(\Ac)$ the associated oriented matroid with topes $\Tc$.
	Then the flat $X = \{H_1,H_2,H_3\} \in L(\OM)$ is modular.
	A stereographic projection of $\Ac$ (or rather $\Ac \cap S^2$) is shown on the left hand side of Figure \ref{fig:LinExtFiber}.
	On the right hand side of Figure \ref{fig:LinExtFiber} we see its localization at $X$.
	
	Then the topes in the fiber of $B' \in \Tc_X$ are $\Tc(\rho_X\downarrow B') = \{T_0,T_1,T_2\}$
	with $T_0 = \iota_\alpha(B')$ and $T_2 = \iota_\beta(B')$.
	Indeed, $\Tc(\OM,T_0)|_{\Tc(\rho_X\downarrow B')}$ is linearly ordered as $T_0 < T_1 < T_2$.
	Furthermore, $S(T_1,T_2) = \{H_5\} \subseteq S(T_0,T_2) = \{H_4,H_5\}$.
	So $H_5 \in \Ac$ satisfies the second statement of Lemma \ref{lem:ModCoRk1LinExtFiber}.
\end{example}

Now, we consider the primal covector complex $\Lc$.
\begin{lemma}
	\label{lem:ModEltFacesExtension}
	Let $X \in L$ be a modular flat and $\sigma, \tau \in \Lc$ with $z(\sigma)=z(\tau)=Y \in L$
	and $\sigma|_X = \tau|_X$.
	Then $\sigma|_{X\vee Y} = \tau|_{X\vee Y}$.
\end{lemma}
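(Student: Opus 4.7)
The plan is to argue by contradiction. I would suppose the conclusion fails, so there is $e \in (X\vee Y) \setminus Y$ with $\sigma_e \neq \tau_e$; equivalently $e \in S(\sigma,\tau) \cap (X \vee Y)$. The hypotheses $\sigma|_X = \tau|_X$ and $z(\sigma) = z(\tau) = Y$ yield $S(\sigma,\tau) \cap X = \emptyset$ and $S(\sigma,\tau) \cap Y = \emptyset$, so in particular $e \in (X \vee Y) \setminus (X \cup Y)$.

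Next, I would apply the covector elimination axiom (4) of Definition~\ref{def:OMAxioms} at $e$ to produce $\eta \in \Lc$ with $\eta_e = 0$ and $\eta_f = (\sigma \circ \tau)_f$ for every $f \in E \setminus S(\sigma,\tau)$. Writing $Z := z(\eta) \in L$ and using $z(\sigma \circ \tau) = z(\sigma) \cap z(\tau) = Y$, the prescription outside $S(\sigma,\tau)$ forces
\[
Z \cap (E \setminus S(\sigma,\tau)) = Y.
\]
Since $X$ and $Y$ both lie in $E \setminus S(\sigma,\tau)$, this simultaneously gives $Y \subseteq Z$ and $Z \cap X = X \cap Y = X \wedge Y$.

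Now I would invoke modularity through Lemma~\ref{lem:IntervalsModGeomL}. Set
\[
Z' := Z \wedge (X \vee Y) = Z \cap (X \vee Y),
\]
so $Y \subseteq Z' \subseteq X \vee Y$, i.e.\ $Z' \in [Y, X \vee Y]$. Combining with $Z \cap X = X \wedge Y$ yields $p_X(Z') = Z' \wedge X = X \wedge Y = p_X(Y)$, and injectivity of $p_X$ on the interval $[Y, X \vee Y]$ from Lemma~\ref{lem:IntervalsModGeomL} forces $Z' = Y$. But $e \in Z$ (since $\eta_e = 0$) and $e \in X \vee Y$ by assumption, so $e \in Z' = Y$, contradicting $e \notin Y$.

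The main obstacle I anticipate is recognising that $Z' = Z \wedge (X \vee Y)$ is the correct auxiliary flat to feed into the modular-interval isomorphism; once this element is identified, the computation $p_X(Z') = p_X(Y)$ and the resulting collapse $Z' = Y$ are immediate from Lemma~\ref{lem:IntervalsModGeomL}, and everything else reduces to bookkeeping with the separating set and the elimination axiom.
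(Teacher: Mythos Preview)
Your proof is correct and follows essentially the same route as the paper: contradiction, elimination axiom~(4), and then Lemma~\ref{lem:IntervalsModGeomL} to force the zero set back down to $Y$. Your introduction of $Z' = Z \wedge (X \vee Y)$ is in fact slightly more careful than the paper, which asserts $Z \in [Y, X\vee Y]$ directly (tacitly reducing to $E = X \vee Y$); and you use injectivity of $p_X$ where the paper uses the inverse $s_Y$, but these are equivalent by Lemma~\ref{lem:IntervalsModGeomL}.
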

\begin{proof}
	Suppose $\sigma|_{X\vee Y} \neq \tau|_{X\vee Y}$, i.e.\ there is an $e \in X \vee Y$ such that
	$\sigma_e \neq \tau_e \neq 0$. So $e \in S(\sigma, \tau) \cap X\vee Y$.
	By the fourth covector axiom (see Definition~\ref{def:OMAxioms}) there is a $\gamma \in \Lc$ with $\gamma_e = 0$ and
	$\gamma_f = (\sigma\circ \tau)_f = (\tau\circ \sigma)_f = \sigma_f = \tau_f$ for all $f \in X \vee Y \setminus S(\sigma,\tau)$.
	In particular $\gamma|_X = \tau|_X = \sigma|_X$ and $z(\sigma) = Y \subseteq z(\gamma)$. Set $Z := z(\gamma)$.
	Then $Z \in [Y, X\vee Y]$ with $Z > Y$. But we also have $Z \wedge X = Y \wedge X$ by construction.
	Thanks to Lemma~\ref{lem:IntervalsModGeomL}, we get $Z = (Z \wedge X) \vee Y = (Y \wedge X) \vee Y = Y$,
	contradicting $Z > Y$.
\end{proof}

\begin{figure}
	\begin{center}
		\begin{tikzcd}[column sep=20mm, scale cd=1]
%				
%				L \ar[rrr,"p_X"] & & & L_X \\
%				&\Lc^\dual \ar[lu, "z"] \ar[r,"\rho_X", yshift=0.1cm]  &\Lc_X^\dual \ar[l,"{\iota_{\alpha},\iota_{\beta}}", hook', yshift=-0.15cm] \ar[ru,"z"] &  \\
%				&(\Lc^Y)^\dual \ar[r,"\rho_X|",yshift=0.1cm] \ar[u, hook] \ar[ld, "z"] &(\Lc_X^{X \wedge Y})^\dual \ar[l,"\overline{\iota}", yshift=-0.1cm, dashed] \ar[u, hook] \ar[rd, "z"] & \\
%				L^Y \ar[uuu, hook] \ar[rrr, "{p_X| \quad \isom}", yshift=0.1cm] & & & L_X^{X\wedge Y} \ar[lll,"s_Y", yshift=-0.1cm] \ar[uuu, hook]

			L_{X\vee Y} \ar[rrr,"p_X"] & & & L_X \\
			&\Lc_{X\vee Y} \ar[lu, "z"] \ar[r,"\rho_X", yshift=0.1cm]  &\Lc_X \ar[l,"{\iota_{\alpha},\iota_{\beta}}", hook', yshift=-0.15cm] \ar[ru,"z"] &  \\
			&\Lc^Y_{X\vee Y} \ar[r,"{\overline{\rho}_X=\rho_X|_{\Lc^Y_{X\vee Y}}}",yshift=0.1cm] \ar[u, hook] \ar[ld, "z"] &\Lc_X^{X \wedge Y} \ar[l,"\overline{\iota}", yshift=-0.1cm, dashed] \ar[u, hook] \ar[rd, "z"] & \\
			L^Y_{X\vee Y} \ar[uuu, hook] \ar[rrr, "{p_X| \quad \isom}", yshift=0.1cm] & & & L_X^{X\wedge Y} \ar[lll,"s_Y", yshift=-0.1cm] \ar[uuu, hook]
		\end{tikzcd}
	\end{center}
	\caption{Proof of Lemma~\ref{lem:ModEltIntervalsCovectors}.}
	\label{fig:DiagramFaceLatticeGeomLocal_Iso}
\end{figure}

\begin{proposition}
	\label{prop:ModEltCovecRestrictFibers}
	Let $X,Y \in L(\OM) = L$ and let $\sigma \in \Lc_X^{X \wedge Y} \subseteq \Lc_X$.
	Then for a minimal element $\wh{\sigma} \in \rho_X^{-1}(\sigma) \cap \Lc^Y$
	we have $z(\wh{\sigma}) = z(\sigma)\vee_L Y$.
\end{proposition}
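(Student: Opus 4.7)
The plan is to exploit modularity of $X$ (implicit from the running context of Section~\ref{sec:CombModularOMS}) together with the composition axiom for covectors; the crucial lattice input is Lemma~\ref{lem:IntervalsModGeomL}. Throughout, set $Z := z(\sigma) \vee_L Y$.

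First I would establish the easy direction. For any $\wh{\sigma} \in \rho_X^{-1}(\sigma) \cap \Lc^Y$ one has $z(\wh{\sigma}) \supseteq z(\sigma) \cup Y$, hence $z(\wh{\sigma}) \geq Z$ in $L$. Applying Lemma~\ref{lem:IntervalsModGeomL} to $z(\sigma) \in [X\wedge Y, X]$ then yields the pivotal identity $Z \wedge X = z(\sigma)$, since $Z = s_Y(z(\sigma))$ and $p_X$ is the inverse of $s_Y$.

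The key step is to show that if $Z \subsetneq z(\wh{\sigma})$ strictly, then $\wh{\sigma}$ fails to be minimal in the fiber. I would pick any $\gamma \in \Lc$ with $z(\gamma) = Z$ (possible since $Z \in L$) and form the composition $\wh{\sigma}' := \wh{\sigma} \circ \gamma$. The composition axiom gives $\wh{\sigma}' \in \Lc$, and the definition of composition automatically provides $\wh{\sigma} \leq \wh{\sigma}'$ in $\Lc$. One computes $z(\wh{\sigma}') = z(\wh{\sigma}) \cap z(\gamma) = z(\wh{\sigma}) \cap Z = Z$, so $\wh{\sigma}' \ne \wh{\sigma}$ and the inequality $\wh{\sigma} \leq \wh{\sigma}'$ is strict, meaning $\wh{\sigma}'$ is a face of strictly smaller dimension than $\wh{\sigma}$ in the dual cell-complex view.

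To complete the contradiction I would verify $\wh{\sigma}' \in \rho_X^{-1}(\sigma) \cap \Lc^Y$. The containment $Y \subseteq Z = z(\wh{\sigma}')$ is immediate. For $\wh{\sigma}'|_X = \sigma$, the modularity identity $Z \wedge X = z(\sigma)$ gives $X \setminus z(\sigma) \subseteq E \setminus Z$, so for each $e \in X \setminus z(\sigma)$ both $\gamma_e$ and $\wh{\sigma}_e$ are non-zero (the latter because $e \in X$ together with $e \notin z(\sigma) = z(\wh{\sigma}) \cap X$ forces $e \notin z(\wh{\sigma})$). Hence the composition returns $\wh{\sigma}'_e = \wh{\sigma}_e = \sigma_e$; on $e \in z(\sigma)$ both $\wh{\sigma}'_e$ and $\sigma_e$ vanish. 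Thus $\wh{\sigma}'$ lies in the fiber and strictly refines $\wh{\sigma}$ in the covector poset, contradicting the minimality of $\wh{\sigma}$.

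The main obstacle is the sign-vector bookkeeping in verifying $\wh{\sigma}'|_X = \sigma$; everything hinges on the modularity formula $Z \wedge X = z(\sigma)$, which is precisely what prevents any $X$-coordinate of $\wh{\sigma}$ from being inadvertently zeroed when composing with $\gamma$.
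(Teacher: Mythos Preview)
Your argument has a direction error at the decisive step. The composition $\wh{\sigma}'=\wh{\sigma}\circ\gamma$ satisfies $\wh{\sigma}\leq_{\Lc}\wh{\sigma}'$ (strictly, when $z(\wh{\sigma})\supsetneq Z$), exactly as you compute. But in the covector poset one has $0<+$ and $0<-$, so being \emph{minimal} in $\Lc$ means having the \emph{largest} zero set. Exhibiting an element strictly \emph{above} $\wh{\sigma}$ in the fiber therefore shows only that $\wh{\sigma}$ is not \emph{maximal}; it says nothing whatsoever about minimality. Your aside about ``the dual cell-complex view'' is precisely the symptom of this slip: in $\Lc^\dual$ the element $\wh{\sigma}'$ does sit below $\wh{\sigma}$, but the fiber $\rho_X^{-1}(\sigma)\cap\Lc^Y$ is a subposet of $\Lc$, not of $\Lc^\dual$.

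A sanity check makes the gap concrete. In your verification that $\wh{\sigma}'|_X=\sigma$ you invoke the modular identity $Z\wedge X=z(\sigma)$, but the argument does not actually need it: for $e\in X\setminus z(\sigma)$ you only use $\wh{\sigma}_e\neq 0$ (automatic from $\wh{\sigma}|_X=\sigma$), and for $e\in z(\sigma)$ you only use $z(\sigma)\subseteq Z$. Thus, if your reasoning were valid, it would prove the statement for arbitrary $X$. Yet for a generic rank-$3$ oriented matroid on $E=\{1,2,3,4\}$ with the non-modular flat $X=\{1,2\}$, $Y=\{3\}$, and $\sigma=({+}{+})$, one has $Z=\{3\}$ while $\wh{\sigma}=({+}{+}\,0\,0)$ lies in the fiber with $z(\wh{\sigma})=\{3,4\}\supsetneq Z$ and is genuinely minimal there (nothing in the fiber has larger zero set). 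Your construction returns $\wh{\sigma}'=({+}{+}\,0\,\gamma_4)$ strictly above it, leaving the minimality of $\wh{\sigma}$ untouched.

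The paper argues along an entirely different line: rather than composing with an auxiliary covector, it takes the two lifts $\iota_\alpha(\sigma)$ and $\iota_{-\alpha}(\sigma)$ for some $\alpha\in\Lc$ with $z(\alpha)=X$, observes that their separation set is all of $E\setminus X\supseteq Y\setminus(X\wedge Y)$, and then invokes the covector elimination axiom~(4) from Definition~\ref{def:OMAxioms} to manufacture the required $\wh{\sigma}$ directly.
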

\begin{proof}
	If  $Y \subseteq X$ then the statement is trivial.
	So assume $Y \nsubseteq X$.
	Let $\alpha \in \Lc$ with $z(\alpha) = X$ and set $\beta = -\alpha \in \Lc$.
	The maps $\iota_\alpha,\iota_\beta: \Lc_X \to \Lc$ 
	from Remark~\ref{rem:SectionRhoX} give sections to $\rho_X$.
	Note that for all $\sigma \in \Lc_X$ we have 
	$S(\iota_\alpha(\sigma),\iota_\beta(\sigma)) = S(\alpha,\beta) = E \setminus X$.
	Since $Y \setminus X = Y \setminus X\wedge Y \subseteq S(\alpha,\beta)$, 
	the oriented matroid axiom (4) in Definition~\ref{def:OMAxioms} implies the statement.
\end{proof}

With the help of the previous proposition and Lemma~\ref{lem:ModEltFacesExtension} we can give the following refinement of Lemma~\ref{lem:IntervalsModGeomL} 
to covector posets of oriented matroids.

\begin{lemma}
	\label{lem:ModEltIntervalsCovectors}
	Let $X \in L(\OM)$ be modular and $Y \in L(\OM)$ another flat.
	Then the map
	\begin{align*}
		\overline{\rho}_X := \rho_X|_{\Lc^Y_{X\vee Y}}: \Lc_{X\vee Y}^Y &\to \Lc_X^{X\wedge Y}, \\
		\sigma &\mapsto \sigma|_X
	\end{align*}
	is an isomorphism of posets.
	
\end{lemma}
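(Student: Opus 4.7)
The plan is to construct an explicit inverse $\overline{\iota}\colon \Lc_X^{X \wedge Y} \to \Lc_{X\vee Y}^Y$ to $\overline{\rho}_X$ built from the ``minimal'' lifts of Proposition~\ref{prop:ModEltCovecRestrictFibers}. First I would verify that $\overline{\rho}_X$ itself is well defined and order preserving: componentwise restriction of sign vectors automatically respects the partial order, and the identity $z(\sigma|_X) = z(\sigma) \wedge X \geq Y \wedge X = X \wedge Y$ places the image in $\Lc_X^{X \wedge Y}$.

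For the inverse, given $\tau \in \Lc_X^{X \wedge Y}$ I would pick any $\wh{\tau} \in \rho_X^{-1}(\tau) \cap \Lc^Y$ with $z(\wh{\tau}) = z(\tau) \vee Y$; existence is Proposition~\ref{prop:ModEltCovecRestrictFibers}, and $z(\tau) \vee Y$ lies in $[Y, X \vee Y]$ via the modularity isomorphism $p_X$ of Lemma~\ref{lem:IntervalsModGeomL}, so $\wh{\tau}|_{X \vee Y} \in \Lc_{X \vee Y}^Y$. Any two such choices share the zero set $z(\tau) \vee Y$ and the same $X$-restriction, so Lemma~\ref{lem:ModEltFacesExtension} forces them to agree on $X \vee Y$, making $\overline{\iota}(\tau) := \wh{\tau}|_{X \vee Y}$ well defined. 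The composition $\overline{\rho}_X(\overline{\iota}(\tau)) = \wh{\tau}|_X = \tau$ is immediate. For the other direction, given $\sigma \in \Lc_{X\vee Y}^Y$ I would take any preliminary lift $\tilde{\sigma}' \in \Lc$ of $\sigma$ and form $\tilde{\sigma} := \tilde{\sigma}' \circ \eta$ with $\eta \in \Lc$ chosen so that $z(\eta) = z(\sigma)$; a direct check gives $\tilde{\sigma}|_{X \vee Y} = \sigma$ and $z(\tilde{\sigma}) = z(\sigma)$. Since modularity via Lemma~\ref{lem:IntervalsModGeomL} applied to $z(\sigma) \in [Y, X \vee Y]$ yields $z(\sigma|_X) \vee Y = z(\sigma)$, this $\tilde{\sigma}$ is a minimal lift of $\sigma|_X$, and $\overline{\iota}(\sigma|_X) = \tilde{\sigma}|_{X \vee Y} = \sigma$, establishing the bijection.

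Lastly I need $\overline{\iota}$ to be order preserving, upgrading the order-preserving bijection to a genuine poset isomorphism. Given $\tau_1 \leq \tau_2$, the order-preserving sections $\iota_\alpha, \iota_\beta$ from Remark~\ref{rem:SectionRhoX} give the inequalities $\iota_\alpha(\tau_1) \leq \iota_\alpha(\tau_2)$ and $\iota_\beta(\tau_1) \leq \iota_\beta(\tau_2)$ in $\Lc$; the plan is to perform the elimination underlying Proposition~\ref{prop:ModEltCovecRestrictFibers} on the two pairs in a synchronized fashion so that the resulting minimal lifts satisfy $\wh{\tau}_1 \leq \wh{\tau}_2$, whence $\overline{\iota}(\tau_1) \leq \overline{\iota}(\tau_2)$ by componentwise restriction to $X \vee Y$. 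The main obstacle is orchestrating the eliminations cleanly so that comparability survives; alternatively, one can exploit that $z$ is a rank-preserving order-reversing surjection to fiber both posets over the isomorphic intervals $[Y, X \vee Y]$ and $[X \wedge Y, X]$ of Lemma~\ref{lem:IntervalsModGeomL}, reducing the isomorphism check to matching strata of constant zero set.
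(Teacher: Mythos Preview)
Your overall strategy---build the inverse $\overline{\iota}$ via the minimal lifts of Proposition~\ref{prop:ModEltCovecRestrictFibers}, use Lemma~\ref{lem:ModEltFacesExtension} for well-definedness, and invoke Lemma~\ref{lem:IntervalsModGeomL} for the zero-set bookkeeping---is exactly the paper's approach. The verification that $\overline{\rho}_X$ lands in the right place and that both composites are identities is fine (though under the harmless reduction $X\vee Y = E$ your lift $\tilde{\sigma}$ is unnecessary: $\sigma$ is already in $\Lc$).

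The genuine gap is precisely where you flag it: order preservation of $\overline{\iota}$. Your two suggestions (synchronized eliminations, or fibering over the lattice intervals) are not carried out, and the first in particular would be awkward to make rigorous. The paper sidesteps this entirely by reorganizing the argument. It first proves \emph{injectivity} of $\overline{\rho}_X$ directly: if $\sigma_1|_X = \sigma_2|_X$ with $\sigma_i\in\Lc^Y$, then Lemma~\ref{lem:IntervalsModGeomL} forces $z(\sigma_1)=z(\sigma_2)$, and Lemma~\ref{lem:ModEltFacesExtension} then gives $\sigma_1=\sigma_2$. With injectivity in hand, one observes that $\wh{\sigma}$ is the \emph{unique} minimal element---hence the minimum---of the order filter $\rho_X^{-1}((\Lc_X)_{\geq \sigma})\cap\Lc^Y$. (Minimality: anything strictly below $\wh{\sigma}$ in the filter would also map to $\sigma$, contradicting injectivity.) Order preservation is then immediate: for $\sigma\leq\tau$ the filter for $\tau$ sits inside the filter for $\sigma$, so the minimum $\wh{\sigma}$ of the larger filter lies below $\wh{\tau}$. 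This is the missing idea; once you isolate injectivity as a standalone step, the monotonicity of $\overline{\iota}$ drops out without any elimination gymnastics.
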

\begin{proof}
	We may assume without loss of generality that $X\vee Y = E$.
	Firstly, we confirm injectivity.
	Let $\sigma_1,\sigma_2 \in \Lc^Y$ with $\rho_X(\sigma_1)= \sigma_1|_X = \rho_X(\sigma_2)= \sigma_2|_X$.
	By the modularity of $X$ and Lemma~\ref{lem:IntervalsModGeomL} we have that 
	$z(\sigma_i) = Y\vee (z(\sigma_i) \wedge X) = Y \vee z(\rho_X(\sigma_i))$ for $i=1,2$. So $z(\sigma_1) = z(\sigma_2)$
	and $\sigma_1 = \sigma_1|_{X\vee Y} = \sigma_2|_{X\vee Y} = \sigma_2$ by Lemma~\ref{lem:ModEltFacesExtension}.
	Consequently, $\overline{\rho}_X$ is injective.
	
	Secondly, we construct a section $\overline{\iota}:\Lc_X^{X\wedge Y} \to \Lc_{X\vee Y}^Y$ 
	to $\overline{\rho}_X$ which implies the statement.
	By Proposition~\ref{prop:ModEltCovecRestrictFibers} and the commutativity of
	the diagram in Figure~\ref{fig:DiagramFaceLatticeGeomLocal_Iso}, for $\sigma \in \Lc_X^{X \wedge Y}$ 
	we have that 
	\[
	z(\rho_X^{-1}((\Lc_X)_{\geq \sigma}) \cap \Lc^Y) = L^Y_{\leq s_Y(z(\sigma))}.
	\]
	So there is a $\wh{\sigma} \in \rho_X^{-1}((\Lc_X)_{\geq \sigma}) \cap \Lc^Y$ with 
	$z(\wh{\sigma}) = s_Y(z(\sigma)) = Y \vee z(\sigma)$
	and further,  by Lemma~\ref{lem:IntervalsModGeomL}, $z(\overline{\rho}_X(\wh{\sigma})) = z(\sigma)$.
	This implies $\overline{\rho}_X(\wh{\sigma}) = \sigma$. 
	Moreover, by the injectivity of $\overline{\rho}_X$,
	this minimal $\wh{\sigma}$ is the unique minimal element of $\rho_X^{-1}((\Lc_X)_{\geq \sigma}) \cap \Lc^Y$.
	For $\sigma \leq \tau \in \Lc_X^{X\wedge Y}$
	we have $\rho_X^{-1}((\Lc_X)_{\geq \sigma}) \cap \Lc^Y \supseteq \rho_X^{-1}((\Lc_X)_{\geq \tau}) \cap \Lc^Y$
	and thus $\wh{\sigma} \leq \wh{\tau}$ for the unique minimal elements in these order filter of $\Lc^Y$.
	This yields our desired order preserving section $\overline{\iota}$ 
	by setting $\overline{\iota}(\sigma) := \wh{\sigma}$.
\end{proof}

\begin{figure}	
	
	\begin{tikzpicture}[scale=0.6]
		\node (1) at (-1.,3.,0.) {};
		\node (2) at (1.,-3.,0.)  {};
		\node (3) at (-3.,3.,0.) {};
		\node (4) at (3.,-3.,0.) {};
		\node (5) at (1.,3.,-2.) {}; 
		\node (6) at (-1.,-3.,2.) {};
		\node (7) at (-1.,1.,2.) {};
		\node (8) at (1.,-1.,-2.) {};
		\node (9) at (-3.,1.,0.) {};
		\node (10) at (3.,-1.,0.) {};
		\node (11) at (-1.,3.,-2.) {};
		\node (12) at (1.,-3.,2.) {};
		\node (13) at (-3.,1.,2.) {};
		\node (14) at (3.,-1.,-2.) {};
		\node (15) at (3.,1.,-2.) {};
		\node (16) at (-3.,-1.,2.) {}; 
		\node (17) at (1.,-1.,2.) {};
		\node (18) at (-1.,1.,-2.) {};

		\fill[black!20!white] ($(1)$) -- ($(3)$) -- ($(13)$)-- ($(7)$) -- ($(1)$);
		\fill[black!20!white] ($(3)$) -- ($(9)$) -- ($(16)$)-- ($(13)$) -- ($(3)$);
		
		\draw[line width=2pt, black!20!white] ($(1)$) -- ($(3)$) -- ($(13)$)-- ($(7)$) -- ($(1)$);
		\draw[line width=2pt, black!20!white] ($(9)$) -- ($(16)$);
		\draw[line width=2pt, black!20!white] ($(13)$) -- ($(16)$);
		
		\filldraw[black!20!white] (1) circle[radius=5pt];
		\filldraw[black!20!white] (3) circle[radius=5pt];
		\filldraw[black!20!white] (13) circle[radius=5pt];
		\filldraw[black!20!white] (7) circle[radius=5pt];
		\filldraw[black!20!white] (9) circle[radius=5pt];
		\filldraw[black!20!white] (16) circle[radius=5pt];
		
		\node[left, black!60!white] at ($0.5*(3)+0.5*(13)$) {$(\rho_X\downarrow\sigma)$};
		
%		\node at ($(1)$) {\tiny{$1$}};
%		\node at ($(2)$) {\tiny{$2$}};
%		\node at ($(3)$) {\tiny{$3$}};
%		\node at ($(4)$) {\tiny{$4$}};
%		\node at ($(5)$) {\tiny{$5$}};
%		\node at ($(6)$) {\tiny{$6$}};
%		\node at ($(7)$) {\tiny{$7$}};
%		\node at ($(8)$) {\tiny{$8$}};
%		\node at ($(9)$) {\tiny{$9$}};
%		\node at ($(10)$) {\tiny{$10$}};
%		\node at ($(11)$) {\tiny{$11$}};
%		\node at ($(12)$) {\tiny{$12$}};
%		\node at ($(13)$) {\tiny{$13$}};
%		\node at ($(14)$) {\tiny{$14$}};
%		\node at ($(15)$) {\tiny{$15$}};
%		\node at ($(16)$) {\tiny{$16$}};
%		\node at ($(17)$) {\tiny{$17$}};
%		\node at ($(18)$) {\tiny{$18$}};
		
		\filldraw (1) circle[radius=2pt];
		\filldraw (2) circle[radius=2pt];
		\filldraw (3) circle[radius=2pt];
		\filldraw (2) circle[radius=2pt];
		\filldraw (5) circle[radius=2pt];
		\filldraw (6) circle[radius=2pt];
		\filldraw (7) circle[radius=2pt];
		\filldraw (8) circle[radius=2pt];
		\filldraw (9) circle[radius=2pt];
		\filldraw (10) circle[radius=2pt];
		\filldraw (11) circle[radius=2pt];
		\filldraw (12) circle[radius=2pt];
		\filldraw (13) circle[radius=2pt];
		\filldraw (14) circle[radius=2pt];
		\filldraw (15) circle[radius=2pt];
		\filldraw (16) circle[radius=2pt];
		\filldraw (17) circle[radius=2pt];
		\filldraw (18) circle[radius=2pt];
		
%		\fill[black!50!white] ($(1)-(0.1,0.1,0.1)$) -- ($(3)-(0.1,0.1,0.1)$) -- ($(13)-(0.1,0.1,0.1)$)
%			 -- ($(7)-(0.1,0.1,0.1)$) -- ($(1)-(0.1,0.1,0.1)$);
%		\draw[line width=2pt, black!50!white, dashed] ($(1)$) -- ($(3)$);
		
		\draw [shorten <=0.125pt, shorten >=0.125pt] (-1.,3.,0.) -- %
		(-3.,3.,0.); 
		\draw [shorten <=0.125pt, shorten >=0.125pt] (-1.,3.,0.) -- %
		(1.,3.,-2.); 
		\draw [shorten <=0.125pt, shorten >=0.125pt] (-1.,3.,0.) -- %
		(-1.,1.,2.); 
		\draw [shorten <=0.125pt, shorten >=0.125pt] (1.,-3.,0.) -- %
		(3.,-3.,0.); 
		\draw [shorten <=0.125pt, shorten >=0.125pt] (1.,-3.,0.) -- %
		(-1.,-3.,2.); 
		\draw [shorten <=0.125pt, shorten >=0.125pt] (1.,-3.,0.) -- %
		(1.,-1.,-2.); 
		\draw [shorten <=0.125pt, shorten >=0.125pt] (-3.,3.,0.) -- %
		(-3.,1.,0.); 
		\draw [shorten <=0.125pt, shorten >=0.125pt] (-3.,3.,0.) -- %
		(-1.,3.,-2.); 
		\draw [shorten <=0.125pt, shorten >=0.125pt] (-3.,3.,0.) -- %
		(-3.,1.,2.); 
		\draw [shorten <=0.125pt, shorten >=0.125pt] (3.,-3.,0.) -- %
		(3.,-1.,0.); 
		\draw [shorten <=0.125pt, shorten >=0.125pt] (3.,-3.,0.) -- %
		(1.,-3.,2.); 
		\draw [shorten <=0.125pt, shorten >=0.125pt] (3.,-3.,0.) -- %
		(3.,-1.,-2.); 
		\draw [shorten <=0.125pt, shorten >=0.125pt] (1.,3.,-2.) -- %
		(-1.,3.,-2.); 
		\draw [shorten <=0.125pt, shorten >=0.125pt] (1.,3.,-2.) -- %
		(3.,1.,-2.); 
		\draw [shorten <=0.125pt, shorten >=0.125pt] (-1.,-3.,2.) -- %
		(1.,-3.,2.); 
		\draw [shorten <=0.125pt, shorten >=0.125pt] (-1.,-3.,2.) -- %
		(-3.,-1.,2.); 
		\draw [shorten <=0.125pt, shorten >=0.125pt] (-1.,1.,2.) -- %
		(-3.,1.,2.); 
		\draw [shorten <=0.125pt, shorten >=0.125pt] (-1.,1.,2.) -- %
		(1.,-1.,2.); 
		\draw [shorten <=0.125pt, shorten >=0.125pt] (1.,-1.,-2.) -- %
		(3.,-1.,-2.); 
		\draw [shorten <=0.125pt, shorten >=0.125pt] (1.,-1.,-2.) -- %
		(-1.,1.,-2.); 
		\draw [shorten <=0.125pt, shorten >=0.125pt] (-3.,1.,0.) -- %
		(-3.,-1.,2.); 
		\draw [shorten <=0.125pt, shorten >=0.125pt] (-3.,1.,0.) -- %
		(-1.,1.,-2.); 
		\draw [shorten <=0.125pt, shorten >=0.125pt] (3.,-1.,0.) -- %
		(3.,1.,-2.); 
		\draw [shorten <=0.125pt, shorten >=0.125pt] (3.,-1.,0.) -- %
		(1.,-1.,2.); 
		\draw [shorten <=0.125pt, shorten >=0.125pt] (-1.,3.,-2.) -- %
		(-1.,1.,-2.); 
		\draw [shorten <=0.125pt, shorten >=0.125pt] (1.,-3.,2.) -- %
		(1.,-1.,2.); 
		\draw [shorten <=0.125pt, shorten >=0.125pt] (-3.,1.,2.) -- %
		(-3.,-1.,2.); 
		\draw [shorten <=0.125pt, shorten >=0.125pt] (3.,-1.,-2.) -- %
		(3.,1.,-2.); 
		
		\node at (0,-4.5) {$\Lc^\dual \supseteq (\Lc^H)^\dual$};
		
		\node (m1) at ($0.5*(7)+0.5*(13)$) {};
		\node (m2) at ($0.5*(6)+0.5*(12)$) {};
		\node (m3) at ($0.5*(2)+0.5*(4)$) {};
		\node (m4) at ($0.5*(8)+0.5*(14)$) {};
		\node (m5) at ($0.5*(5)+0.5*(11)$) {};
		\node (m6) at ($0.5*(1)+0.5*(3)$) {};
		\filldraw (m1) circle[radius=3pt];
		\filldraw (m2) circle[radius=3pt];
		\filldraw (m3) circle[radius=3pt];
		\filldraw (m4) circle[radius=3pt];
		\filldraw (m5) circle[radius=3pt];
		\filldraw (m6) circle[radius=3pt];
		
		\draw[line width=2pt, dotted] (m1) -- (m2) -- (m3) -- (m4) -- (m5) -- (m6) -- (m1);

%		\draw[black!80!white] ($(m6)$) ellipse [x radius=1.5, y radius=0.2];
%		\draw[line width=1pt] ($(m1)$) ellipse [x radius=1.25, y radius=0.25];
		
		\node (ar) at (5,0,0) {};
		\draw[->] ($(ar)-(0.5,0,0)$) -- ($(ar)+(0.5,0,0)$);
		\node[above] at (ar) {$\rho_X$};
		
		\node (v) at (8.5,0,0) {};;
		
		\node (l1) at (1.7320508075688774,-1.,0) {};
		\node (l2) at (1.7320508075688774,1.,0) {};
		\node (l3) at (0.0,2.,0) {};
		\node (l4) at (-1.7320508075688774,1.,0) {};
		\node (l5) at (-1.7320508075688774,-1.,0) {};
		\node (l6) at (0.0,-2.,0) {};
		
		\draw ($1.25*(l1)+(v)$) -- ($1.25*(l2)+(v)$) -- ($1.25*(l3)+(v)$) -- ($1.25*(l4)+(v)$) -- ($1.25*(l5)+(v)$) -- ($1.25*(l6)+(v)$) -- ($1.25*(l1)+(v)$);
		
		\draw[black!30!white, line width=2pt] ($1.25*(l3)+(v)$) -- ($1.25*(l4)+(v)$);
		
		\node[above left, black!60!white] at ($0.625*(l3)+0.625*(l4)+(v)$) {$\sigma$};

		\filldraw ($1.25*(l1)+(v)$) circle[radius=2pt];
		\filldraw ($1.25*(l2)+(v)$) circle[radius=2pt];
		\filldraw ($1.25*(l3)+(v)$) circle[radius=2pt];
		\filldraw ($1.25*(l4)+(v)$) circle[radius=2pt];
		\filldraw ($1.25*(l5)+(v)$) circle[radius=2pt];
		\filldraw ($1.25*(l6)+(v)$) circle[radius=2pt];

%		\draw[line width=1pt] ($1.25*(l4)+(v)$) circle[radius=0.25];

		\node at (8.5,-4.5) {$\Lc_X^\dual$};
		
	\end{tikzpicture}
	\caption{The dual covector complex $\Lc^\dual = \Lc^\dual(\Ac)$ of the arrangement $\Ac$, and its contraction $(\Lc^H)^\dual$, 
		which is mapped isomorphically to the localization $\Lc_X^\dual$ at a modular flat $X$.}
	\label{fig:ModEltCovectorsIso}
\end{figure}
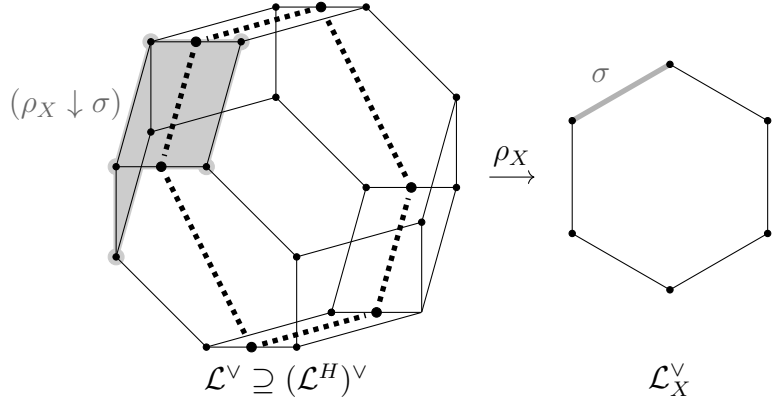

We visualize Lemma~\ref{lem:ModEltIntervalsCovectors} with an example.
\begin{example}
	\label{ex:ModEltCovectorsIso}
	Let $\Ac$ be the hyperplane arrangement in $\RR^3$ consisting of the hyperplanes
	\begin{align*}
		\Ac = \{ &H_1=\ker(x),H_2=\ker(y),H_3=\ker(x+y),\\
		&H_4=\ker(z),H_5=\ker(x+z)\}		
	\end{align*}
	and $\OM = \OM(\Ac)$ the associated oriented matroid with covectors $\Lc = \Lc(\Ac)$.
	Then the flat $X = \{H_1,H_2,H_3\} \in L(\OM)$ is modular.
	The dual covector complex $\Lc^\dual$, the localization $\Lc_X^\dual$ and the map $\rho_X:\Lc^\dual \to \Lc_X^\dual$ 
	together with a poset fiber $(\rho_X\downarrow \sigma)$ are shown in Figure~\ref{fig:ModEltCovectorsIso}.
	The dual of the restricted covector complex $(\Lc^H)^\dual$ to $H=\{H_4\} \in L(\OM)$
	is given by the dotted subcomplex transversely to the cells in $z^{-1}(L^H) \isom (\Lc^H)^\dual$.
	We clearly see that the localization map $\rho_X$, restricted to $(\Lc^H)^\dual$ is an isomorphism
	in accordance with Lemma~\ref{lem:ModEltIntervalsCovectors}.
\end{example}

%%%%%%%%%%%%%%%%%%%%%%%%%%%%%%%%%%%%%%%%%%%%%%%%%%%%%%%%%%%%%%%%%%%%%%%%%%%%%%%%%%%%%%%%%%%%%%%%%%%%
%%%%%%%%%%%%%%%%%%%%%%%%%%%%%%%%%%%%%%%%%%%%%%%%%%%%%%%%%%%%%%%%%%%%%%%%%%%%%%%%%%%%%%%%%%%%%%%%%%%%

\section{A stratification of subcomplexes of the Salvetti complex}
\label{sec:StratSalvettiFibers}

Again, let $X \in L$ be a modular flat of corank $1$.
We now describe a special stratification of the poset fiber $(\wt{\rho}_X\downarrow(\Zero,B'))$ for $B' \in \Tc_X$.
This is analogous to a result by Delucchi \cite[Lem.~4.17,Lem.~4.18]{Delucchi2008_ShellableSalvetti}.
Unfortunately, Delucchi's more general statement is not correct, cf.\ \cite{DelucchiMuecksch2022_Erraturm-ShellableSalvetti}.

Recall that by Lemma~\ref{lem:ModCoRk1LinExtFiber} we have a natural linear order on the subset of topes $\Tc(\rho_X\downarrow B')$,
say $\Tc(\rho_X\downarrow B') = \{T_0 < T_1 < ... < T_k\}$.
Then it also follows from Lemma~\ref{lem:ModCoRk1LinExtFiber} that the order filter
\[
J_i := \{X \in L \mid S(T_j,T_i)\cap X \neq \emptyset\text{ for all }j<i\} \subseteq L
\] 
is principal, i.e.\
\[
%	J_i = L^{S(T_{i-1},T_i)} = \{ X \in L \mid S(T_{i-1},T_i) \subseteq X\}.
J_i = \begin{cases}
	L &\text{if } i=0,\\
	L^{S(T_{i-1},T_i)} &\text{ if } 1\leq i \leq k,
\end{cases}
\]
where $S(T_{i-1},T_i)$ consists of a single element in $E \setminus X$.

Now, we define locally closed subcomplexes $N_i \subseteq (\wt{\rho}_X\downarrow(\Zero,B'))$ by
\[
N_i := \Sc_{\leq (\Zero,T_i)} \setminus (\bigcup_{j<i} \Sc_{\leq (\Zero,T_j)}).
\]
Note that $(\wt{\rho}_X\downarrow(\Zero,B')) = \bigsqcup_{0\leq i \leq k}N_i$.

Furthermore, we have the following useful lemma.

\begin{lemma}
	\label{lem:WTRhoFiberStrat}
	For all $0\leq i \leq k$ we have $N_i \cong (\Lc^{S(T_{i-1},T_i)})^\dual$.
\end{lemma}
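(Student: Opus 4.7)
The plan is to transfer the question to the dual covector complex via Lemma~\ref{lem:SalPrincipialIdealCovectors} and then translate membership in each $\Sc_{\leq(\Zero,T_j)}$ into a condition on zero sets. Throughout I write $e_i$ for the unique element of $S(T_{i-1},T_i) \subseteq E \setminus X$ for $i \geq 1$, and treat the case $i=0$ separately (where $N_0 = \Sc_{\leq(\Zero,T_0)} \cong \Lc^\dual = (\Lc^\emptyset)^\dual$ under the convention $J_0 = L$).

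First, I would use the isomorphism of Lemma~\ref{lem:SalPrincipialIdealCovectors} to identify $\Sc_{\leq(\Zero,T_i)}$ with $\Lc^\dual$, sending $F \leftrightarrow (F, F\circ T_i)$. Under this identification, an element $(F, F\circ T_i)$ lies in $\Sc_{\leq(\Zero,T_j)}$ (for $j<i$) if and only if $F\circ T_j = F\circ T_i$. Unwinding the definition of composition, this happens exactly when $z(F) \cap S(T_j,T_i) = \emptyset$: on the support of $F$ both compositions agree with $F$, while on $z(F)$ they take the values $(T_j)_f$ and $(T_i)_f$, which must therefore coincide.

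Next, I take the complement: the element indexed by $F \in \Lc$ lies in $N_i$ precisely when $z(F) \cap S(T_j,T_i) \neq \emptyset$ for every $j<i$, i.e.\ $z(F) \in J_i$. Here I invoke the principal-filter computation from the excerpt, which rests on Lemma~\ref{lem:ModCoRk1LinExtFiber}: since $e_i$ lies in $S(T_j,T_i)$ for every $j<i$ (second part of that lemma), taking $j=i-1$ already forces $e_i \in z(F)$, and conversely $e_i \in z(F)$ guarantees the intersection condition for all $j<i$. Therefore $N_i$ corresponds to the set $\{F \in \Lc \mid S(T_{i-1},T_i) \subseteq z(F)\}$, viewed as a subposet of $\Lc^\dual$.

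Finally, by Remark~\ref{rem:CovectorsContraction}, this subset is nothing but $z^{-1}(L^{S(T_{i-1},T_i)}) \cong \Lc^{S(T_{i-1},T_i)}$ as a subposet of $\Lc$; taking duals yields $N_i \cong (\Lc^{S(T_{i-1},T_i)})^\dual$ as posets, completing the proof. The only mild subtlety in this argument is making sure the order reversals line up correctly (the Salvetti order reverses the first coordinate, matching the dual on $\Lc$), and pinpointing that the modularity-plus-corank-one hypothesis enters entirely through the principal-filter description of $J_i$ coming from Lemma~\ref{lem:ModCoRk1LinExtFiber}; everything else is purely a bookkeeping exercise with sign vectors.
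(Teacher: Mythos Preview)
Your proposal is correct and follows essentially the same route as the paper: the paper's proof is just the one-line observation that $J_i$ is a principal order filter together with a pointer to \cite[Lem.~4.18]{Delucchi2008_ShellableSalvetti} and \cite[Lem.~4.12]{LofanoPaolini2021_EuclideanMatchings}, and what you have written is precisely the unpacking of that reference. Your translation of $(F,F\circ T_i)\in\Sc_{\leq(\Zero,T_j)}$ into $z(F)\cap S(T_j,T_i)=\emptyset$ via Lemma~\ref{lem:SalPrincipialIdealCovectors}, and your use of the second clause of Lemma~\ref{lem:ModCoRk1LinExtFiber} to reduce the condition ``$z(F)\in J_i$'' to ``$e_i\in z(F)$'', are exactly the computations those cited lemmas perform.
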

\begin{proof}
	This follows from the fact that $J_i$ is a principal order filter.
	See the proof of \cite[Lem.~4.18]{Delucchi2008_ShellableSalvetti} or \cite[Lem.~4.12]{LofanoPaolini2021_EuclideanMatchings}.
\end{proof}

%%%%%%%%%%%%%%%%%%%%%%%%%%%%%%%%%%%%%%%%%%%%%%%%%%%%%%%%%%%%%%%%%%%%%%%%%%%%%%%%%%%%%%%%%%%%%%%%%%%%
%%%%%%%%%%%%%%%%%%%%%%%%%%%%%%%%%%%%%%%%%%%%%%%%%%%%%%%%%%%%%%%%%%%%%%%%%%%%%%%%%%%%%%%%%%%%%%%%%%%%

\section{Poset quasi-fibrations}
\label{sec:QullenThmB}

We introduce a new class of poset maps with a desirable topological property.

Firstly, we record the following theorem from Quillen's foundational
work \cite{Quillen1973_KTheory1} on algebraic $K$-Theory, originally expressed in terms
of small categories and functors.
We state it in its special incarnation in terms of posets and order preserving maps.

\begin{theorem}[Theorem B for posets {\cite[pp.~89]{Quillen1973_KTheory1}}]
	\label{thm:TheoremB}
	Let $f:P \to Q$ be an order preserving map between posets.
	If for all $a \leq b$ $(a,b \in Q)$ the inclusion
	$(f\downarrow a) = f^{-1}(Q_{\leq a}) \hookrightarrow f^{-1}(Q_{\leq b}) = (f\downarrow b)$ is a homotopy equivalence,
	then for $x \in P$ with $f(x)=a$ the homotopy fiber of 
	$F(|\Delta(f)|,a)$ is homotopy equivalent to $|\Delta(f\downarrow a)|$.
	
	Consequently, we have a long exact homotopy sequence
%	\begin{align*}
%	\ldots \to \pi_{i+1}(|\Delta(Q)|,a) &\to \pi_{i}(|\Delta(f\downarrow a)|,x) \\
%		\to \pi_{i}(|\Delta(P)|,x) &\to \pi_{i}(|\Delta(Q)|,a) \to \ldots.
%	%		\to \pi_{i-1}(N(F\downarrow y) \to\ldots 
%	\end{align*}
	
	\begin{center}
		\begin{tikzcd}
			&\ldots \ar[r] &\pi_{i+1}(|\Delta(Q)|,a) \ar[r] &\pi_{i}(|\Delta(f\downarrow a)|,x) \\
			 &	\ar[r, "\pi_i(|\Delta(j)|)"] &\pi_{i}(|\Delta(P)|,x) \ar[r, "\pi_i(|\Delta(f)|)"] &\pi_{i}(|\Delta(Q)|,a) \ar[r] &\ldots
		\end{tikzcd}
	\end{center}
	where $j:(f\downarrow a) \hookrightarrow P$ is the natural inclusion.
%	
%	
%		Let $f:C \to D$ be a functor between small categories. If for all morphisms $\varphi:x \to y$ in $D$
%		the induced functor of the comma categories 
%		$\varphi_*: (f\downarrow x) \to (f \downarrow y)$
%		induces a [weak] homotopy equivalence, 
%		%	then $NF: NC \to ND$ is a quasi fibration with fiber  homotopy equivalent
%		%	to any $N(F\downarrow y)$ $(y=F(x) \in D)$.  
%		then the homotopy fiber $F(Bf,y)$ of $f$ is [weakly] homotopy equivalent to $B(f \downarrow y)$.
%		
%		Consequently, we have a long exact homotopy sequence 
%		\[ 
%		\ldots \to \pi_{i+1}(BD,y) \to \pi_{i}(B(f\downarrow y)) \to \pi_{i}(BC,x) \to \pi_{i}(BD,y)) \to \ldots
%		%		\to \pi_{i-1}(N(F\downarrow y) \to\ldots 
%		\]
\end{theorem}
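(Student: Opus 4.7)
The plan is to deduce this directly from Quillen's original Theorem~B \cite{Quillen1973_KTheory1}, which is stated for arbitrary small categories and functors, by specializing to the case that the categories are posets and the functor is order preserving. No genuinely new topology is needed; the content is a translation of the categorical statement into the poset language used here.

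First I would recall the standard identification of a poset $P$ with the small category whose objects are the elements of $P$ and which has a unique morphism $x \to y$ precisely when $x \leq_P y$. Under this identification the order complex $\Delta(P)$ is naturally the nerve of the associated category, so $|\Delta(P)|$ coincides with the classifying space $BP$, and an order preserving map $f: P \to Q$ becomes a functor inducing $|\Delta(f)|$ on classifying spaces. Next I would verify that the Quillen comma category $(f \downarrow a)$, whose objects are pairs $(x, \phi)$ with $x \in P$ and $\phi$ a morphism from $f(x)$ to $a$ in $Q$, matches the principal preimage $f^{-1}(Q_{\leq a})$ as a poset: in a poset such a $\phi$ is uniquely determined by its source and target, so the objects correspond bijectively to elements $x \in P$ with $f(x) \leq_Q a$, and the morphisms to the induced order. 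Hence the nerve of the categorical fiber is precisely $\Delta(f \downarrow a)$ in the notation used here.

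Under this dictionary, the hypothesis that every inclusion $(f \downarrow a) \hookrightarrow (f \downarrow b)$ for $a \leq_Q b$ is a homotopy equivalence is exactly the hypothesis of Quillen's Theorem~B applied to $f$; the conclusion on the homotopy fiber of $|\Delta(f)|$ and the long exact homotopy sequence then follow by direct invocation. The only subtle point to watch is the bookkeeping of the direction of the comma morphism $\phi: f(x) \to a$, which must correspond to $f(x) \leq_Q a$ rather than the reverse, so that the categorical fiber and the order-theoretic fiber $f^{-1}(Q_{\leq a})$ genuinely coincide; once this is fixed, no further work is required beyond invoking Quillen's theorem.
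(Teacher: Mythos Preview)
Your proposal is correct and is exactly the intended justification: the paper does not supply its own proof of this statement but simply cites Quillen's Theorem~B, and the specialization you outline (posets as categories, order complex as nerve, comma category $(f\downarrow a)$ identified with $f^{-1}(Q_{\leq a})$) is the standard dictionary that makes the citation apply. There is nothing further to compare.
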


Theorem \ref{thm:TheoremB} motivates the following notion.	
\begin{definition}
	Let $f:P \to Q$ be a poset map such that for all $a \leq b$ $(a,b \in Q)$
	the inclusion $(f\downarrow a) \hookrightarrow (f\downarrow b)$ 
	is a homotopy equivalence.
	Then $f$ is called a \emph{poset quasi-fibration}.
\end{definition}

Invoking a homotopy colimit construction \cite[Prop.~3.12 \& Rem.]{WZZ1999_HomotopyColimComb}, 
every poset quasi-fibration can be combinatorially ``turned into'' 
a proper quasi-fibration in the sense of Dold and Thom \cite{DoldThom1958_Quasifaserungen} (after simplicial realization) 
with fibers homotopy equivalent to the poset fibers of the original order preserving map.

%%%%%%%%%%%%%%%%%%%%%%%%%%%%%%%%%%%%%%%%%%%%%%%%%%%%%%%%%%%%%%%%%%%%%%%%%%%%%%%%%%%%%%%%%%%%%%%%%%%%
%%%%%%%%%%%%%%%%%%%%%%%%%%%%%%%%%%%%%%%%%%%%%%%%%%%%%%%%%%%%%%%%%%%%%%%%%%%%%%%%%%%%%%%%%%%%%%%%%%%%	

\section{Acyclic matchings for subcomplexes and the main result}
\label{sec:MatchingsAndPosetQFibrations}

Let $\OM = (E,\Lc)$ be an oriented matroid and $L := L(\OM)$ its geometric lattice.

Firstly, we derive certain acyclic matchings on the dual covector complex $\Lc^\dual$.
Recall from Definition \ref{def:ConvexSubsets} the subcomplex $\Lc^\dual[\Qc]$ for a convex subset $\Qc \subseteq \Tc$ of topes.

\begin{proposition}
	\label{prop:MatchingsCovectorShellable}
	Let $\emptyset \neq \Qc \subseteq \Tc$ be a convex subset and $\Lc^\dual[\Qc] \subseteq \Lc^\dual$
	its associated dual subcomplex.
	Then there is an acyclic matching $\mtc$ on $\Lc^\dual$ with $C(\mtc) = \Lc^\dual[\Qc]$.
\end{proposition}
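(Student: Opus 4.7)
The plan is to build the matching on the primal poset $\Lc$ and then transfer it to $\Lc^\dual$ via the standard correspondence: an acyclic matching on $P$ induces one on $P^\dual$ with the same critical set. Setting $\overline{\Qc} := \Tc \setminus \Qc$, Remark~\ref{rem:ConvexSubsetsDualCpx} lets us identify $\Lc^\dual[\Qc]$, viewed as a subset of $\Lc$, with $\Lc \setminus \Lc(\overline{\Qc})$; hence it suffices to produce an acyclic matching on $\Lc$ whose matched set is exactly $\Lc(\overline{\Qc})$. The case $\overline{\Qc} = \emptyset$ is handled by the empty matching, so I assume $\overline{\Qc}\neq\emptyset$.

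By Corollary~\ref{coro:ShellableComplTConvex}, $\Lc(\overline{\Qc})\setminus\{\Zero\}$ is a shellable $(r-1)$-ball, with a shelling obtained from the opposite of a linear extension of $\Tc(\OM,-B)$ for some chosen $B \in \Qc$. Applying Proposition~\ref{propo:ShellingMatchingBall} to this ball, with $v$ any vertex (cocircuit) contained in its first shelling cell, yields an acyclic matching $\mtc_0$ with $C(\mtc_0)=\{v\}$. I then set
\[
\mtc := \mtc_0 \cup \{(\Zero, v)\},
\]
a valid matching on $\Lc$ since $\Zero \lessdot v$. By construction its matched elements cover all of $\Lc(\overline{\Qc})$, so $C(\mtc)=\Lc^\dual[\Qc]$ exactly as required, and after dualizing we obtain the desired matching on $\Lc^\dual$.

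The substantive work is to verify acyclicity of $\mtc$ on $\Lc$. First, any directed cycle in $G(\Lc,\mtc)$ must stay inside $\Lc(\overline{\Qc})$: all reversed edges of $\mtc$ live there, so entries and exits at a cell of $\Lc^\dual[\Qc]$ along a putative cycle must both be cover edges; but since $\Lc(\overline{\Qc})$ is an order ideal, any cover exiting $\Lc^\dual[\Qc]$ has its target still in $\Lc^\dual[\Qc]$, which would force a purely cover-edged cycle, impossible by the rank function. Second, within $\Lc(\overline{\Qc})$ any cycle must pass through $\Zero$, or else it would contradict the acyclicity of $\mtc_0$. Third, such a cycle would enter $\Zero$ through the unique reversed edge $v \to \Zero$ and leave along some cover $\Zero \to u$ with $u\neq v$ an atom of $\Lc$, so it would require a directed path from $u$ to $v$ in $G(\Lc(\overline{\Qc})\setminus\{\Zero\},\mtc_0)$. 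The hard part --- and the only real obstacle --- is ruling this out; it is accomplished by observing that $v$ is a \emph{source} of that graph, since as an atom of $\Lc$ it has no incoming cover edges in $\Lc\setminus\{\Zero\}$ and as the critical cell of $\mtc_0$ it has no incoming reversed matching edges either. Thus no such path exists, $\mtc$ is acyclic, and the proof is complete.
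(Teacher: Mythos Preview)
Your proof is correct and follows essentially the same strategy as the paper's own argument: both obtain an acyclic matching on the shellable ball $\Lc(\overline{\Qc})\setminus\{\Zero\}$ via Proposition~\ref{propo:ShellingMatchingBall}, add the single extra pair matching $\Zero$ with the surviving critical vertex $v$, and dualize. The only cosmetic difference is the order of operations---the paper first extends trivially to all of $\Lc$ (invoking the Patchwork Theorem~\ref{thm:Patchwork}), dualizes, and then adds $(v,\Zero)$ in $\Lc^\dual$, justifying acyclicity in one line from the fact that $\Zero$ is the unique maximum of $\Lc^\dual$; you instead add $(\Zero,v)$ on the primal side first and spell out the acyclicity argument by hand, which amounts to the same observation (your ``$v$ is a source'' is the dual of the paper's implicit ``$v$ is a sink'').
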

\begin{proof}
	By Corollary~\ref{coro:ShellableComplTConvex} the subcomplex $\Lc(\Tc\setminus\Qc)$ is a shellable ball.
	Hence, by Proposition~\ref{propo:ShellingMatchingBall} there is an acyclic matching $\mtc'$ on $\Lc(\Tc\setminus\Qc)$
	whose critical cells consist of only one vertex of $v \in \Lc(\Tc\setminus\Qc)$. 
	Since $\Lc(\Tc\setminus\Qc)$ is an order ideal in $\Lc$, the matching $\mtc'$ can be extended to a matching $\mtc$
	on $\Lc$ by a trivial matching on the order filter $\Lc \setminus \Lc(\Tc\setminus\Qc)$ (e.g.\ use Theorem~\ref{thm:Patchwork}).
	Hence, this new matching $\mtc$ has critical cells: $C(\mtc) = \Lc \setminus \Lc(\Tc\setminus\Qc) \sqcup \{v\}$.
	By Remark~\ref{rem:ConvexSubsetsDualCpx} we have $(\Lc \setminus \Lc(\Tc\setminus\Qc))^\dual = \Lc^\dual[\Qc]$. 
	We can extend the matching $\mtc^\dual$ on $\Lc^\dual$ by one further matched pair $(v,\Zero)$,
	since $\Zero \in \Lc^\dual$ is the unique maximal element of $\Lc^\dual$ and $v \lessdot_{\Lc^\dual} \Zero$.
	This gives the claimed matching on $\Lc^\dual$.
\end{proof}

Let $X \in L$ be some flat and $\rho_X:\Lc^\dual \to \Lc_X^\dual$ the associated localization map. 
From the preceding proposition we immediately get the following.

\begin{corollary}
	\label{coro:MatchingFiberRho}
	Let $\sigma \in \Lc_X^\dual$. Then there is an acyclic matching on $\Lc^\dual$ with $C(\mtc) = (\rho_X\downarrow\sigma)$.
\end{corollary}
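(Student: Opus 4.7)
The plan is to reduce the corollary directly to Proposition~\ref{prop:MatchingsCovectorShellable} by exhibiting the poset fiber $(\rho_X\downarrow\sigma)$ as a subcomplex of the form $\Lc^\dual[\Qc]$ for a convex subset of topes $\Qc$. The natural candidate is $\Qc := \Tc(\rho_X\downarrow\sigma)$, which is convex by Lemma~\ref{lem:FiberPosetTopesConvex}, and is nonempty because any tope of $\Lc_X$ above $\sigma$ can be lifted to a tope of $\Lc$ via a section $\iota_\alpha$ as in Remark~\ref{rem:SectionRhoX}.

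The heart of the argument is then the identification $(\rho_X\downarrow\sigma) = \Lc^\dual[\Qc]$. By unwinding definitions, $\tau \in (\rho_X\downarrow\sigma)$ means $\tau|_X \geq_{\Lc_X} \sigma$, i.e.\ $\tau_e = \sigma_e$ for every $e \in X$ with $\sigma_e \neq 0$, whereas $\tau \in \Lc^\dual[\Qc]$ means that every tope $T \geq_\Lc \tau$ restricts to an extension of $\sigma$. The inclusion $(\rho_X\downarrow\sigma) \subseteq \Lc^\dual[\Qc]$ is immediate, since $\tau \leq T$ implies $\tau|_X \leq T|_X$. For the reverse inclusion, suppose $\tau|_X \not\geq \sigma$ and pick $e \in X$ with $\sigma_e \neq 0$ and $\tau_e \neq \sigma_e$. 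If $\tau_e = -\sigma_e$, every tope above $\tau$ already lies outside $\Qc$. If $\tau_e = 0$, choose any tope $\eta \in \Tc$ with $\eta_e = -\sigma_e$ (available since $e$ is not a loop); then $\tau \circ \eta$ is a tope above $\tau$ with $(\tau\circ\eta)_e = -\sigma_e$, hence outside $\Qc$. Either way $\Tc(\tau) \not\subseteq \Qc$, so $\tau \notin \Lc^\dual[\Qc]$.

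With this identification in hand, applying Proposition~\ref{prop:MatchingsCovectorShellable} to $\Qc = \Tc(\rho_X\downarrow\sigma)$ yields an acyclic matching $\mtc$ on $\Lc^\dual$ whose critical cells coincide exactly with $(\rho_X\downarrow\sigma)$, as desired. The only slightly delicate point is the composition argument for the $\tau_e = 0$ case in the set-theoretic identification; everything else is a direct application of the already-established machinery.
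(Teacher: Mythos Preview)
Your proof is correct and follows exactly the same route as the paper: apply Proposition~\ref{prop:MatchingsCovectorShellable} to the convex set $\Qc = \Tc(\rho_X\downarrow\sigma)$ (convexity via Lemma~\ref{lem:FiberPosetTopesConvex}) after identifying $(\rho_X\downarrow\sigma) = \Lc^\dual[\Qc]$. The paper merely asserts this last identification as an ``observation'', whereas you spell it out carefully, including the case split on $\tau_e$; your argument there is sound (in particular, $\tau\circ\eta$ is indeed a tope since $\eta$ is).
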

\begin{proof}
	This readily follows from Proposition~\ref{prop:MatchingsCovectorShellable}, the convexity of the subset of topes $\Tc(\rho_X\downarrow\sigma)$
	thanks to Lemma~\ref{lem:FiberPosetTopesConvex} and the observation that we
	have $\Lc^\dual[\Tc(\rho_X\downarrow\sigma)] = (\rho_X\downarrow\sigma)$.
\end{proof}

\bigskip
\noindent
Now assume that $X$ is modular and of corank $1$.
Recall the map $\wt{\rho}_X:\Sc \to \Sc_X, (\sigma,T) \mapsto (\rho_X(\sigma),\rho_X(T))$
between the Salvetti complexes.
Next, we derive special acyclic matchings on poset fibers of $\wt{\rho}_X$ 
of maximal elements of $\Sc_X$ which yield a homotopy equivalence.

\begin{figure}
	\begin{tikzcd}
		\Sc \ar[r,"\wt{\rho}_X"] & \Sc_X \\
		N_i \ar[u,hook] \ar[r,"\wt{\rho}_X|_{N_i}"] \ar[d,"\isom"] 
			&(\Sc_X)_{\leq (\Zero,B')} \ar[u,hook] \ar[d,"\isom"] \\
		(\Lc^{S(T_{i-1},T_i)})^\dual \ar[r,"\rho_X|"] \ar[d,hook] & \Lc_X^\dual \\
		\Lc^\dual \ar[ur, "\rho_X"] 		
	\end{tikzcd}
	\caption{Restriction of $\wt{\rho}_X$ to $N_i$ can be identified with the restriction of $\rho_X$.}
	\label{fig:RestrictionWTRhoNi}
\end{figure}

\begin{proposition}
	\label{prop:AcyclicMatchingFibersSal}
	Let $a\in \Sc_X$ and let $B' \in \Tc_X$ with $a\leq(\Zero,B')$. 
	Then there is an acyclic matching $\mtc$ on $(\wt{\rho}\downarrow (\Zero,B'))$
	with $C(\mtc) = (\wt{\rho}\downarrow a)$.
	In particular, $(\wt{\rho}\downarrow a)$ is a strong deformation retract of $(\wt{\rho}\downarrow (\Zero,B'))$
	and $(\wt{\rho}\downarrow a) \hookrightarrow (\wt{\rho}\downarrow (\Zero,B'))$ is a homotopy equivalence.
\end{proposition}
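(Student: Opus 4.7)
The plan is to construct an acyclic matching $\mtc$ on $(\wt{\rho}_X\downarrow (\Zero,B'))$ with critical cells $C(\mtc) = (\wt{\rho}_X\downarrow a)$; the strong deformation retract and the homotopy equivalence claims then follow immediately from Theorem~\ref{thm:MainThmDMTRegularSubcomplex}.

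I would use the stratification $(\wt{\rho}_X\downarrow(\Zero,B')) = \bigsqcup_{i=0}^k N_i$ from Section~\ref{sec:StratSalvettiFibers} together with the Patchwork Theorem~\ref{thm:Patchwork}. First I check that the map $g\colon (\wt{\rho}_X\downarrow(\Zero,B')) \to \{0 < 1 < \cdots < k\}$ sending $x$ to the unique $i$ with $x \in N_i$ is order-preserving: if $x \leq y$ with $x \in N_j$ and $y \in N_i$, then $x \leq y \leq (\Zero,T_i)$ forces $x \in \Sc_{\leq (\Zero,T_i)}$, so by minimality of $j$ in the definition of $N_j$ we get $j \leq i$. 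This reduces the problem to constructing, for each $i$, an acyclic matching $\mtc_i$ on $N_i$ whose critical cells are exactly $N_i \cap (\wt{\rho}_X\downarrow a)$.

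Let $\tau \in \Lc_X^\dual$ be the first coordinate of $a$ under the isomorphism $(\Sc_X)_{\leq (\Zero,B')} \cong \Lc_X^\dual$ from Lemma~\ref{lem:SalPrincipialIdealCovectors}. For $i=0$, Lemma~\ref{lem:WTRhoFiberStrat} and the diagram of Figure~\ref{fig:RestrictionWTRhoNi} identify $\wt{\rho}_X|_{N_0}$ with $\rho_X\colon \Lc^\dual \to \Lc_X^\dual$, so $N_0 \cap (\wt{\rho}_X\downarrow a)$ corresponds to $(\rho_X\downarrow \tau)$, and Corollary~\ref{coro:MatchingFiberRho} provides the required matching on $N_0$ at once. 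For $i\geq 1$, Lemma~\ref{lem:WTRhoFiberStrat} gives $N_i \cong (\Lc^{\{e_i\}})^\dual$ where $\{e_i\} = S(T_{i-1},T_i) \subseteq E\setminus X$, and since $X$ is modular of corank one we have $X \wedge \{e_i\} = \emptyset$ and $X \vee \{e_i\} = E$. Hence Lemma~\ref{lem:ModEltIntervalsCovectors} upgrades $\rho_X|_{(\Lc^{\{e_i\}})^\dual}\colon (\Lc^{\{e_i\}})^\dual \to \Lc_X^\dual$ to a poset isomorphism, under which $N_i \cap (\wt{\rho}_X\downarrow a)$ is identified with the principal ideal $(\Lc_X^\dual)_{\leq \tau}$.

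It therefore suffices to produce on $\Lc_X^\dual$ an acyclic matching whose critical cells are $(\Lc_X^\dual)_{\leq \tau}$. For this I apply Proposition~\ref{prop:MatchingsCovectorShellable} to $\OM_X$ with the convex subset of topes $\Qc := \{T' \in \Tc_X \mid T' \geq \tau\} = \bigcap_{e\in X\setminus z(\tau)}(\Tc_X)_e^{\tau_e}$; the intersection property recorded in Remark~\ref{rem:ConvexSubsetsDualCpx} gives $\Lc_X^\dual[\Qc] = (\Lc_X^\dual)_{\leq \tau}$. Transporting the resulting matching back to $N_i$ through the isomorphism of the previous paragraph, and combining all of the $\mtc_i$ via Theorem~\ref{thm:Patchwork}, yields $\mtc$ with $C(\mtc) = \bigsqcup_i N_i \cap (\wt{\rho}_X\downarrow a) = (\wt{\rho}_X\downarrow a)$. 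The main obstacle is the identification of critical cells on the higher strata: this is exactly where the corank-one modularity of $X$ enters essentially, via Lemma~\ref{lem:ModEltIntervalsCovectors}. Everything else is routine assembly of matching results already in hand.
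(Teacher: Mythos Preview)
Your proof is correct and follows essentially the same route as the paper: stratify $(\wt{\rho}_X\downarrow(\Zero,B'))$ into the pieces $N_i$, check that the index map is order-preserving, identify $\wt{\rho}_X|_{N_i}$ with $\rho_X$ (an isomorphism for $i\geq 1$ by Lemma~\ref{lem:ModEltIntervalsCovectors}), build matchings on each stratum with the right critical set, and patch. The only cosmetic difference is that for $i\geq 1$ you invoke Proposition~\ref{prop:MatchingsCovectorShellable} on $\OM_X$ directly, whereas the paper phrases both cases uniformly via Corollary~\ref{coro:MatchingFiberRho}; these amount to the same thing.
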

\begin{proof}
	Firstly, let $T_0<T_1<\ldots<T_k$ be a linear order on the set of topes $\Tc(\rho_X\downarrow B')$ as described by Lemma~\ref{lem:ModCoRk1LinExtFiber}.
	We utilize the stratification of $(\wt{\rho}_X\downarrow (\Zero,B')) = \bigsqcup_{0\leq i \leq k}N_i$
	from Section~\ref{sec:StratSalvettiFibers}.
	Note that each $N_i \subseteq (\wt{\rho}_X\downarrow (\Zero,B'))$ is an order filter
	in $\Sc(\{(\Zero,T_0),\ldots,(\Zero,T_i)\})$. 
	Thus, we have a natural order preserving map 
	\[
		f:(\wt{\rho_X}\downarrow (\Zero,B')) \to \{T_0 < T_1<\ldots<T_k\}
	\]
	with $f^{-1}(T_i) = N_i$.
	
	Now, by Lemma~\ref{lem:WTRhoFiberStrat} and Lemma~\ref{lem:SalPrincipialIdealCovectors} 
	we have $N_0 = \Sc_{\leq (\Zero,T_0)} \isom \Lc^\dual$,
	and for each $1\leq i \leq k$ we have 
	$\Sc_{\leq (\Zero,T_i)} \supseteq N_i \cong (\Lc^{S(T_{i-1},T_i)})^\dual$.
	Hence, by Corollary~\ref{coro:SalRestrictionToPrincipalIdea}
	the map $\wt{\rho}_X$ restricted to $N_0$ is nothing else than the map $\rho_X:\Lc^\dual \to \Lc_X^\dual$.
	Similarly, for $1\leq i \leq k$ the map $\wt{\rho}_X$ restricted to $N_i$ is 
	$\wt{\rho}_X|_{N_0} = \rho_X|:{(\Lc^{S(T_{i-1},T_i)})^\dual} \to \Lc_X^\dual$, 
	which is an isomorphism by Lemma~\ref{lem:ModEltIntervalsCovectors}, since $S(T_{i-1},T_i) \cap X = \emptyset$;
	see the commutative diagram shown in Figure \ref{fig:RestrictionWTRhoNi}.	
	Assume that $a = (\sigma,R')$ for some $R' \in \Tc_X$.
	Then: 
	\begin{itemize}
		\item 
		$N_0 \isom (\rho_X\downarrow \Zero) \isom \Lc^\dual$
		$\supseteq$ 
		$(\wt{\rho}_X\downarrow a)\cap N_0 \isom (\rho_X\downarrow \sigma)$,
		
		\item 
		$N_i \isom (\rho_X|\downarrow \Zero) \isom (\Lc^{S(T_{i-1},T_i)})^\dual$
		$\supseteq$
		$(\wt{\rho}_X\downarrow a)\cap N_i \isom (\rho_X|\downarrow \sigma) \isom (\Lc_X^\dual)_{\leq \sigma}$,
		for $1\leq i \leq k$.
	\end{itemize}
	In both instances, by Corollary~\ref{coro:MatchingFiberRho} we get a matching $\mtc(i)$
	on $N_i$ with critical cells $C(\mtc(i)) = (\wt{\rho}_X\downarrow a)\cap N_i$.
	Patching these matchings together via Theorem~\ref{thm:Patchwork} along the map $f$ 
	yields our desired matching $\mtc = \bigcup_{i=0}^k\mtc(i)$
	on $(\wt{\rho}_X\downarrow (\Zero,B'))$ with $C(\mtc) = (\wt{\rho}_X\downarrow a)$
	and concludes the proof.	
\end{proof}

From the previous proposition we can now readily derive our principal result.

\begin{theorem}
	\label{thm:MainQuasifibration}
	Let $X \in L(\OM)$ be a modular flat of corank $1$. Then the map $\wt{\rho}_X:\Sc \to \Sc_X$
	is a poset quasi-fibration.
	For $a \in \Sc_X$ the poset fiber $(\wt{\rho}_X\downarrow a)$ is homotopy equivalent
	to the affine Salvetti complex of an oriented matroid $\Nc$ of rank $2$.
%	on a set with $|E\setminus X|+1$ elements.
\end{theorem}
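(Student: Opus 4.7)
The theorem contains two assertions: first, that $\wt{\rho}_X$ is a poset quasi-fibration; second, that each poset fiber is homotopy equivalent to the affine Salvetti complex of a rank two oriented matroid $\Nc$. My plan is to derive the first claim from Proposition~\ref{prop:AcyclicMatchingFibersSal} via a two-out-of-three argument, and to address the second by analyzing the stratification of Section~\ref{sec:StratSalvettiFibers} together with the isomorphism of Lemma~\ref{lem:ModEltIntervalsCovectors}.

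For the quasi-fibration property, suppose $a \leq_{\Sc_X} b$. I would pick any maximal element $c = (\Zero, B') \in \Sc_X$ with $b \leq c$; such a $c$ exists because every element of $\Sc_X$ lies beneath a top-dimensional cell of the form $(\Zero, B')$ for some $B' \in \Tc_X$. Proposition~\ref{prop:AcyclicMatchingFibersSal} applied to the pairs $a \leq c$ and $b \leq c$ then shows that both inclusions $(\wt{\rho}_X \downarrow a) \hookrightarrow (\wt{\rho}_X \downarrow c)$ and $(\wt{\rho}_X \downarrow b) \hookrightarrow (\wt{\rho}_X \downarrow c)$ are homotopy equivalences. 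Since the former factors as $(\wt{\rho}_X \downarrow a) \hookrightarrow (\wt{\rho}_X \downarrow b) \hookrightarrow (\wt{\rho}_X \downarrow c)$, the two-out-of-three property for homotopy equivalences forces $(\wt{\rho}_X \downarrow a) \hookrightarrow (\wt{\rho}_X \downarrow b)$ to be a homotopy equivalence as well, which is precisely the definition of a poset quasi-fibration.

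For the fiber identification, the quasi-fibration property combined with Proposition~\ref{prop:AcyclicMatchingFibersSal} reduces the analysis to a single fiber $F := (\wt{\rho}_X \downarrow (\Zero, B'))$ over a maximal element. I would then apply the stratification $F = \bigsqcup_{i=0}^{k} N_i$ indexed by the linearly ordered topes $T_0 < \ldots < T_k$ guaranteed by Lemma~\ref{lem:ModCoRk1LinExtFiber}. By Lemma~\ref{lem:SalPrincipialIdealCovectors} and Lemma~\ref{lem:WTRhoFiberStrat}, each closed stratum $\overline{N_i} \cong \Lc^\dual$ is a contractible ball, and consecutive strata meet along subcomplexes governed by the single-element sets $S(T_{i-1}, T_i) = \{e_i\} \subseteq E \setminus X$. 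From these $e_1, \ldots, e_k$ I would assemble a rank two oriented matroid $\Nc$ on ground set $\{g, e_1, \ldots, e_k\}$ (with $g$ a generic cone element) whose affine Salvetti complex $\Sc_{\aff, g}(\Nc)$ exhibits the same gluing pattern of $k+1$ contractible maximal cells joined along $k$ walls. A direct poset map or a discrete Morse argument would then yield the desired homotopy equivalence $F \simeq \Sc_{\aff, g}(\Nc)$.

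The main obstacle is this last identification: constructing $\Nc$ explicitly, verifying its covector axioms, and producing a cellular homotopy equivalence that correctly accounts for all higher intersections of the strata. The construction relies crucially on the modularity of $X$ via Lemma~\ref{lem:ModEltIntervalsCovectors}, which ensures that the transverse combinatorics across the strata is one-dimensional in the appropriate sense, matching precisely the structure of a rank two oriented matroid rather than that of a higher-rank configuration.
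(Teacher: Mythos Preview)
Your argument for the quasi-fibration property is exactly the paper's: pick a maximal element $(\Zero,B')$ above $b$, apply Proposition~\ref{prop:AcyclicMatchingFibersSal} to both $a\le(\Zero,B')$ and $b\le(\Zero,B')$, and conclude via two-out-of-three.

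For the fiber identification, however, you take a harder road than necessary. You reduce to the fiber over a \emph{maximal} element and then propose to analyze its stratification $F=\bigsqcup_i N_i$, build a rank~$2$ oriented matroid $\Nc$ by hand from the wall data $e_i$, and exhibit a cellular homotopy equivalence. You correctly flag this as the main obstacle, and indeed carrying it out cleanly (tracking how the closed strata $\overline{N_i}$ overlap, not just consecutively) would take some care.

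The paper sidesteps this entirely by reducing instead to the fiber over a \emph{minimal} element $(B',B')\in\Sc_X$. Since $(B',B')$ is a vertex, the poset fiber $(\wt\rho_X\downarrow(B',B'))$ is just the literal preimage $\wt\rho_X^{-1}((B',B'))$, namely the set of pairs $(\sigma,T)\in\Sc$ with $\sigma|_X=T|_X=B'$. By Lemma~\ref{lem:ModCoRk1LinExtFiber} the covectors $\sigma\in\Lc$ with $\sigma|_X=B'$ form a one-dimensional string, which is precisely the affine covector poset of a rank~$2$ oriented matroid $\Nc$ on $E\setminus X\sqcup\{g\}$. The fiber is then \emph{equal} (not merely homotopy equivalent) to $\Sc_{\aff,g}(\Nc)$ by definition of the Salvetti poset. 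No stratification, no gluing analysis, no construction of $\Nc$ from scratch: the oriented matroid is handed to you by the string structure of $(\rho_X\downarrow B')$. Since all fibers are homotopy equivalent by the first part, this single identification suffices.
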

\begin{proof}
	Let $a, b \in \Sc_X$ with $a\leq b$. We have to show that the inclusion
	$(\wt{\rho_X}\downarrow a) \hookrightarrow (\wt{\rho}_X\downarrow b)$ is a homotopy equivalence.
	Picking some maximal element $(\Zero, B')$ of $\Sc_X$ with $b \leq (\Zero, B')$,
	we have a commutative triangle of inclusion maps:
	\begin{center}
	\begin{tikzcd}
		 & (\wt{\rho}_X\downarrow (\Zero,B')) & \\
		(\wt{\rho}_X\downarrow a) \ar[ur,hook, "i_1"] \ar[rr,hook,"i_3"] & & (\wt{\rho}_X\downarrow b). \ar[ul,hook',"i_2" above]
	\end{tikzcd}
	\end{center}
	
	By Proposition~\ref{prop:AcyclicMatchingFibersSal}
	the two inclusions $i_1$ and $i_2$ are homotopy equivalences.
	Hence, so is the third inclusion $i_3$.
	This confirms that $\wt{\rho}_X$ is a poset quasi-fibration.
	
	To see the last statement, pick a minimal element $(B',B') \in \Sc_X$. 
	We easily see that $(\rho_X\downarrow B') = \Nc_{\aff,g}$ for
	an oriented matroid $\Nc$ of rank two on $E \setminus X \sqcup \{g\}$ by Lemma \ref{lem:ModCoRk1LinExtFiber}.
	Then $(\wt{\rho}_X\downarrow (B',B')) = \Sc_{\aff,g}(\Nc)$.
%	\todo{add one-two sentences about this last statement.}
\end{proof}

The affine Salvetti complex of an oriented matroid of rank $2$ is a graph
and consequently aspherical.
Hence, by the long exact sequence of homotopy groups associated to a poset quasi-fibration
due to Theorem~\ref{thm:TheoremB}
we immediately obtain the following.

\begin{corollary}
	\label{coro:ModFlatCork1Kpi1}
	If $X \in L(\OM)$ is modular of corank $1$ and $\Sc_X$ is aspherical,
	then $\Sc$ is aspherical as well.
\end{corollary}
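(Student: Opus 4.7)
The plan is to combine Theorem~\ref{thm:MainQuasifibration} with the long exact homotopy sequence of a poset quasi-fibration provided by Theorem~\ref{thm:TheoremB}. The heavy lifting has already been done in establishing that $\wt{\rho}_X$ is a poset quasi-fibration with an explicit identification of the fiber; what remains is a standard homotopy-theoretic extraction.

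First I would invoke Theorem~\ref{thm:MainQuasifibration} to obtain that $\wt{\rho}_X: \Sc \to \Sc_X$ is a poset quasi-fibration and that each poset fiber $(\wt{\rho}_X \downarrow a)$, for $a \in \Sc_X$, is homotopy equivalent to the affine Salvetti complex $\Sc_{\aff,g}(\Nc)$ of some oriented matroid $\Nc$ of rank $2$. Then I would observe that $\Sc_{\aff,g}(\Nc)$ is one-dimensional (a graph): its cells have the form $(\sigma, T)$ with $\sigma \leq T$ in a rank-$2$ oriented matroid restricted to the positive half-space, so the dimension of a cell, being the corank of $z(\sigma)$, is at most $1$. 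Any graph is aspherical (homotopy equivalent to a wedge of circles or a point), so $\pi_i(\wt{\rho}_X \downarrow a) = 0$ for every $i \geq 2$.

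Next, since $\wt{\rho}_X$ is a poset quasi-fibration, Theorem~\ref{thm:TheoremB} applies and yields, for any $a \in \Sc_X$ and any $x \in \wt{\rho}_X^{-1}(a) \subseteq \Sc$, a long exact sequence
\[
\cdots \to \pi_{i+1}(\Sc_X, a) \to \pi_i(\wt{\rho}_X \downarrow a, x) \to \pi_i(\Sc, x) \to \pi_i(\Sc_X, a) \to \cdots
\]
By hypothesis $\Sc_X$ is aspherical, so $\pi_i(\Sc_X, a) = \pi_{i+1}(\Sc_X, a) = 0$ for all $i \geq 2$. Combined with the vanishing of $\pi_i$ of the fiber, the exactness of the sequence forces $\pi_i(\Sc, x) = 0$ for all $i \geq 2$, which is exactly the statement that $\Sc$ is aspherical.

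I do not foresee a genuine obstacle; the only points that need a line of justification are (i) the one-dimensionality of the affine rank-$2$ Salvetti complex, and (ii) basepoint/connectedness bookkeeping, which is harmless because every $a \in \Sc_X$ has a preimage in $\Sc$ (for instance via the section $\wt{\iota}_\alpha$ from Remark~\ref{rem:SectionRhoX}) and $\Sc$ is path connected. If a sharper statement were desired for $\pi_1$ one would have to unwind the sequence at the base of the range, but for asphericity the above suffices verbatim.
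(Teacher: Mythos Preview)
Your proof is correct and follows essentially the same route as the paper: invoke Theorem~\ref{thm:MainQuasifibration} to identify the poset fiber as the affine Salvetti complex of a rank~$2$ oriented matroid, observe that this is a graph and hence aspherical, and then read off the vanishing of $\pi_i(\Sc)$ for $i\geq 2$ from the long exact sequence of Theorem~\ref{thm:TheoremB}. The paper states this argument in a single sentence preceding the corollary, while you have spelled out the details (dimension count for the fiber, basepoint bookkeeping) more carefully.
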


\begin{corollary}[{Theorem~\ref{thm:SSOMKpi1}}]
	\label{coro:SSOMsKpi1}
	The Salvetti complex of a supersolvable oriented matroid is aspherical.
\end{corollary}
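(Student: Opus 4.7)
The plan is to prove the corollary by induction on the rank $r$ of the supersolvable oriented matroid $\OM$, with the inductive step supplied by Corollary~\ref{coro:ModFlatCork1Kpi1}. Fix a maximal chain $X_0 < X_1 < \cdots < X_r$ of modular flats in $L(\OM)$ witnessing supersolvability.

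For the base case I would handle $r \leq 1$ directly: the Salvetti complex is a point or a circle, hence a $K(\pi,1)$-space. (One can equally start the induction at $r = 0$.)

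For the inductive step ($r \geq 2$) the flat $X_{r-1}$ is modular of corank one. I would first verify that the localization $\OM_{X_{r-1}}$ is again supersolvable, of rank $r-1$. Its geometric lattice is the interval $L_{X_{r-1}} = [X_0, X_{r-1}]$, and the truncated chain $X_0 < X_1 < \cdots < X_{r-1}$ is a maximal chain therein. Each $X_i$ with $i \leq r-1$ remains modular as an element of $L_{X_{r-1}}$: for any $Y, Z \in L_{X_{r-1}}$ with $Z \leq Y$, the meet $X_i \wedge Y$ and the join $Z \vee X_i$ computed in $L$ both lie below $X_{r-1}$, so the modularity identity for $X_i$ in $L$ restricts verbatim to the sublattice $L_{X_{r-1}}$. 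The inductive hypothesis then yields that $\Sc_{X_{r-1}} = \Sc(\OM_{X_{r-1}})$ is aspherical, and Corollary~\ref{coro:ModFlatCork1Kpi1} upgrades this to asphericity of $\Sc = \Sc(\OM)$, completing the induction.

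The only step beyond routine bookkeeping is the lattice-theoretic observation that modularity of a flat descends to a localization containing it; there is no substantial obstacle, since the geometric and topological content has already been packaged into Theorem~\ref{thm:MainQuasifibration} and its consequence Corollary~\ref{coro:ModFlatCork1Kpi1}.
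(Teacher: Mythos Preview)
Your proposal is correct and is exactly the argument the paper intends: the corollary is stated without proof because it follows by the obvious induction on rank using Corollary~\ref{coro:ModFlatCork1Kpi1}, and your write-up supplies precisely that induction together with the routine check that the truncated chain of modular flats witnesses supersolvability of the localization $\OM_{X_{r-1}}$.
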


%%%%%%%%%%%%%%%%%%%%%%%%%%%%%%%%%%%%%%%%%%%%%%%%%%%%%%%%%%%%%%%%%%%%%%%%%%%%%%%%%%%%%%%%%%%%%%%%%%%%
%%%%%%%%%%%%%%%%%%%%%%%%%%%%%%%%%%%%%%%%%%%%%%%%%%%%%%%%%%%%%%%%%%%%%%%%%%%%%%%%%%%%%%%%%%%%%%%%%%%%

\section{Fundamental groups}
\label{sec:FundGrps}

We analyze the structure of the fundamental group of the Salvetti complex of a
supersolvable oriented matroid. 
This is completely analogous to the realizable case, cf. \cite{FalkRandell1985_FiberTypeArrs}.

Let $P$ be a poset and $x \in P$. Then we write $\pi_1(P,x)= \pi_1(|\Delta(P)|,x)$ for the fundamental group of $P$.
Similarly, if $f:P\to Q$ is an order preserving map, then we write 
$\pi_1(f) := \pi_1(|\Delta(f)|):\pi_1(|\Delta(P)|,x) \to \pi_1(|\Delta(Q)|,f(x))$ for the induced map
between fundamental groups. 

Firstly, we have the following lemma.
\begin{lemma}
	\label{lem:PosetQuasiFibrSectionPi1}
	Assume that $f:P \to Q$ is a poset quasi-fibration which has a section $s:Q\to P$.
	Let $x \in P$ and $a = f(x)$ and assume further that $|\Delta(Q)|$ and $|\Delta(f\downarrow a)|$ are $K(\pi,1)$-spaces.
	Then the fundamental group $\pi_1(P,x)$ is a semi-direct product:
	\[
		\pi_1(P,x) \cong \pi_1((f\downarrow a),x) \rtimes \pi_1(Q,a).
	\]
\end{lemma}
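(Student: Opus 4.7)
The plan is to apply Theorem~\ref{thm:TheoremB} to the poset quasi-fibration $f\colon P\to Q$ and to extract from the resulting long exact homotopy sequence a short exact sequence of fundamental groups, which will then be split by the section $s$.

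First I would write down the relevant portion of the long exact sequence provided by Theorem~\ref{thm:TheoremB}:
\[
\pi_2(Q,a)\longrightarrow \pi_1((f\downarrow a),x)\longrightarrow \pi_1(P,x)\xrightarrow{\pi_1(f)}\pi_1(Q,a)\longrightarrow \pi_0((f\downarrow a)).
\]
Since $|\Delta(Q)|$ is aspherical we have $\pi_2(Q,a)=0$, so the leftmost arrow is injective; since $|\Delta(f\downarrow a)|$ is a $K(\pi,1)$-space it is in particular path-connected, so the pointed set $\pi_0((f\downarrow a))$ is trivial and $\pi_1(f)$ is surjective. Hence the sequence collapses to a short exact sequence of groups
\[
1\longrightarrow \pi_1((f\downarrow a),x)\longrightarrow \pi_1(P,x)\longrightarrow \pi_1(Q,a)\longrightarrow 1.
\]

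Next I would use the section to split this sequence. The identity $f\circ s=\id_Q$ together with functoriality of $|\Delta(-)|$ gives $|\Delta(f)|\circ|\Delta(s)|=\id_{|\Delta(Q)|}$, and applying $\pi_1$ yields $\pi_1(f)\circ\pi_1(s)=\id_{\pi_1(Q,a)}$. Thus $\pi_1(s)$ is a group-theoretic section of $\pi_1(f)$, and the standard fact that a split short exact sequence $1\to N\to G\to H\to 1$ gives rise to $G\cong N\rtimes H$ yields the claimed decomposition.

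The main technical point I anticipate is the basepoint subtlety: in general there is no reason to expect $s(a)=x$, so a priori the homomorphism $\pi_1(s)$ lands in $\pi_1(P,s(a))$ rather than in $\pi_1(P,x)$. However, both $x$ and $s(a)$ lie in the fiber $f^{-1}(a)\subseteq (f\downarrow a)$, which is path-connected by the $K(\pi,1)$ hypothesis on $(f\downarrow a)$. Any path inside $|\Delta(f\downarrow a)|$ joining them induces basepoint-change isomorphisms that are compatible with the inclusion into $|\Delta(P)|$, and therefore transport a splitting at the basepoint $s(a)$ to a splitting at $x$. Once this basepoint adjustment is performed, the semidirect product decomposition follows from the split extension argument above.
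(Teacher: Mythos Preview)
Your proof is correct and follows essentially the same route as the paper: invoke Theorem~\ref{thm:TheoremB}, use the $K(\pi,1)$ hypotheses to collapse the long exact sequence to a short exact one, and split it via $\pi_1(s)$. You are in fact more careful than the paper about the basepoint issue---the paper simply asserts that $\pi_1(s)$ is a section to $\pi_1(f)$ without comment---and your observation that a connecting path can be taken inside $(f\downarrow a)$ (so that its image under $f$ lies in the cone $|\Delta(Q_{\leq a})|$ and is hence nullhomotopic) is exactly what makes the transported splitting an honest section.
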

\begin{proof}
	By Theorem \ref{thm:TheoremB}, the long exact sequence of the poset quasi-fibration degenerates 
	to the short exact sequence:
	\begin{center}
		\begin{tikzcd}
			1 \ar[r] &\pi_1((f\downarrow a),x) \ar[r] &\pi_1(P,x) \ar[r, "\pi_1(f)"] &\pi_1(Q,a) \ar[r] &1.
		\end{tikzcd}
	\end{center}
	The map $\pi_1(s)$ gives a section to $\pi_1(f)$ and so the above short exact sequence splits.
\end{proof}

With the preceding lemma and our main result from the last section we can say the following.
\begin{theorem}
	\label{thm:Pi1ModCork1}
	Let $\OM = (E,\Lc)$ be an oriented matroid and assume that $X \in L(\OM)$ is modular and
	of corank $1$.
	Assume further that $\Sc_X$ is aspherical.
	Then for $x \in \Sc$ and $a = \wt{\rho}_X(x) \in \Sc_X$ we have
	\[
		\pi_1(\Sc,x) \cong F \rtimes \pi_1(\Sc_X,a),
	\]
	where $F$ is a free group with $|E \setminus X|$ generators.
\end{theorem}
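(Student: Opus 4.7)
The plan is to apply Lemma~\ref{lem:PosetQuasiFibrSectionPi1} to the localization map $\wt{\rho}_X:\Sc\to\Sc_X$. By Theorem~\ref{thm:MainQuasifibration}, $\wt{\rho}_X$ is a poset quasi-fibration, and Remark~\ref{rem:SectionRhoX} provides a section $\wt{\iota}_\alpha$ for any choice of $\alpha\in\Lc$ with $z(\alpha)=X$ (two such $\alpha$ exist since $X$ has corank $1$). The base $\Sc_X$ is aspherical by hypothesis, so only asphericity of the poset fiber remains to be checked in order to invoke Lemma~\ref{lem:PosetQuasiFibrSectionPi1}.

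Taking a minimal element $a=(B',B')\in\Sc_X$ as basepoint, Theorem~\ref{thm:MainQuasifibration} identifies $(\wt{\rho}_X\downarrow a)$ with the affine Salvetti complex $\Sc_{\aff,g}(\Nc)$ of a rank-$2$ oriented matroid $\Nc$ on $E\setminus X\sqcup\{g\}$. Since $\Nc$ has rank $2$, the top-dimensional cells of $\Sc(\Nc)$ are precisely those of the form $(\Zero,T)$, all of which are excluded from $\Sc_{\aff,g}(\Nc)$ by the defining condition $\sigma_g=+$. Hence the fiber is a graph, and in particular aspherical with free fundamental group. Lemma~\ref{lem:PosetQuasiFibrSectionPi1} then delivers the desired splitting $\pi_1(\Sc,x)\cong F\rtimes\pi_1(\Sc_X,a)$, where $F:=\pi_1((\wt{\rho}_X\downarrow a),x)$ is free.

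The main remaining task, which I expect to be the principal obstacle, is computing the rank of $F$. I would carry this out via a direct Euler characteristic count on the (connected) graph $\Sc_{\aff,g}(\Nc)$. The vertices are the minimal cells $(T,T)$ with $T_g=+$; since the $2(|E\setminus X|+1)$ topes of the rank-$2$ oriented matroid $\Nc$ are split into two equal halves by $g$, there are $|E\setminus X|+1$ of them. The edges are the cells $(\sigma,T)$ with $z(\sigma)$ a rank-$1$ flat of $L(\Nc)$ and $\sigma_g=T_g=+$: for each $e\in E(\Nc)\setminus\{g\}=E\setminus X$ there is exactly one rank-$1$ covector $\sigma$ with $z(\sigma)=\{e\}$ and $\sigma_g=+$, and such a $\sigma$ lies below precisely two topes, both automatically positive at $g$; this yields $2|E\setminus X|$ edges. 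Hence $\chi(\Sc_{\aff,g}(\Nc))=(|E\setminus X|+1)-2|E\setminus X|=1-|E\setminus X|$, and $F$ is free of rank $|E\setminus X|$ as claimed.
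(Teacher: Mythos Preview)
Your proposal is correct and follows essentially the same route as the paper: invoke Theorem~\ref{thm:MainQuasifibration} for the quasi-fibration, Remark~\ref{rem:SectionRhoX} for the section, observe the fiber is the affine Salvetti complex of a rank-$2$ oriented matroid and hence a graph, and apply Lemma~\ref{lem:PosetQuasiFibrSectionPi1}. The only difference is cosmetic: where the paper simply asserts that the fiber is homotopy equivalent to a wedge of $|E\setminus X|$ circles, you spell out the rank of $F$ via an explicit Euler-characteristic count on $\Sc_{\aff,g}(\Nc)$, which is a perfectly valid (and arguably more transparent) way to reach the same conclusion.
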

\begin{proof}
	By Theorem \ref{thm:MainQuasifibration} the map $\wt{\rho}_X:\Sc \to \Sc_X$
	is a poset quasi fibration with $(\wt{\rho}_X\downarrow a)$ homotopy equivalent to
	the affine Salvetti complex of an oriented matroid of rank $2$ on $|E\setminus X|+1$ elements.
	Hence $(\wt{\rho}_X\downarrow a) \isom S^1\vee \ldots \vee S^1$ is a wedge of $|E\setminus X|$ circles.
	Thus, $\pi_1((\wt{\rho}_X\downarrow a),a) = F$ is a free group with $|E \setminus X|$ generators.
	Furthermore, $\wt{\rho_X}$ has a section by Remark \ref{rem:SectionRhoX} and 
	Lemma \ref{lem:PosetQuasiFibrSectionPi1} yields the result.
\end{proof}

Consequently, the structure of the fundamental group of the Salvetti complex of a supersolvable
oriented matroid is as follows.
\begin{corollary}
	\label{coro:Pi1SSOM}
	Let $\OM$ be a supersolvable oriented matroid of rank $r$.
	Then $\pi_1(\Sc,x) \cong F_1 \rtimes (F_2 \rtimes (\ldots (F_{r-1} \rtimes \ZZ)\ldots))$
	where the $F_i$ are finitely generated free groups.
\end{corollary}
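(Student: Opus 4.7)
The plan is to prove the corollary by induction on the rank $r$, peeling off one corank-one modular flat at each step via Theorem~\ref{thm:Pi1ModCork1}. Let $X_0 < X_1 < \cdots < X_r$ be a maximal chain of modular flats witnessing the supersolvability of $\OM$. The natural candidate for the outermost layer is $X_{r-1}$, which is modular and of corank one in $L(\OM)$. The base case $r=1$ reduces to observing that a simple rank-one oriented matroid has a single ground set element, its Salvetti complex is homotopy equivalent to a circle, and hence $\pi_1(\Sc,x) \cong \ZZ$.

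For the inductive step I would first verify that the localization $\OM_{X_{r-1}}$ is itself supersolvable of rank $r-1$. Its geometric lattice is the interval $[\emptyset, X_{r-1}] \subseteq L(\OM)$, in which joins and meets of elements below $X_{r-1}$ agree with those of $L(\OM)$. Reading Definition~\ref{def:ModularElementGeometricLattice} inside this interval then shows that each $X_i$ with $0 \leq i \leq r-1$ remains modular in $L(\OM_{X_{r-1}})$, so that the truncated chain $X_0 < X_1 < \cdots < X_{r-1}$ witnesses supersolvability of the localization.

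By Corollary~\ref{coro:SSOMsKpi1} applied to $\OM_{X_{r-1}}$, the Salvetti complex $\Sc_{X_{r-1}}$ is aspherical. Hence Theorem~\ref{thm:Pi1ModCork1} applies to $X_{r-1}$ and yields
\[
\pi_1(\Sc,x) \cong F_1 \rtimes \pi_1(\Sc_{X_{r-1}},a),
\]
where $F_1$ is free of rank $|E \setminus X_{r-1}|$ and $a = \wt{\rho}_{X_{r-1}}(x)$. The inductive hypothesis applied to $\OM_{X_{r-1}}$ gives $\pi_1(\Sc_{X_{r-1}},a) \cong F_2 \rtimes (F_3 \rtimes (\cdots (F_{r-1} \rtimes \ZZ)\cdots))$ with each $F_i$ finitely generated free. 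Substituting this decomposition into the previous display completes the induction.

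The only content beyond Theorem~\ref{thm:Pi1ModCork1} and Corollary~\ref{coro:SSOMsKpi1} is the hereditary modularity step keeping the induction running; I expect this to be the main (though routine) obstacle, as everything else is a direct application of the poset quasi-fibration long exact sequence and splitting from Lemma~\ref{lem:PosetQuasiFibrSectionPi1}.
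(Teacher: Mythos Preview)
Your argument is correct and is exactly the intended one: the paper states the corollary without proof, treating it as the obvious consequence of iterating Theorem~\ref{thm:Pi1ModCork1} along the modular chain, which is precisely your induction on $r$. Your verification that modularity of the $X_i$ persists in the localization $L_{X_{r-1}}$ is the routine check that keeps the induction running, and your base case is right.
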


%%%%%%%%%%%%%%%%%%%%%%%%%%%%%%%%%%%%%%%%%%%%%%%%%%%%%%%%%%%%%%%%%%%%%%%%%%%%%%%%%%%%%%%%%%%%%%%%%%%%
%%%%%%%%%%%%%%%%%%%%%%%%%%%%%%%%%%%%%%%%%%%%%%%%%%%%%%%%%%%%%%%%%%%%%%%%%%%%%%%%%%%%%%%%%%%%%%%%%%%%

\section{Non-realizable supersolvable oriented matroids}
\label{sec:ApplicationExamples}

We provide examples of non-realizable supersolvable oriented matroids.
In particular, we observe that every oriented matroid of rank $3$ can be extended to
a supersolvable oriented matroid.

Recall the definition of an extension of an oriented matroid (see Definition~\ref{def:OMExtension}).
Before we construct supersolvable extensions, 
we record a fundamental result for arrangements of pseudolines.
This is Levi's Enlargement Lemma \cite{Levi1926_PseudoGeraden} which 
we state in its version for oriented matroids of rank $3$,
see \cite[Prop.~6.3.4]{BLSWZ1999_OrientedMatroids}.
\begin{lemma}[Levi's Enlargement Lemma]
	\label{lem:LevisEnlargement}
	Let $\OM = (E,\Lc)$ be an oriented matroid of rank $3$ and let $X_1,X_2 \in L(\OM)$
	be two distinct flats of rank $2$ with $X_1\cap X_2 = \emptyset$.
	Then there is a one element extension $\wt{\OM}$ on $E \sqcup \{g\}$ of $\OM$
	such that $\wt{X_1},\wt{X_2}$ are flats of rank $2$ and $g \in \wt{X_1} \cap \wt{X_2}$.
	Moreover, the extension can be chosen such that $g \notin \wt{Y}$ for any flat $Y \in L\setminus \{X_1,X_2\}$ of rank $2$.
\end{lemma}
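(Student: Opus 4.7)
The plan is to invoke the topological representation theorem of Folkman and Lawrence (\cite[Thm.~5.2.1]{BLSWZ1999_OrientedMatroids}) and reduce the statement to a topological construction on the $2$-sphere, then establish the genericity in the ``moreover'' part by a small perturbation. Under the topological representation theorem, the rank-$3$ oriented matroid $\OM$ is encoded by a tame arrangement $\{S_e\}_{e \in E}$ of pseudocircles (codimension-one pseudospheres) on $S^2$; the rank-$2$ flats of $L(\OM)$ correspond bijectively to the antipodal pairs of intersection points of the pseudocircles. Under this dictionary, the assumption $X_1 \cap X_2 = \emptyset$ translates into the statement that the two antipodal pairs of intersection points $\{p_1,-p_1\}$ (corresponding to $X_1$) and $\{p_2,-p_2\}$ (corresponding to $X_2$) share no common pseudocircle through them.

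Next, I would produce a new tame pseudocircle $S_g \subseteq S^2$ passing through $p_1, p_2$ (hence also through $-p_1, -p_2$ by antipodality) and intersecting every other $S_e$ ($e \in E$) transversely in exactly one antipodal pair. This is the classical Levi construction: one builds a simple arc from $p_1$ to $p_2$ inside one hemisphere that meets each $S_e$ in finitely many transverse crossings, glues its antipodal image, and straightens the result to a tame pseudocircle. Once $S_g$ is constructed, the enlarged collection $\{S_e\}_{e \in E} \cup \{S_g\}$ is again a tame pseudosphere arrangement, so by the converse direction of the topological representation theorem it defines an oriented matroid $\wt{\OM}$ on $E \sqcup \{g\}$ with $\wt{\OM}|_E = \OM$. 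By construction, $\wt{X}_i = X_i \cup \{g\}$ is a rank-$2$ flat for $i=1,2$.

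For the ``moreover'' clause, observe that the set of intersection points of the original arrangement that correspond to rank-$2$ flats other than $X_1, X_2$ is a finite subset $\Pi \subseteq S^2 \setminus \{\pm p_1, \pm p_2\}$. Since the arc constructed in the previous step lives in a $2$-dimensional space and $\Pi$ is finite, a $C^0$-small perturbation of the arc (fixing the endpoints $p_1, p_2$ and supported away from neighborhoods of $\Pi$) produces an arc avoiding $\Pi$ altogether while preserving, for each individual $S_e$, the combinatorial pattern of transverse crossings. Antipodally symmetrizing and tamely embedding yields a pseudocircle $S_g$ with the desired properties, so $g$ lies in no $\wt{Y}$ for $Y \in L \setminus \{X_1, X_2\}$ of rank $2$.

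The main obstacle in this plan is verifying that the new arrangement $\{S_e\}_{e \in E} \cup \{S_g\}$ genuinely satisfies the pseudosphere axioms of \cite[Def.~5.1.3]{BLSWZ1999_OrientedMatroids}; in particular, one must ensure centrally symmetric tameness and that pairwise intersections of pseudocircles are pseudospheres of codimension two. This is precisely the content of Levi's original argument, and I would appeal to \cite[Prop.~6.3.4]{BLSWZ1999_OrientedMatroids} for this step rather than reproducing it. The perturbation argument for the genericity clause is then a routine transversality statement on $S^2$ that does not affect the combinatorial data of $\wt{\OM}|_E = \OM$.
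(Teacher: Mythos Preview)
The paper does not supply its own proof of this lemma: it is stated as a known result, attributed to Levi and cited from \cite[Prop.~6.3.4]{BLSWZ1999_OrientedMatroids}. Your proposal sketches the standard topological-representation argument behind that reference and then explicitly defers the technical verification to the very same citation, so there is nothing to compare---your outline is consistent with the classical proof and lands on the same source the paper invokes.
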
	

From the preceding lemma we can delineate the existence of supersolvable extensions
for oriented matroids of rank $3$.
\begin{proposition}
	\label{prop:OMSSExtension}
	Let $\OM$ be an oriented matroid of rank $3$.
	Then there is an extension $\wt{\OM}$ of $\OM$ which is supersolvable.
\end{proposition}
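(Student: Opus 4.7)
The plan is to reduce the problem to Lemma~\ref{lem:SSLrk3} and iterate Levi's Enlargement Lemma (Lemma~\ref{lem:LevisEnlargement}). By Lemma~\ref{lem:SSLrk3}, it suffices to construct an extension $\wt{\OM}$ of $\OM$ in which some fixed rank-$2$ flat meets every other rank-$2$ flat.

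First I would fix an arbitrary rank-$2$ flat $X \in L(\OM)$ and enumerate the rank-$2$ flats $Y_1,\ldots,Y_k$ of $\OM$ with $Y_i\cap X = \emptyset$; these are precisely the obstructions to $X$ being modular. I would then build a chain of one-element extensions $\OM = \OM^{(0)} \subseteq \OM^{(1)} \subseteq \cdots \subseteq \OM^{(k)} =: \wt{\OM}$, where $\OM^{(i)}$ is obtained from $\OM^{(i-1)}$ by applying Levi's Enlargement Lemma to the distinct rank-$2$ flats $\wt{X}$ and $\wt{Y_i}$; this produces a new element $g_i \in \wt{X}\cap \wt{Y_i}$ which by the moreover clause lies on no other rank-$2$ flat of $\OM^{(i-1)}$.

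Two points need checking at each step. First, to apply Levi's lemma, one needs $\wt{X}\cap \wt{Y_i} = \emptyset$ in $\OM^{(i-1)}$: inductively, each previously adjoined $g_j$ ($j<i$) lies on $\wt{X}$, but by the moreover clause at step $j$ it lies on no rank-$2$ flat of $\OM^{(j-1)}$ besides $\wt{X}$ and $\wt{Y_j}$, so $g_j \notin \wt{Y_i}$ since $Y_j\neq Y_i$; hence $\wt{X}\cap \wt{Y_i} = X\cap Y_i = \emptyset$. Second, I would describe the rank-$2$ flats of $\OM^{(i)}$ that are not lifts of rank-$2$ flats of $\OM^{(i-1)}$: any such flat must contain $g_i$, and the moreover clause forces it to have the form $\{g_i, e\}$ for some element $e$ of the ground set of $\OM^{(i-1)}$ not on $\wt{X}\cup \wt{Y_i}$. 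The crucial consequence is that every such new flat contains $g_i \in \wt{X}$.

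Putting this together, every rank-$2$ flat of $\wt{\OM}$ meets $\wt{X}$: a lift of a rank-$2$ flat of $\OM$ either already met $X$, or equals some $\wt{Y_j}$ which now contains $g_j \in \wt{X}$; and each auxiliary flat $\{g_j, e\}$ introduced at step $j$ contains $g_j \in \wt{X}$. Lemma~\ref{lem:SSLrk3} then delivers supersolvability of $\wt{\OM}$. The main subtlety I foresee is the combinatorial bookkeeping of how rank-$2$ flats evolve under successive one-element extensions---in particular, that earlier new flats $\{g_i, e\}$ are not inadvertently enlarged by the later insertions $g_j$ with $j>i$---but this is again a direct consequence of the moreover clause of Levi's lemma at step $j$, which prevents $g_j$ from lying on any rank-$2$ flat of $\OM^{(j-1)}$ other than $\wt{X}$ and $\wt{Y_j}$.
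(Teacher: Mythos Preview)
Your proof is correct and follows the same strategy as the paper: iterate Levi's Enlargement Lemma to make a fixed rank-$2$ flat $X$ meet every other rank-$2$ flat, then invoke Lemma~\ref{lem:SSLrk3}. The paper's write-up is slightly slicker---rather than enumerating the obstructions $Y_1,\ldots,Y_k$ upfront and tracking the evolution of all rank-$2$ flats through the extensions, it simply observes that each one-element extension strictly decreases the count $|\{Y \in L \mid Y \cap \wt{X} = \emptyset\}|$ (any such $Y$ cannot contain the new point $g\in\wt X$, hence was already a rank-$2$ flat disjoint from $X$ before the extension, and the old $Y$ is no longer on the list), so the process terminates without the explicit bookkeeping you carry out.
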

\begin{proof}
	Let $X \in L(\OM)$ be some flat of rank $2$.
	If there is no $Y \in L(\OM)$ of rank $2$ with $X\cap Y = \emptyset$,
	then by Lemma~\ref{lem:SSLrk3} the oriented matroid $\OM$ is already supersolvable and there is nothing to show.
	Otherwise, let $Y \in L(\OM)$ be a flat of rank $2$ with $X \cap Y = \emptyset$.
	By Lemma~\ref{lem:LevisEnlargement} we can extend $\OM$ by a single element $g$ to $\wt{\OM}$
	such that $\wt{X} \cap \wt{Y} = \{g\}$. 
	Then we have 
	\[
		|\{Y' \in L(\wt{\OM}) \mid Y' \cap \wt{X} = \emptyset\}| < |\{Y \in L(\OM) \mid X \cap Y = \emptyset\}|.		
	\]
	Hence, after a finite number of single element extensions via Lemma~\ref{lem:LevisEnlargement} we arrive at
	an extension $\wt{\OM}'$ of $\OM$ which is supersolvable thanks to Lemma~\ref{lem:SSLrk3}.
\end{proof}

\begin{remark}
	\label{rem:SSExtHigherRk}
	Note that there is no analogue of Lemma \ref{lem:LevisEnlargement} for oriented matroids of rank $\geq 4$.
	E.g.\ the Edmonds-Fukuda-Mandel oriented matroid $\mathtt{EFM}(8)$ -- 
	an orientation of the uniform matroid of rank $4$ on $8$ points --
	gives an example that has three corank $1$ flats which cannot be connected
	by any extension, cf.\ \cite[Prop.~10.4.5]{BLSWZ1999_OrientedMatroids}.
	Consequently, it is not clear to us that supersolvable extensions for higher rank oriented matroids
	exist in general.
	In contrast, in the realizable case one easily sees that any hyperplane arrangement is a subarrangement
	of a supersolvable arrangement.
\end{remark}

The remark leaves the following question.

\begin{question}
	\label{que:SSExtHigherRk}
%	Are there oriented matroids of rank $\geq 4$ which do not have supersolvable extensions?
	Do oriented matroids of rank $\geq 4$ always have supersolvable extensions?
\end{question}

Starting with a non-realizable oriented matroid, Proposition~\ref{prop:OMSSExtension} 
yields a non-realizable supersolvable oriented matroid.
Consequently, we obtain many non-realizable supersolvable oriented matroids.
\begin{corollary}
	\label{coro:InfSSNonRealOMs}
	There are infinitely many supersolvable oriented matroids
	which are not realizable.
\end{corollary}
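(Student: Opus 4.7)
The plan is to combine Proposition~\ref{prop:OMSSExtension} with the existence of a classical non-realizable oriented matroid of rank $3$, together with the elementary observation that non-realizability is inherited upward through extensions.

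First, I would verify the following basic \emph{hereditary property}: if $\wt{\OM}$ is an extension of $\OM$ and $\wt{\OM}$ is realizable by a real hyperplane arrangement $\wt{\Ac}$, then the sub-arrangement $\Ac \subseteq \wt{\Ac}$ indexed by $E \subseteq E'$ realizes $\OM = \wt{\OM}|_E$. Consequently, if $\OM$ is non-realizable then every extension $\wt{\OM}$ of $\OM$ is non-realizable as well. This is a one-line argument but the whole construction hinges on it.

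Next, I would fix a single non-realizable simple oriented matroid $\OM_0$ of rank $3$; the standard example is the non-Pappus pseudoline arrangement (or the non-Desargues configuration), whose non-realizability is classical, see \cite[Prop.~1.11.1]{BLSWZ1999_OrientedMatroids}. Applying Proposition~\ref{prop:OMSSExtension} to $\OM_0$ produces a supersolvable rank-$3$ extension $\wt{\OM}_0$, and by the hereditary property just recorded, $\wt{\OM}_0$ is still non-realizable. This already yields one example.

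To upgrade \emph{one} example to \emph{infinitely many}, I would produce for each $n \geq 0$ a simple non-realizable rank-$3$ oriented matroid $\OM_n$ containing $\OM_0$ as a restriction, with $|E(\OM_n)| = |E(\OM_0)| + n$. Such $\OM_n$ are obtained by iterated use of Levi's Enlargement Lemma~\ref{lem:LevisEnlargement} to insert $n$ pseudolines in sufficiently generic position so that the resulting oriented matroid is simple. Each $\OM_n$ is non-realizable (restrictions of realizable oriented matroids are realizable, and $\OM_n|_{E(\OM_0)} = \OM_0$), and by Proposition~\ref{prop:OMSSExtension} each admits a supersolvable extension $\wt{\OM}_n$, which remains non-realizable. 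Since $|E(\wt{\OM}_n)| \geq |E(\OM_n)| = |E(\OM_0)| + n$, the cardinalities of the ground sets are unbounded, so the $\wt{\OM}_n$ are pairwise non-isomorphic.

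The main (and really only) obstacle is ensuring that the enlarging step can always be carried out so that the new oriented matroids remain simple and of rank $3$; this amounts to choosing the pseudoline inserted by Levi's lemma to avoid passing through any existing rank-$2$ flat beyond those one is forced to connect, which is permitted by the ``moreover'' clause of Lemma~\ref{lem:LevisEnlargement}. Once this is in place, the conclusion is immediate.
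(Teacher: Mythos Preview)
Your proposal is correct and follows essentially the same route as the paper: combine Proposition~\ref{prop:OMSSExtension} with the hereditary non-realizability of extensions, starting from a non-Pappus configuration. The paper's argument is in fact terser than yours---it simply notes that applying Proposition~\ref{prop:OMSSExtension} to any non-realizable rank-$3$ oriented matroid yields a non-realizable supersolvable one, and leaves the passage to ``infinitely many'' implicit (relying on the well-known abundance of non-realizable rank-$3$ oriented matroids). Your explicit construction of the family $\OM_n$ via iterated Levi enlargements is a perfectly good way to make this step concrete. One small wording issue: from $|E(\wt{\OM}_n)| \geq |E(\OM_0)| + n$ you conclude the $\wt{\OM}_n$ are \emph{pairwise} non-isomorphic, but unboundedness only gives infinitely many isomorphism classes among them, which is all you need.
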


All of these non-realizable oriented matroids have aspherical Salvetti complexes by Theorem~\ref{thm:SSOMKpi1}.

\begin{figure}
	
	\begin{tikzpicture}[scale=1]
		\draw (3.872983346207417,-1.) -- (-3.872983346207417,-1.);  % H_1 
		%			\node at (-4.0792156108742281,-1.) {\small $1$}; 
		\draw (3.872983346207417,1.) -- (-3.872983346207417,1.);  % H_2 
		%			\node at (-4.0792156108742281,1.) {\small $2$}; 
		\draw (-3.2838821814150108,2.2838821814150108) -- (2.2838821814150108,-3.2838821814150108);  % H_3 
		%			\node at (2.4274562336608891,-3.4274562336608891) {\small $3$}; 
		\draw (-3.5777087639996634,1.7888543819998317) -- (3.5777087639996634,-1.7888543819998317);  % H_4 
		%			\node at (3.7565942021996466,-1.8782971010998233) {\small $4$}; 
		\draw (2.2838821814150108,3.2838821814150108) -- (-3.2838821814150108,-2.2838821814150108);  % H_5 
		%			\node at (-3.4274562336608891,-2.4274562336608891) {\small $5$}; 
		\draw (-2.2838821814150108,3.2838821814150108) -- (3.2838821814150108,-2.2838821814150108);  % H_6 
		%			\node at (3.4274562336608891,-2.4274562336608891) {\small $6$}; 
		\draw (3.5777087639996634,1.7888543819998317) -- (-3.5777087639996634,-1.7888543819998317);  % H_7 
		%			\node at (-3.7565942021996466,-1.8782971010998233) {\small $7$}; 
		\draw (3.2838821814150108,2.2838821814150108) -- (-2.2838821814150108,-3.2838821814150108);  % H_8 
		%			\node at (-2.4274562336608891,-3.4274562336608891) {\small $8$}; 
		\draw (4.,0.) -- (0.5,0);
		%			\draw (0.5,0) -- (0,0.5) -- (-0.5,0);
		\draw (0.5,0) .. controls (0,0.32) .. (-0.5,0);
		\draw (-0.5,0) -- (-4.,0.);  % H_9 
		%			\node at (-4.2000000000000002,0.) {\small $9$}; 
		
		\draw[dashed] (0.,4.) -- (0.,-4.);  % H_10 
		%			\node at (0.,-4.2000000000000002) {\small $10$}; 
		\draw[dashed] (1.3776388834631177,3.7552777669262354) -- (-2.1776388834631173,-3.355277766926235);  % H_11 
		%			\node at (-2.2676188047886008,-3.5352376095772007) {\small $11$}; 
		\draw[dashed] (-1.3776388834631177,3.7552777669262354) -- (2.1776388834631173,-3.355277766926235);  % H_12 
		%			\node at (2.2676188047886008,-3.5352376095772007) {\small $12$}; 
		
		\draw[dashed] (0.96095202129184909,3.8828560638755474) -- (-1.5609520212918493,-3.6828560638755472);  % H_13 
		%			\node at (-1.6243866504914646,-3.8731599514743928) {\small $13$}; 
		
		\draw[dashed] (-0.96095202129184909,3.8828560638755474) -- (1.5609520212918493,-3.6828560638755472);  % H_13 
		%			\node at (-1.6243866504914646,-3.8731599514743928) {\small $13$}; 

		\draw (0.0,-1.) circle[radius=2pt];  % P[ 1, 3, 8 ] 
%			\fill[red] (-2.,1.) circle[radius=2pt];  % P[ 2, 3, 4 ] 
%			\fill[red] (2.,-1.) circle[radius=2pt];  % P[ 1, 4, 6 ] 
%			\fill[red] (-2.,-1.) circle[radius=2pt];  % P[ 1, 5, 7 ] 
		\fill (0.0,1.) circle[radius=2pt];  % P[ 2, 5, 6 ] 
		%			\fill[red] (-1.,0.0) circle[radius=2pt];  % P[ 3, 5, 9 ] 
		%			\fill[red] (-0.66666666666666663,0.33333333333333331) circle[radius=2pt];  % P[ 4, 5 ] 
		%			\fill[red] (2.,1.) circle[radius=2pt];  % P[ 2, 7, 8 ] 
		\draw (-0.66666666666666663,-0.33333333333333331) circle[radius=2pt];  % P[ 3, 7 ] 
		\draw (0.0,0.0) circle[radius=2pt];  % P[ 4, 7, 9 ] 
		%			\fill[red] (0.66666666666666663,0.33333333333333331) circle[radius=2pt];  % P[ 6, 7 ] 
		\draw (0.66666666666666663,-0.33333333333333331) circle[radius=2pt];  % P[ 4, 8 ] 
		%			\fill[red] (1.,0.0) circle[radius=2pt];  % P[ 6, 8, 9 ] 
		\draw (-0.28,0.14) circle[radius=2pt];
		\draw (0.28,0.14) circle[radius=2pt];
		
%		\node at (-0.3,1.2) {\small{$X$}};
	\end{tikzpicture}

	\caption{A projective picture of an arrangement of pseudolines representing a (reorienteation class of a) non-realizable oriented matroid 
		and a supersolvable extension.}
	\label{fig:NonPappusSSOM}
\end{figure}
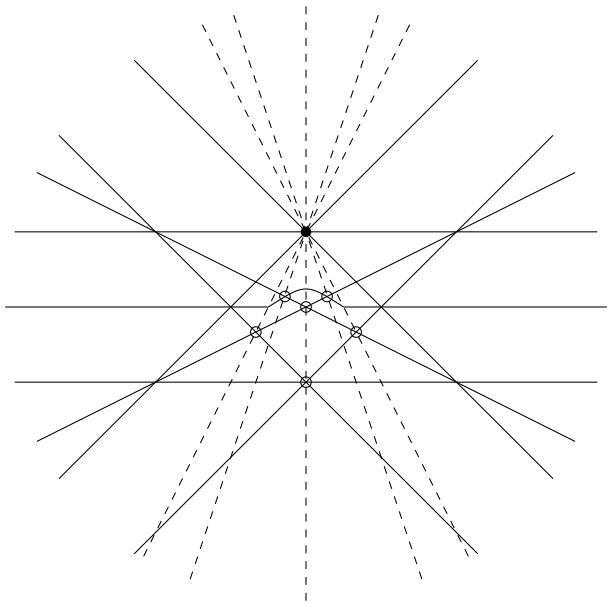

We conclude with a simple example of a non-realizable supersolvable oriented matroid
of rank $3$. For a precise definition of an \emph{arrangement of pseudolines} 
-- a nice visual way to represent (reorientation classes of) rank $3$ oriented matroids, we refer to \cite[Ch.~6]{BLSWZ1999_OrientedMatroids}.
\begin{example}
	\label{ex:NonRealSSOM}
	Let $\OM$ be the oriented matroid of rank $3$
	associated to some orientation of the pseudoline arrangement,
	illustrated as a projective picture in Figure~\ref{fig:NonPappusSSOM}.
	By Pappus' theorem, the oriented matroid is not realizable by an arrangement of linear hyperplanes.
	
	By Proposition~\ref{prop:OMSSExtension}, $\OM$ can be extended to a supersolvable
	oriented matroid $\wt{\OM}$. A possible extension is illustrated by the dotted lines in Figure~\ref{fig:NonPappusSSOM}.
	The intersection point marked by a solid circle represents a modular flat $X$ of corank one. It is connected to
	every other intersection point of pseudolines in the arrangement by some line passing through $X$.
	The resulting supersolvable oriented matroid $\wt{\OM}$ is evidently not realizable.
\end{example}

%%%%%%%%%%%%%%%%%%%%%%%%%%%%%%%%%%%%%%%%%%%%%%%%%%%%%%%%%%%%%%%%%%%%%%%%%%%%%%%%%%%%%%%%%%%%%%%%%%%%
%%%%%%%%%%%%%%%%%%%%%%%%%%%%%%%%%%%%%%%%%%%%%%%%%%%%%%%%%%%%%%%%%%%%%%%%%%%%%%%%%%%%%%%%%%%%%%%%%%%%

\section*{Acknowledgments}
The author would like to thank Emanuele Delucchi and Masahiko Yoshinaga for helpful discussions.
He would also like to thank Gerhard R\"ohrle for useful remarks on an earlier draft of the manuscript.

The author is grateful to the JSPS for financial support during a JSPS
fellowship for foreign researchers in Japan.

%%%%%%%%%%%%%%%%%%%%%%%%%%%%%%%%%%%%%%%%%%%%%%%%%%%%%%%%%%%%%%%%%%%%%%%%%%%%%%%%%%%%%%%
%%%%%%%%%%%%%%%%%%%%%%%%%%%%%%%%%%%%%%%%%%%%%%%%%%%%%%%%%%%%%%%%%%%%%%%%%%%%%%%%%%%%%%

\newcommand{\etalchar}[1]{$^{#1}$}
\providecommand{\bysame}{\leavevmode\hbox to3em{\hrulefill}\thinspace}
\providecommand{\MR}{\relax\ifhmode\unskip\space\fi MR }
% \MRhref is called by the amsart/book/proc definition of \MR.
\providecommand{\MRhref}[2]{%
	\href{http://www.ams.org/mathscinet-getitem?mr=#1}{#2}
}
\providecommand{\href}[2]{#2}

%%%%%%%%%%%%%%%%%%%%%%%%%%%%%%%%%%%%%%%%%%%%%%%%%%%%%%%%%%%%%%%%%%%%%%%%%%%%%%%%%%%%%%%
%%%%%%%%%%%%%%%%%%%%%%%%%%%%%%%%%%%%%%%%%%%%%%%%%%%%%%%%%%%%%%%%%%%%%%%%%%%%%%%%%%%%%%

\end{document}